\newif\ifPreprint \Preprinttrue
\newif\ifSubmission \Submissionfalse
\patchcmd{\@settitle}{\uppercasenonmath\@title}{\scshape\large}{}{}
\patchcmd{\@setauthors}{\MakeUppercase}{\scshape\normalsize}{}{}
\newcommand{\define}{\mathrel{{\mathop:}{=}}}
\newcommand{\set}[1]{\{#1\}}
\newcommand{\Defset}[3][\defsep]{\Set{#2#1#3}}
\newcommand{\st}{\text{s.t.}}
\newcommand{\field}{\mathbb}
\newcommand{\naturals}{\field{N}}
\newcommand{\reals}{\field{R}}
\newcommand{\N}{\naturals}
\newcommand{\R}{\reals}
\newcommand{\Card}{\Abs}
\newcommand{\fcdot}{\,\cdot\,}
\newcommand{\fcarg}[1]{\def\fc@rg{#1}\ifx\fc@rg\empty\fcdot\else\fc@rg\fi}
\newcommand{\Abs}[1]{\left\lvert#1\right\rvert}
\newtheorem{lemma}{Lemma}
\newtheorem{remark}{Remark}
\definecolor{myblue}{HTML}{4575b4}
\definecolor{myorange}{HTML}{fc8d59}
\definecolor{myred}{HTML}{d7191c}
\newcommand{\blue}[1]{\textcolor{myblue}{#1}}
\newcommand{\orange}[1]{\textcolor{myorange}{#1}}
\newcommand{\red}[1]{\textcolor{myred}{#1}}
\newcommand{\rev}[1]{#1}
\begin{document}

\title[Solving Decision-Dependent Robust Problems as Bilevel
Problems]{A Computational Study for Solving Decision-Dependent Robust
  Problems as Bilevel Optimization Problems}

\author[H. Lefebvre, M. Schmidt, S. Stevens, J. Thürauf]%
{Henri Lefebvre, Martin Schmidt, Simon Stevens, Johannes Thürauf}

\address[M. Schmidt, S. Stevens]{%
  Trier University,
  Department of Mathematics,
  Universitätsring 15,
  54296 Trier,
  Germany}
\email{martin.schmidt@uni-trier.de}
\email{stevens@uni-trier.de}

\address[H. Lefebvre]{%
  LIRMM, University of Montpellier, CNRS, Montpellier, France
}
\email{henri.lefebvre@cnrs.fr}

\address[J. Thürauf]{%
  University of Technology Nuremberg (UTN),
  Department Liberal Arts and Social Sciences,
  Discrete Optimization,
  Dr.-Luise-Herzberg-Str. 4,
  90461 Nuremberg,
  Germany}
\email{johannes.thuerauf@utn.de}

\date{\today}

\begin{abstract}
  Both bilevel and robust optimization are established fields of mathematical
optimization and operations research. However, only until recently, the
similarities in their mathematical structure has neither been studied
theoretically nor exploited computationally. Based on the recent results by
\textcite{goerigk2025}, this paper is the first one that provides an
extensive computational study for solving strictly robust optimization
problems with decision-dependent uncertainty sets as equivalent
bilevel optimization problems. If the uncertainty
set can be dualized, the respective bilevel techniques to obtain a single-level
reformulation are very similar compared with the classic dualization techniques
used in robust optimization but lead to larger single-level problems to be
solved. Our numerical study shows that this usually leads to larger
computation times. For the more challenging case
of decision-dependent uncertainty sets represented by mixed-integer linear
models, one cannot apply classic dualization techniques from robust
optimization. Thus, we compare the
presented bilevel approach with an established method from the literature,
which is based on quantified mixed-integer linear programs. Our numerical
results indicate that, for the problem class of decision-dependent robust
optimization problems with mixed-integer linear uncertainty sets, the
bilevel approach performs better in terms of computation times.

\end{abstract}

\keywords{Robust optimization,
Decision-dependent uncertainty sets,
Endogenous uncertainty,
Bilevel optimization%
}
\subjclass[2020]{90C70, %
90C17, %
91A65%
}

\maketitle

\section{Introduction}
\label{sec:introduction}

Bilevel optimization deals with models of hierarchical decision making
in which a so-called leader acts first, while anticipating the optimal
reaction of the so-called follower, whose decision depends on the one
of the leader.
For a general overview of bilevel optimization we refer to
\textcite{Dempe:2002,Dempe-et-al:2015} and the more recent surveys by
\textcite{kleinert2021,Beck_et_al:2023}.
Hence, the overall structure is that of a nested optimization problem.
In robust optimization, one only considers a single-decision maker but
explicitly takes into consideration that this agent has to make a
decision under uncertainty.
In classic robust optimization, the decision is taken before the
uncertainty realizes and this uncertainty is assumed to realize in the
worst-case sense, which again is represented by a nested optimization
problem \parencite{soyster1973,Ben-Tal_et_al:2009}.
Hence, both types of problems---although being introduced to model
completely different aspects of real-world decision making---exhibit a
rather similar mathematical structure.

To the best of our knowledge, the first publication in which this
similarity has been observed is the one by \textcite{Stein:2013} in
the context of (generalized) semi-infinite optimization.
Nevertheless, the literature on robust and bilevel optimization has
been rather disjoint.
Besides some comments in this direction by
\textcite{Leyffer_et_al:2020}, the first systematic study of the
similarities and the differences of the mathematical structure of
bilevel and robust optimization has been recently published by
\textcite{goerigk2025}.
In particular, the authors show the equivalence of certain classes of
robust and bilevel optimization problems, paving the way for using the
computational methods from one field to solve problems from the other one.
In this paper, we will exploit the particular equivalence between
bilevel optimization and decision-dependent robust optimization to
solve the latter, and present the first extensive computational study
for this approach.

One of the main concerns in robust optimization is to reduce the
conservatism that is inherent to robust counterparts and their
solutions.
Many developments have thus focused on less conservative
approaches for modeling the uncertainty sets, including polyhedral,
ellipsoidal \parencite{ben1999} or budgeted \parencite{bertsimas2003}
uncertainty sets.
These approaches often provide a more realistic representation of
uncertainties, thereby resulting in less conservative solutions.
In addition to traditional robust optimization, adaptive robust
optimization and decision-dependent robust optimization (DDRO)
have gained more attention in recent years.
The former incorporates dynamic decision-making that adapts as
uncertainties unfold, typically modeled using a multi-stage structure
as in \textcite{bertsimas2011} and as in the survey by~\textcite{Hertog2019}.
This approach may significantly reduce the conservatism of the model.
DDRO addresses situations in which uncertainties
are modeled as being dependent on the decision variables,
enabling some control over these uncertainties within the model.

Decision-dependent uncertainties are sometimes also called endogenous
uncertainties and were initially introduced in the context of
stochastic optimization problems; see, e.g.,
\textcite{jonsbraaten1998}.
Since then, several papers on this topic have been
published in the field of stochastic optimization
\parencite{zhan2016,apap2017,hellemo2018,motamed2021}, to
name just a few.

Within robust optimization, however, the field of decision-dependent uncertainty
is less mature. Notable methodological contributions include the work of
\textcite{nohadani2018}, who consider shortest-path problems with
decision-dependent uncertainties in the arc lengths. They also show that even
linear DDRO problems with polyhedral uncertainty sets and affine
decision-dependence are NP-complete in general. Similarly,
\textcite{poss2013,poss2014} examine combinatorial optimization problems such as
the knapsack problem under budgeted uncertainty sets as well as under knapsack
uncertainties. \rev{More recently, \textcite{ley2026} study decision-dependent
robust optimization problems in which uncertainty sets can be modified
elementwise at a cost.} Furthermore, decision-dependent robust optimization is
also of interest in many application areas such as software partitioning
\parencite{spacey2012}, scheduling \parencites{lappas2016,vujanic2016}, energy
networks~\parencite{aigner2022}, or health care~\parencite{zhu2022}. Finally, in
recent years, decision-dependent uncertainties have also been explored within
more specialized fields of robust optimization, such as two-stage and multistage
robust optimization \parencites{zhang2020,
avraamidou2020,zeng2022,chen2026decision} or distributionally robust
optimization
\parencites{luo2020,feng2021,basciftci2021,yu2022,doan2022,ryu2025}.

To the best of our knowledge, most of the techniques to solve
single-level DDRO problems rely on dualization or problem-specific knowledge
to reformulate the robust optimization problem as a finite-dimensional problem
consisting of finitely many variables and constraints. Consequently, these
approaches are generally limited to cases with continuous and convex
uncertainty sets that can be dualized, i.e., for which a strong-duality
theorem is available. If such a theorem is not available, e.g., in cases in
which the decision-dependent uncertainty set is represented as a mixed-integer
linear model, the only well-maintained and publicly available code to
solve the respective decision-dependent robust problems we are aware
of is the \textsf{Yasol} solver for quantified mixed-integer
problems \parencite{yasol2017,hartisch_lorenz:2022}.
More recently, new approaches have also been
introduced by~\textcite{lozano2025}.
In particular, the authors present a column-and-constraint generation
algorithm and a method based on decision diagrams.

We use the recent result by
\textcite{goerigk2025} showing that decision-dependent robust optimization
problems can be equivalently reformulated as bilevel optimization problems.
Our main contribution is to present the first extensive computational
study that illustrates the potential of solving DDRO problems as
bilevel optimization problems.
To this end, we consider classic operations-research problems that
have already been studied in the literature on DDRO: the shortest-path
problem, also considered by \textcite{nohadani2018} in the DDRO
setting, the knapsack problem, also considered by
\textcite{poss2013,poss2014} in DDRO, and the portfolio optimization
problem in its DDRO version.
For the case of continuous and convex decision-dependent uncertainty
sets, we derive the single-level reformulation of the corresponding
bilevel problem, solve it, and compare it with solving the given
problem using classic dualization techniques from robust optimization
and standard linearization techniques à la \textcite{mccormick1976}.
Our numerical results reveal that the bilevel problem usually leads to
larger computation times due to larger model sizes.
Second, we consider the first two of the three mentioned
problems but with decision-dependent uncertainty sets for which classic
dualization techniques are not applicable. However, we use the most recent
advances in mixed-integer linear bilevel optimization and solve the
corresponding bilevel problem using the publicly available open-source solver
\textsf{MibS} \parencite{mibs,DeNegre-Ralphs:2009,Tahernejad_Ralphs:2020}.
This enables us to present numerical results for the considered class of
problems and to compare them with the results obtained by using the \textsf{Yasol}
solver.
The results demonstrate that using \textsf{MibS} for the bilevel
reformulation consistently performs better than applying \textsf{Yasol} with
respect to computation times.

The remainder of this paper is structured as follows.
In Section~\ref{sec:general-model}, we present the DDRO models that we
discuss in this paper and review the classic robust reformulation as
well as the respective bilevel reformulations based on strong-duality
or Karush--Kuhn--Tucker conditions.
Afterward, in Section~\ref{sec:uncertainty-sets} we introduce the
uncertainty sets that we use later in our numerical study:
decision-dependent budgeted uncertainty and decision-dependent
knapsack uncertainty---both in a continuous and a mixed-integer
variant.
Section~\ref{sec:applications} then discusses the three mentioned
applications together with their uncertainty models.
Then, we present and discuss our numerical results in
Section~\ref{sec:num-results} before we conclude the paper
with some comments on future research questions in
Section~\ref{sec:conclusion}.

\section{Reformulations of a General DDRO Problem}
\label{sec:general-model}

In this section, we present equivalent reformulations of general
decision-dependent robust optimization problems. First, we will discuss the
classic robust reformulation that is commonly used in the literature in
Section~\ref{sec:general-model:classic-robust-reformulation}. Then, we derive a
bilevel reformulation in Section~\ref{sec:general-model:bilevel-reformulation}
together with its single-level reformulations using strong
duality~(Section~\ref{sec:general-model:bilevel-reformulation:sd}) and
Karush--Kuhn--Tucker (KKT)
conditions~(Section~\ref{sec:general-model:bilevel-reformulation:kkt}).

To this end, we consider the general DDRO problem
\begin{subequations}
  \label{eq:general-model}
  \begin{align}
    \min_{x \in X} \quad
    & c^\top x
    \\
    \st \quad
    &a_i^\top x + u_i^\top B_i x \leq \beta_i
      \quad
      \forall u_i \in U_i(x),
      \quad
      i \in [m_x],
      \label{eq:general-model:uncertain-constraint}
  \end{align}
\end{subequations}
where $X$ is a non-empty and compact set that may contain integrality
conditions for some or all $x$-variables.
\rev{More specifically, we will later suppose that \hbox{$X \subseteq
  \set{0,1}^{k_x} \times \R^{n_x - k_x}$} holds.}
Here, $c \in \R^{n_x}$, $a_i \in \R^{n_x}$, $B_i \in \R^{n_i \times
  n_x}$, $\beta_i \in \R, \ i \in [m_x]$, are given data.
The uncertainty sets are supposed to be polytopes, possibly
intersected with some integer lattice, i.e., we have
\begin{equation}
  \label{eq:general-model:uncertainty-set}
  U_i(x) \define
  \Defset{u_i \in \set{0,1}^{k_i} \times \R^{n_i - k_i}}
  {C_i x + D_i u_i \leq \alpha_i},
  \quad
  i \in [m_x],
\end{equation}
with given data $C_i \in \R^{m_i \times n_x}$, $D_i \in \R^{m_i \times n_i}$,
as well as $\alpha_i \in \R^{m_i}$ and where the binary assumption is
again w.l.o.g.

\begin{remark}
  \label{rem:general-model:wlog-uncertain-constraints}
  Note that it is w.l.o.g.\ to assume that the uncertainty only
  appears in the constraints since any uncertainty in the objective function can
  be moved to the constraints by introducing an auxiliary epigraph variable.
\end{remark}

\begin{remark}
  \rev{We note that assuming the integer part of~$X$ to be binary, as we later
  do, is mathematically equivalent to considering bounded integers, since
  bounded integer variables can always be represented by binary
  variables.
  However, such reformulations usually lead to significantly larger
  models so that, computationally speaking, it might make a big
  difference if one considers binary variables or bounded integers.}
\end{remark}

\subsection{Classic Robust Reformulation}
\label{sec:general-model:classic-robust-reformulation}

In this section, we present the classic robust reformulation of Problem
\eqref{eq:general-model} that is commonly used in the literature.
Thus, we first re-write the uncertain
constraints~\eqref{eq:general-model:uncertain-constraint} as
\begin{equation}
  \label{eq:general-model:classic-robust-reformulation:uncertain-constraint}
  a_i^\top x + \max_{u_i \in U_i(x)} u_i^\top B_i x
  \leq \beta_i,
  \quad
  i \in [m_x],
\end{equation}
which is well-posed due to the non-emptiness and compactness assumption for the
uncertainty sets. This is valid since if
Constraints~\eqref{eq:general-model:uncertain-constraint} have to hold for all
$u_i \in U_i(x)$, it is equivalent to requiring that they hold for the
worst-case. If $k_i = 0$ holds, i.e., $U_{i}(x)$ is polyhedral, we can dualize
the inner maximization problems
in~\eqref{eq:general-model:classic-robust-reformulation:uncertain-constraint} to
obtain the dual problems
\begin{equation}
  \label{eq:general-model:classic-robust-reformulation:dual-inner-problem}
  \begin{aligned}
    \min_{\lambda_i} \quad
    &(\alpha_i - C_i x)^\top \lambda_i
    \\
    \st \quad
    &D_i^\top \lambda_i = B_i x,
    \\
    &\lambda_i \geq 0
  \end{aligned}
\end{equation}
for all $i \in [m_x]$.
By strong duality, which is ensured to hold under the stated assumptions,
the dual problems~\eqref{eq:general-model:classic-robust-reformulation:dual-inner-problem}
have the same optimal objective value as the primal problems. In the following,
we write $u = (u_i)_{i\in[m_x]}$ and $\lambda = (\lambda_i)_{i\in[m_x]}$
for the collections of all primal and dual variables, respectively.

We can now replace the inner maximization problems by their duals to obtain the
classic robust reformulation of Problem \eqref{eq:general-model}:
\begin{subequations}
  \label{eq:general-model:classic-robust-reformulation}
  \begin{align}
    \max_{x \in X, \lambda} \quad
    &c^\top x
    \\
    \st \quad
    &a_i^\top x + (\alpha_i - C_i x)^\top \lambda_i
      \leq \beta_i,
      \quad
      i \in [m_x],
      \label{eq:general-model:classic-robust-reformulation:reformulated-uncertain-constraint}
    \\
    &D_i^\top \lambda_i = B_i x,
      \quad
      i \in [m_x],
    \\
    &\lambda_i \geq 0,
      \quad
      i \in [m_x].
  \end{align}
\end{subequations}
Using McCormick envelopes~\parencite{mccormick1976}, the bilinear terms $x^\top
C_i^\top \lambda_i$ in
Constraints~\eqref{eq:general-model:classic-robust-reformulation:reformulated-uncertain-constraint}
can be linearized if the variables $x$ and $\lambda_i$ are bounded. Assuming
that~$X$ is described by linear constraints and integrality restrictions only,
this leads to an equivalent mixed-integer linear programming (MILP)
reformulation of Problem~\eqref{eq:general-model:classic-robust-reformulation},
provided that the variables~$x$ are integer. In contrast, for robust
optimization problems with decision-independent uncertainty, these bilinear
terms do not appear and the robust reformulation is directly given as an MILP.

\subsection{Bilevel Reformulation}
\label{sec:general-model:bilevel-reformulation}

Following the recent results by \textcite{goerigk2025}, we can also reformulate
Problem~\eqref{eq:general-model} as the bilevel optimization problem
\begin{equation}
  \label{eq:general-model:bilevel-reformulation}
  \begin{aligned}
    \min_{x \in X, u} \quad
    &c^\top x
    \\
    \st \quad
    &a_i^\top x + u_i^\top B_i x
      \leq \beta_i,
      \quad
      i \in [m_x],
    \\
    &u \in S(x),
  \end{aligned}
\end{equation}
where $S(x)$ is the set of optimal solutions to the $x$-parameterized
lower-level problem
\begin{equation}
  \label{eq:general-model:bilevel-reformulation:lower-level}
  \begin{aligned}
    \max_{u} \quad
    &\sum_{i\in[m_x]} u_i^\top B_i x
    \\
    \st \quad
    &C_i x + D_i u_i \leq \alpha_i,
      \quad
      i \in [m_x].
  \end{aligned}
\end{equation}
Here, the lower-level problem models the worst case scenario over the
uncertainty sets $U_i(x)$ defined in \eqref{eq:general-model:uncertainty-set}.
Note that we do not need the uncertainty sets to be given as continuous
polyhedra to use this reformulation, i.e., we can also allow for $k_i > 0$.
\begin{remark}
  \label{rem:general-model:bilevel-reformulation:solvability}
  Assume that $X$ is given by linear constraints and integrality
  restrictions only.
  Moreover assume that the bilinearities $u_i^\top B_i x$ can be
  linearized.
  Then, the bilevel
  problem~\eqref{eq:general-model:bilevel-reformulation} is solvable
  with off-the-shelf mixed-integer bilevel optimization (MIBLP)
  solvers like
  \textnormal{\textsf{MibS}~\parencite{mibs}}. Results on this will be shown in
  Section~\ref{sec:num-results}.
\end{remark}
If the uncertainty sets $U_i(x)$ are continuous, i.e., if $k_i = 0$ holds for
all $i \in [m_x]$, we can reformulate the bilevel problem
\eqref{eq:general-model:bilevel-reformulation} as a single-level optimization
problem using the strong-duality or KKT conditions of the lower-level
problem\rev{; see \textcite{Fortuny-Amat-McCarl:1981,Zare_et_al:2019} or
the recent book by \textcite{Beck_Ljubic_Schmidt:2026} for more
information about these single-level reformulations.}
This is described in the following section.

\subsubsection{Strong-Duality-Based Single-Level Reformulation}
\label{sec:general-model:bilevel-reformulation:sd}

In the following, we assume that the uncertainty sets $U_i(x)$ are
given by continuous variables only, i.e., $k_i = 0$ holds for all $i
\in [m_x]$.
Then, we can dualize the linear lower-level
problem~\eqref{eq:general-model:bilevel-reformulation:lower-level} and obtain
the dual problem
\begin{equation}
  \label{eq:general-model:bilevel-reformulation:sd:dual-ll}
  \begin{aligned}
    \min_{\lambda} \quad
    &\sum_{i\in[m_x]} (\alpha_i - C_ix)^\top \lambda_i
    \\
    \st \quad
    &D_i^\top \lambda_i = B_i x,
      \quad
      i \in [m_x],
    \\
    &\lambda_i \geq 0,
      \quad
      i \in [m_x].
  \end{aligned}
\end{equation}
Strong duality holds if and only if
\begin{equation*}
  \label{eq:general-model:bilevel-reformulation:sd:strong-duality}
  \sum_{i\in[m_x]} u_i^\top B_i x
  \geq \sum_{i\in[m_x]} (\alpha_i^\top - C_i x)^\top \lambda_i,
\end{equation*}
where $u$ and $\lambda$ are solutions for the primal lower-level
problem~\eqref{eq:general-model:bilevel-reformulation:lower-level} and the dual
problem~\eqref{eq:general-model:bilevel-reformulation:sd:dual-ll}, respectively.
Using this, we can reformulate the bilevel
problem~\eqref{eq:general-model:bilevel-reformulation} as the single-level
mixed-integer optimization problem
\begin{subequations}
  \label{eq:general-model:bilevel-reformulation:sd:single-level}
  \begin{align}
    \min_{x \in X, u, \lambda} \quad
    &c^\top x
    \\
    \st \quad
    &a_i^\top x + u_i^\top B_i x
      \leq \beta_i,
      \quad
      i \in [m_x],
    \\
    &C_i x + D_i u_i \leq \alpha_i,
      \quad
      i \in [m_x],
      \label{eq:general-model:bilevel-reformulation:sd:single-level:primal-ll}
    \\
    &D_i^\top \lambda_i = B_i x,
      \quad
      i \in [m_x],
    \\
    &\lambda_i \geq 0,
      \quad
      i \in [m_x],
    \\
    &\sum_{i\in[m_x]} u_i^\top B_i x
      \geq \sum_{i\in[m_x]} (\alpha_i - C_i x)^\top \lambda_i.
      \label{eq:general-model:bilevel-reformulation:sd:single-level:sd-constraint}
  \end{align}
\end{subequations}
Note that this is the same problem as the classic robust
reformulation~\eqref{eq:general-model:classic-robust-reformulation} with the
addition of the strong duality
constraint~\eqref{eq:general-model:bilevel-reformulation:sd:single-level:sd-constraint}
as well as the primal lower-level variables $u$ and
constraints~\eqref{eq:general-model:bilevel-reformulation:sd:single-level:primal-ll}.

\subsubsection{KKT-Based Single-Level Reformulation}
\label{sec:general-model:bilevel-reformulation:kkt}

We now also present the single-level reformulation of the bilevel
problem~\eqref{eq:general-model:bilevel-reformulation} based on the KKT
conditions of the lower-level
problem~\eqref{eq:general-model:bilevel-reformulation:lower-level}. The KKT
conditions of the lower-level
problem~\eqref{eq:general-model:bilevel-reformulation:lower-level} are given as
\begin{equation*}
  \label{eq:general-model:bilevel-reformulation:kkt:kkt-conditions}
  \begin{aligned}
    C_i x + D_i u_i \leq \alpha_i,
    & \quad
      i \in [m_x],
    \\
    D_i^\top \lambda_i = B_i x,
    & \quad
      i \in [m_x],
    \\
    \lambda_i \geq 0,
    & \quad
      i \in [m_x],
    \\
    \lambda_i^\top (\alpha_i - C_i x - D_i u_i) = 0,
    & \quad
      i \in [m_x].
  \end{aligned}
\end{equation*}
Since the lower-level
problem~\eqref{eq:general-model:bilevel-reformulation:lower-level} is linear,
the KKT conditions are necessary and sufficient for optimality. Using these, we
can reformulate the bilevel
problem~\eqref{eq:general-model:bilevel-reformulation} as the single-level
optimization problem
\begin{subequations}
  \label{eq:general-model:bilevel-reformulation:kkt:single-level}
  \begin{align}
    \min_{x \in X, u, \lambda} \quad
    &c^\top x
    \\
    \st \quad
    &a_i^\top x + u_i^\top B_i x
      \leq \beta_i,
      \quad
      i \in [m_x],
    \\
    &C_i x + D_i u_i \leq \alpha_i,
      \quad
      i \in [m_x],
    \\
    &D_i^\top \lambda_i = B_i x,
      \quad
      i \in [m_x],
    \\
    &\lambda_i \geq 0,
      \quad
      i \in [m_x],
    \\
    &\lambda_i^\top (\alpha_i - C_i x - D_i u_i) = 0,
      \quad
      i \in [m_x].
      \label{eq:general-model:bilevel-reformulation:kkt:single-level:complementarity}
  \end{align}
\end{subequations}
Note that this is equivalent to the strong-duality-based reformulation
\eqref{eq:general-model:bilevel-reformulation:sd:single-level} with the
replacement of the strong-duality constraint
\eqref{eq:general-model:bilevel-reformulation:sd:single-level:sd-constraint} by
the complementarity-slackness
conditions~\eqref{eq:general-model:bilevel-reformulation:kkt:single-level:complementarity}.
The bilinearities in both single-level reformulations based on bilevel
optimization can be linearized using McCormick envelopes if the variables~$x$,
$u$, and $\lambda$ are bounded, analogously to the classic robust
reformulation~\eqref{eq:general-model:classic-robust-reformulation}.

\begin{remark}
  \label{rem:general-model:bilevel-reformulation:kkt:general-uncertainty}
  Note that the uncertainty sets $U_i(x)$ in
  \eqref{eq:general-model:uncertainty-set} can be more general sets as
  long as the lower-level
  problem~\eqref{eq:general-model:bilevel-reformulation:lower-level}
  allows for applying a strong-duality theorem.
  This, in general, is possible for convex sets that satisfy Slater's
  constraint qualification so that the KKT conditions are
  necessary and sufficient.
\end{remark}

\section{Uncertainty Sets}
\label{sec:uncertainty-sets}

In this section, we present the two most commonly used uncertainty sets known
from the literature on robust optimization with decision-dependent uncertainty
that we also consider in the computational experiments in
Section~\ref{sec:num-results}.
\rev{General introductions to uncertainty sets in robust optimization
  can, e.g., be found in the books by
  \textcite{ben2009,Goerigk_Hartisch:2024}.}
We also derive bounds on the dual
variables for each uncertainty set that can be used for linearization in the
resulting single-level reformulations.

We start with decision-dependent budgeted uncertainty sets in
Section~\ref{sec:uncertainty-sets:budgeted}, followed by decision-dependent
knapsack uncertainty sets in Section~\ref{sec:uncertainty-sets:knapsack}.

\subsection{Decision-Dependent Budgeted Uncertainty}
\label{sec:uncertainty-sets:budgeted}

The first type of uncertainty sets that we consider are decision-dependent budgeted
uncertainty sets known from, e.g., \textcite{nohadani2018}. The intuition behind
budgeted uncertainty sets is that not all uncertain parameters deviate from
their nominal values at the same time. Instead, only a limited number of
parameters can take their worst-case values simultaneously. This limitation is
modeled by an uncertainty budget. Via hedging variables, the decision-maker can
reduce the uncertainty in each parameter individually by paying additional
hedging costs.

We consider decision-dependent budgeted uncertainty sets in their continuous
form
\begin{equation*} \label{eq:uncertainty-sets:budgeted:continuous}
  U^{\text{cb}}(x) \define \Defset{u \in \R^n}
  {\sum_{i = 1}^n u_i \leq \Gamma,
    \ u_i \leq 1-\gamma_i x_i,
    \ u_i \geq 0, \ i \in [n]},
\end{equation*}
where $\Gamma \geq 0$ is the uncertainty budget and $\gamma \in [0,1]^{n}$ is
the vector of fractions of uncertainty reduction.

In their discrete form, the decision-dependent budgeted uncertainty sets are
given by
\begin{equation*} \label{eq:uncertainty-sets:budgeted:discrete}
  U^{\text{db}}(x) \define \Defset{u \in \set{0,1}^{n}}
  {\sum_{i = 1}^n u_i \leq \Gamma,
    \ u_i \leq 1 - x_i, \ i \in [n]}.
\end{equation*}
Note that in this case, the fraction of uncertainty reduction $\gamma$ is
assumed to be equal to 1 for all parameters since the uncertainty parameters are
binary. Thus, we are only able to fully protect an uncertain parameter or leave
it unprotected.

To linearize the bilinearities in the single-level reformulations derived in
Section~\ref{sec:general-model}, we need upper bounds on the dual variables of
the inner maximization problems. The dual problem of
$\max_{u} \Defset{u^\top B x}{u \in U^{\text{cb}}(x)}$
is given by
\begin{subequations}
  \label{eq:uncertainty-sets:budgeted:dual}
  \begin{align}
    \min_{\pi, \lambda} \quad & \Gamma \pi + \sum_{i \in [n]} \lambda_i (1 - \gamma_i x_i)
    \\
    \st \quad & \pi + \lambda_i \geq (B x)_i, \quad i \in [n],
    \label{eq:uncertainty-sets:budgeted:dual:constr}
    \\
    & \pi, \lambda_i \geq 0, \quad i \in [n].
  \end{align}
\end{subequations}
Constraints~\eqref{eq:uncertainty-sets:budgeted:dual:constr} hold for all $\pi
\geq \max_i (B x)_i$ and $\lambda_i \geq 0$. Thus, a valid upper bound on $\pi$
is given by
\begin{equation*}
  \overline{\pi} = \max \Defset{ (B x)_i }{ i \in [n] }.
\end{equation*}
Moreover, with the same type of argument,
a valid upper bound on $\lambda_i$ is given by
\begin{equation*}
  \overline{\lambda}_i = (B x)_i, \quad i \in [n].
\end{equation*}

\subsection{Decision-Dependent Knapsack Uncertainty}
\label{sec:uncertainty-sets:knapsack}

Another type of uncertainty set that we consider is the decision-dependent knapsack
uncertainty set. Here, the decision-dependence lies in the uncertainty budget
$b(x)$ instead of the fraction of uncertainty reduction. The uncertainty budget
is defined as a function of $x$ given by $b(x) = b + w^\top x$ with $w \in
\R^n_{\geq 0}$ and $b \in \R_{\geq 0}$. Whenever we consider this uncertainty,
we further assume that $b(x) \geq 0$ for all $x \in X$. Moreover, the
constraints in the uncertainty set are knapsack constraints with
weights $f_i \geq 0$. The decision-dependent knapsack uncertainty set is
therefore given by
\begin{equation*} \label{eq:uncertainty-sets:cont-knapsack}
  U^{\text{ck}}(x) \define \Defset{u \in [0,1]^n}
  {f^\top u \leq b + w^\top x}.
\end{equation*}
Here, the decision-maker influences the overall uncertainty budget but cannot
hedge against uncertainty in individual components. This uncertainty set was
introduced by~\textcite{poss2013} who highlights that, for given binary
variables~$x$, sparse vectors are inherently more protected from the uncertainty
than dense ones. The uncertainty budget itself grows with $\lVert x \rVert$, so
that more activated components of $x$ expose the model to more uncertainty.  The
discrete version of this uncertainty set is given by
\begin{equation*} \label{eq:uncertainty-sets:disc-knapsack}
  U^{\text{dk}}(x) \define U^{\text{ck}}(x) \cap \set{0,1}^{n}.
\end{equation*}

Similarly to the previous section, we need upper bounds on the dual variables of
the inner maximization problems to linearize the bilinearities in the
single-level reformulations derived in Section~\ref{sec:general-model}. The dual
problem of
$\max_{u} \Defset{u^\top B x}{u \in U^{\text{ck}}(x)}$
is given by
\begin{subequations}
  \label{eq:uncertainty-sets:knapsack:dual}
  \begin{align}
    \min_{\pi} \quad & \pi (b + w^\top x) + \sum_{i \in [n]} \lambda_i
    \\
    \st \quad & \pi f_i + \lambda_i \geq (B x)_i, \quad i \in [n],
    \label{eq:uncertainty-sets:knapsack:dual:const}
    \\
    & \pi, \lambda_i \geq 0, \quad i \in [n].
  \end{align}
\end{subequations}
Assume that $f_i > 0$ for at least one $i \in [n]$.
Otherwise, the knapsack constraints in $U^{\text{ck}}(x)$ are
redundant.
Moreover, note that $b + w^\top x \geq 0$ holds by assumption.
Thus, a valid upper bound on $\pi$ is given by
\begin{equation*}
  \overline{\pi} = \max \Defset{ \frac{(B x)_i}{f_i} }{ i \in [n] },
\end{equation*}
since Constraints~\eqref{eq:uncertainty-sets:knapsack:dual:const} hold for all
$\pi \geq \bar{\pi}$. Moreover, with the same argument, a valid upper bound on
$\lambda_i$ is again given by
\begin{equation*}
  \overline{\lambda}_i = (B x)_i, \quad i \in [n].
\end{equation*}

\section{Applications}
\label{sec:applications}

In this section, we present three applications from the literature that can be
modeled as decision-dependent robust optimization problems. For each
application, we describe the nominal problem and derive the corresponding robust
problems. We start with the shortest-path problem in
Section~\ref{sec:applications:shortest-path}, followed by the knapsack problem
in Section~\ref{sec:applications:knapsack}, and end with the portfolio
optimization problem in Section~\ref{sec:applications:portfolio}.

\subsection{Shortest-Path Problem}
\label{sec:applications:shortest-path}

To model the nominal shortest-path problem, we consider a directed graph~$G =
(V,A)$, where $V$ is the set of all nodes and $A$ is the set of all arcs. The
objective is to find a shortest path from a given source node~$s \in V$ to a
target node~$t \in V$ with $t \neq s$. For an arc~$a \in A$, the variable~$y_a$
models whether the arc~$a$ is to be chosen in the shortest path and is
therefore binary. The nominal costs for using arc~$a \in A$ are given by $d_a
\geq 0$. The only constraints of the model are the flow~conservation~constraints
\begin{equation}
  \label{eq:applications:shortest-path:flow-conservation}
  \sum_{a \in \delta^{\text{in}}(v)} y_a - \sum_{a
    \in \delta^{\text{out}}(v)} y_a =
  \begin{cases}
    1, \ &v = t, \\\
    -1, \ &v = s, \\
    0, \ &\text{else},
  \end{cases}
  \ v \in V,
\end{equation}
in which we denote the ingoing and outgoing arcs of the node~$v\in V$ by the
sets $\delta^{\text{in}}(v) \define \Defset{(u,v) \in A}{u \in V \setminus
\set{v}}$ and $\delta^{\text{out}}(v) \define \Defset{(v,u) \in A}{u \in V
\setminus \set{v}}$. Since we minimize the costs, we obtain the nominal model
\begin{equation}
  \label{eq:applications:shortest-path:nominal-model}
  \min_{y \in \set{0,1}^{\Card{A}}} \quad
  \sum_{a \in A} d_a y_a \quad \st
  \quad~\eqref{eq:applications:shortest-path:flow-conservation}.
\end{equation}

\subsubsection{Budgeted Uncertainty}
\label{sec:applications:shortest-path:budgeted-uncertainty}

In the following, we assume decision-dependent uncertainties in the costs $d_a$.
To this end, we introduce the nominal cost $\bar{d}_a \geq 0$ on arc $a$ and the
maximum deviation of the cost $\hat{d}_a \geq 0$. Reducing the uncertainty for
arc $a$ by, e.g., investing in development of the road network, leads to hedging
costs represented by $h_a \geq 0$. The new variable~$x_a$ models the decision
whether to reduce the uncertainty on arc~$a$ or not. The objective function in
Problem~\eqref{eq:applications:shortest-path:nominal-model} is therefore
modified to obtain the robust shortest-path problem
\begin{equation}
  \label{eq:applications:shortest-path:cont-dd-rob-model}
  \begin{aligned}
    \min_{x,y} \quad
    &\sum_{a \in A} h_a x_a
      + \sum_{a \in A} \bar{d}_a y_a
      + \max_{u \in U_{\text{sp}}(x)} \
      \sum_{a \in A} u_a\hat{d}_a y_a
    \\
    \st \quad &\eqref{eq:applications:shortest-path:flow-conservation}, \
                x, y \in \set{0,1}^{\Card{A}}
  \end{aligned}
\end{equation}
with the decision-dependent budgeted uncertainty sets $U_{\text{sp}} =
U^{\text{cb}}$ or $U_{\text{sp}} = U^{\text{db}}$ as defined in
Section~\ref{sec:uncertainty-sets:budgeted}.

\subsubsection{Knapsack Uncertainty}
\label{sec:applications:shortest-path:knapsack-uncertainty}

We also consider decision-dependent uncertainties in the costs $d_a$ in form of
a knapsack uncertainty set. In contrast to the budgeted uncertainty set, the
knapsack uncertainty set is parameterized by the decision variables directly.
The uncertainty budget $b(y)$ grows with the number of chosen arcs. We obtain
the robust shortest-path problem
\begin{equation}
  \label{eq:applications:shortest-path:cont-dd-rob-model:knapsack}
  \begin{aligned}
    \min_{y} \quad
    &\sum_{a \in A} \bar{d}_a y_a
      + \max_{u \in U_{\text{sp}}(y)} \
      \sum_{a \in A} u_a \hat{d}_a y_a
    \\
    \st \quad &\eqref{eq:applications:shortest-path:flow-conservation}, \
                y \in \set{0,1}^{\Card{A}}
  \end{aligned}
\end{equation}
with the decision-dependent knapsack uncertainty sets $U_{\text{sp}} =
U^{\text{ck}}$ or $U_{\text{sp}} = U^{\text{dk}}$ as defined in
Section~\ref{sec:uncertainty-sets:knapsack}.

\subsection{Knapsack Problem}
\label{sec:applications:knapsack}

Consider a set of $n$~items, each with value~$c_i \geq 0$ and weight~$a_i \geq
0$, to be given. The goal is to choose a subset of these items so that the
available capacity~$d \geq 0$ of the knapsack is not exceeded and the overall
value is maximized. This results in the nominal knapsack~problem
\begin{equation*}
  \label{eq:applications:knapsack:nominal-model}
  \max_{y \in \set{0,1}^n} \quad
  c^\top y \quad \st \quad
  a^\top y \leq d.
\end{equation*}

\subsubsection{Budgeted Uncertainty}
\label{sec:applications:knapsack:budgeted-uncertainty}

We now consider decision-dependent uncertainties in the weights $a_i$ of the
items. To this end, we introduce the nominal weight $\bar{a}_i \geq 0$ of item
$i$ and the maximum deviation~$\hat{a}_i \geq 0$ of the weight. Reducing the
uncertainty for item~$i$ by, e.g., investing in better packaging, leads to
hedging costs represented by $h_i \geq 0$. The new variable~$x_i$ models the
decision whether to reduce the uncertainty on item~$i$ or not. Thus, the robust
knapsack problem with decision-dependent budgeted uncertainty is given by
\begin{equation}
  \label{eq:applications:knapsack:budgeted-dd-rob-model:budgeted}
  \begin{aligned}
    \max_{x,y} \quad
    &c^\top y - h^\top x
    \\
    \st \quad
    &\sum_{i \in [n]} \bar{a}_i y_i
      + \max_{u \in U_\text{k}(x)} \
      \sum_{i \in [n]} u_i \hat{a}_i y_i \leq d,
    \\
    &x, y \in \set{0,1}^n,
  \end{aligned}
\end{equation}
with the decision-dependent budgeted uncertainty sets $U_\text{k} =
U^{\text{cb}}$ or $U_\text{k} = U^{\text{db}}$ as defined in
Section~\ref{sec:uncertainty-sets:budgeted}.

\subsubsection{Knapsack Uncertainty}
\label{sec:applications:knapsack:knapsack-uncertainty}

We also consider decision-dependent uncertainties in the weights~$a_i$ in form
of a continuous knapsack uncertainty set. Similar to the shortest-path problem,
the uncertainty budget $b(y)$ grows with the number of chosen items.
We obtain the uncertain knapsack problem
\begin{equation}
  \label{eq:applications:knapsack:cont-dd-rob-model:knapsack}
  \begin{aligned}
    \max_{y \in \set{0,1}^n} \quad
    &c^\top y \quad
    \\
    \st \quad
    &\sum_{i \in [n]} \bar{a}_i y_i
      + \max_{u \in U_\text{k}(y)} \
      \sum_{i \in [n]} u_i \hat{a}_i y_i \leq d,
  \end{aligned}
\end{equation}
with the decision-dependent knapsack uncertainty sets $U_\text{k} =
U^{\text{ck}}$ or $U_\text{k} = U^{\text{dk}}$ as defined in
Section~\ref{sec:uncertainty-sets:knapsack}.

\subsection{Portfolio Optimization}
\label{sec:applications:portfolio}

We consider the cardinality-constrained portfolio
optimization problem. In this setting, investors allocate their budget
across \hbox{$N \in \N$}~different assets by assigning weights~$y_i \in [0,1]$ to each
asset~$i \in [N]$. These weights have to add up to one, i.e., the entire budget
has to be invested. We do not allow negative weights, thereby prohibiting short
positions. The expected return of asset $i$ is given by $\mu_i \in \R$ and
the covariance matrix is given by $\Sigma \in \R^{N \times N}$, which is
symmetric and positive semi-definite.

The goal is now to maximize the expected return of the portfolio, while the
variance of the portfolio does not exceed a certain value~$V_0$. To ensure that
only a reasonable amount of different assets is chosen, we introduce a
cardinality constraint according to \textcite{jin2016} of the form
\begin{equation*}
  ||y||_0 = \Card{\Defset{i \in [N]}{y_i > 0}} \leq k,
\end{equation*}
in which $||y||_0$ represents the number of entries of $y$ that are non-zero.
This number is bounded from above by a fixed number $k \in N$ with $1 \leq k
\leq N$. Consequently, only up to $k$ assets can be included in the portfolio.
We can reformulate this constraint using binary variables~$s_i$ ($i\in[N]$) and obtain the
nominal cardinality-constrained portfolio~optimization~problem
\begin{subequations}
\label{eq:applications:portfolio:nominal-model}
  \begin{align}
    \max_{y, s} \quad &\mu^\top y
    \\
    \st \quad &y^\top \Sigma y \leq V_0,
    \quad \sum_{i \in [N]} y_i = 1, \label{eq:applications:portfolio:nominal-model:con1}
    \\
    &\sum_{i \in [N]} s_i \leq k,
    \quad y_i \leq s_i,
    \quad i \in [N], \label{eq:applications:portfolio:nominal-model:con2}
    \\
    &s \in \set{0,1}^N,
    \ y \geq 0. \label{eq:applications:portfolio:nominal-model:con3}
  \end{align}
\end{subequations}

\begin{remark}
  Note that Problem~\eqref{eq:applications:portfolio:nominal-model} is a
  mixed-integer quadratic optimization problem due to
  Constraint~\eqref{eq:applications:portfolio:nominal-model:con1}. Thus, the
  bilevel reformulation derived in
  Section~\ref{sec:general-model:bilevel-reformulation} becomes a mixed-integer
  quadratic bilevel problem, which cannot be handled by mixed-integer
  linear bilevel optimization solvers like
  \textnormal{\textsf{MibS}}. Consequently, we cannot present numerical results
  for this problem with discrete uncertainty sets in
  Section~\ref{sec:num-results}.
\end{remark}

\subsubsection{Budgeted Uncertainty}
\label{sec:applications:portfolio:budgeted-uncertainty}

We introduce budgeted uncertainties in the expected returns $\mu$ that depend on
a second vector of decision-variables~\mbox{$x \in [0,1]^N$}, which is, in
contrast to the previous sections, continuous. The variable~$x_i$ models the
decision whether and to what extent to hedge, i.e., to insure oneself, against
the uncertainty in the expected return of asset~$i$, e.g., by acquiring
additional information about the asset.

For this, we modify the objective function of
Problem~\eqref{eq:applications:portfolio:nominal-model} to include the costs for
hedging~$h^\top x$ as well as the uncertainties. We obtain the uncertain
portfolio selection problem
\begin{equation}
  \label{eq:applications:portfolio:cont-dd-rob-model}
  \begin{aligned}
    \max_{y,s,x} \quad &\bar{\mu}^\top y - h^\top x -
    \max_{u \in U_\text{p}(x)} \
    \sum_{i \in [N]} u_i \hat{\mu}_i y_i
    \\
    \st \quad & \eqref{eq:applications:portfolio:nominal-model:con1}\text{--}
    \eqref{eq:applications:portfolio:nominal-model:con3},
  \end{aligned}
\end{equation}
with the continuous decision-dependent budgeted uncertainty set $U_\text{p} =
U^{\text{cb}}$ defined in
Section~\ref{sec:uncertainty-sets:budgeted}.
If we choose $x_i \in (0,1]$, we
hedge against the uncertainty in the expected returns for asset~$i$. For
$x_i=0$, we do not hedge against the uncertainties for asset~$i$ at all.

\subsubsection{Knapsack Uncertainty}
\label{sec:applications:portfolio:knapsack-uncertainty}

We also consider decision-dependent uncertainties in the expected returns
$\mu_i$ in form of a continuous knapsack uncertainty set. In this case, the
uncertainty budget $b(s)$ grows with the number of chosen assets. We obtain the
uncertain portfolio selection problem
\begin{equation}
  \label{eq:applications:portfolio:cont-dd-rob-model:knapsack}
  \begin{aligned}
    \max_{y,s} \quad &\bar{\mu}^\top y -
    \max_{u \in U_\text{p}(s)} \
    \sum_{i \in [N]}   u_i \hat{\mu}_i y_i
    \\
    \st \quad & \eqref{eq:applications:portfolio:nominal-model:con1}\text{--}
    \eqref{eq:applications:portfolio:nominal-model:con3},
  \end{aligned}
\end{equation}
with the continuous decision-dependent knapsack uncertainty set $U_\text{p} =
U^{\text{ck}}$ defined in
Section~\ref{sec:uncertainty-sets:knapsack}.

\section{Computational Study}
\label{sec:num-results}

In this computational study, we consider the decision-dependent robust versions
of the shortest-path problem, the knapsack problem, and the portfolio selection
problem as described in the previous section. We start with continuous
uncertainty sets from Section~\ref{sec:num-results:cont}. The respective problems
can be tackled both by the classic (duality-based) robust approach stated in
Problem~\eqref{eq:general-model:classic-robust-reformulation} and by the
single-level reformulations of the bilevel reformulation stated in
Problems~\eqref{eq:general-model:bilevel-reformulation:sd:single-level}
and~\eqref{eq:general-model:bilevel-reformulation:kkt:single-level}.
We consider the continuous budgeted uncertainty set from
Section~\ref{sec:num-results:cont:budgeted} and the continuous knapsack
uncertainty set from Section~\ref{sec:num-results:cont:knapsack}. The goal is to
compare the computational performance of robust and bilevel approaches derived
from Section~\ref{sec:general-model} w.r.t.\ runtimes and branch-and-bound nodes.
We omit the results for the KKT-based single-level
reformulation~\eqref{eq:general-model:bilevel-reformulation:kkt:single-level}
because
preliminary experiments revealed that it is not competitive with the
duality-based approach. In the second part of the computational study, we consider
the discrete uncertainty sets, namely
the discrete budgeted uncertainty set from
Section~\ref{sec:num-results:discrete:budgeted} and the discrete knapsack
uncertainty set from Section~\ref{sec:num-results:discrete:knapsack}. Here, we
compare the results for the bilevel reformulation solved with the mixed-integer
linear bilevel solver \textsf{MibS} \parencite{mibs} and the results obtained
using the quantified integer programming solver~\textsf{Yasol}
\parencite{yasol2017}. \textsf{MibS} solves the bilevel
reformulation~\eqref{eq:general-model:bilevel-reformulation} while
\textsf{Yasol} solves the resulting quantified integer programs (QIPs).
For details on QIPs in the context of decision-dependent robust optimization and
the two different formulations considered in this computational study, we refer
to the Appendix~\ref{sec:qip-models}.

Our open-source implementation and all instances used in this computational
study are publicly available
at~\url{https://github.com/simstevens/ddro-via-bilevel}.

\subsection{Hardware and Software Setup}
\label{sec:num-results:setup}
All numerical experiments were conducted on a single core Intel
Xeon Gold 6126 at \SI{2.6}{\giga\hertz} with \SI{64}{\giga\byte} RAM and with
a time limit of \SI{2}{\hour}.

The single-level reformulations of the models with continuous uncertainty sets in
Section~\ref{sec:num-results:cont} were implemented in \textsf{Python}~3.12.2
using \textsf{Gurobi}~11.0.3 through its \textsf{gurobipy} interface.
The bilevel reformulations of the models with discrete uncertainty sets in
Section~\ref{sec:num-results:discrete} were solved with the mixed-integer linear
bilevel solver~\textsf{MibS}~1.2.2, which internally uses the mixed-integer
linear optimization solver \textsf{CPLEX}~22.1.1. We compare the results of the
bilevel approach with those obtained solving the QIP formulations with the
\textsf{Yasol} solver in version~4.0.1.5, which also uses \textsf{CPLEX}~22.1.1
as the underlying LP solver.

\subsection{Instance Generation}
\label{sec:num-results:instance-gen}

In the following, we describe the instance generation procedure for the three
considered applications. The data for the knapsack uncertainty sets is generated
with the knapsack instance generator~\textsf{gen2} from~\textcite{pisinger1999}
with coefficients in the range $[1, 100]$. The constant part of the uncertainty
budget $b$ is then given as the knapsack budget, $f$ is given as the knapsack
weights and $w$ is chosen uniformly at random from $[0, b/n]$ to ensure that
$b(x) \geq 0$ for all $x$. %

\rev{For the budgeted uncertainty sets, we identified application-specific
parameter configurations in preliminary experiments, under which hedging
decisions are typically made. The specific parameters are described in the
respective sections below. For more details on the impact of the choice of
hedging costs, we refer to Section~\ref{sec:num-results:hedging-costs}.}

\subsubsection{Shortest-Path Problem}
\label{sec:num-results:instance-gen:shortest-path}

The shortest-path instances are generated according to the procedure described
in \textcite{nohadani2018}. To this end, the number of nodes is taken as input.
For continuous uncertainty sets, we consider instances with a number of nodes
ranging from 75 to 300 with a step size of 25. For the discrete uncertainty
sets, we consider instances with a number of nodes ranging from \rev{2 to 20
with a step size of 2}. For each instance size, 10 instances are generated to
obtain 100~instances in total for each uncertainty set.

Initially, each node is uniformly assigned to a point on a $100\times 100$
grid to define travel costs between two nodes by their Euclidean distance. Then,
the source and the target nodes are chosen to be the two furthest nodes in the
respective complete graph. To avoid direct connections between the source and
the target, we then remove $60\,\%$ of the longest arcs in the cases of
continuous uncertainty sets. Hence, we obtain a graph $G = (V,A)$ with $|A| =
|V|\times(|V|-1)\times 0.4$ arcs. Since the graphs for the discrete uncertainty
sets are significantly smaller, we only remove $20\,\%$ of the longest arcs to
ensure that the graph is still connected. Finally, the maximum deviation of the
travel costs $\hat d_a$ is set to \rev{50\,\% of} the nominal value $\bar{d_a}$.

When considering Problem~\eqref{eq:applications:shortest-path:cont-dd-rob-model}
with budgeted uncertainty sets, the hedging costs are assumed to be the same for
all arcs and are fixed to $h_a = \rev{4}$. We use an uncertainty budget $\Gamma$
equal to \rev{1\,\% of the total number of arcs for the continuous case and
10\,\% for the discrete case}, while the fraction of uncertainty
reduction~$\gamma_a$ is set to $0.2$ for all arcs $a$. \rev{Note that, in
comparison to \textcite{nohadani2018}, we used different parameters than the
original ones to avoid a test set containing many instances without any
hedging decisions in the optimal solutions.}

The data for
Problem~\eqref{eq:applications:shortest-path:cont-dd-rob-model:knapsack} with a
knapsack uncertainty set is generated with the knapsack instance
generator~\textsf{gen2} as stated at the beginning of
Section~\ref{sec:num-results:instance-gen}. For the continuous version of the
uncertainty sets, we use strongly correlated weights and a capacity of $0.35\,
W_\text{sum}$, where $W_\text{sum}$ is the sum of all weights. For the discrete
version, we use uncorrelated weights and a capacity of $0.1\, W_\text{sum}$.

\subsubsection{Knapsack Problem}
\label{sec:num-results:instance-gen:knapsack}

The knapsack instances are generated again using the~\textsf{gen2} instance generator
with weights ranging from 1 to 100. In case of
continuous uncertainty sets, the capacity is set to $0.35\, W_\text{sum}$ and the
profits are strongly correlated with the weights. For discrete uncertainty sets,
we use uncorrelated weights and a capacity of $0.1\, W_\text{sum}$. The maximum
deviation of the weights $\hat{a}_i$ is set to $10$\,\% of the nominal value
$\bar{a}_i$ for all items~$i$.

In Figure~\ref{fig:num-results:instance-gen:knapsack:heatmap}, a heatmap putting
in relation the number of items, the knapsack's capacity, and the computation
time for the knapsack problems with a continuous knapsack uncertainty set is
presented. From these preliminary experiments, we see that both approaches
struggle to solve the same class of instances. In particular, instances with a
capacity which is around 50\,\% of the sum of the weights seem to be the most
challenging ones. This fact was already stated by \textcite{pisinger2005hard}
for general knapsack problems. Obviously, larger instances also tend to be more
difficult to solve than smaller ones. Thus, a capacity of $0.35\, W_\text{sum}$
yields harder instances than a capacity of $0.1\, W_\text{sum}$.

\begin{figure}
  \centering
  \makebox[\textwidth][c]{%
    \scalebox{1.0}{\input{plots/knapsack_cont_capacities.tex}}%
  } \caption{Heatmap of the computation times depending on the knapsack's
    capacity for the knapsack problem with a continuous knapsack uncertainty set.}
  \label{fig:num-results:instance-gen:knapsack:heatmap}
\end{figure}

When considering
Problem~\eqref{eq:applications:knapsack:budgeted-dd-rob-model:budgeted} with
budgeted uncertainty sets, the hedging costs~$h_i$ are \rev{set to $1/1000$} for
all items $i$. The uncertainty budget $\Gamma$ is set to $\rev{0.1} n$, while
the fraction of uncertainty reduction $\gamma_i$ is set to $0.2$ for all
items~$i$.

The data for the knapsack uncertainty set in
Problem~\eqref{eq:applications:knapsack:cont-dd-rob-model:knapsack} is generated
similarly to that for the shortest-path problem, as described above.

For the continuous \rev{knapsack} uncertainty set, we consider instances with a
number of items~$n$ ranging from $1000$ to $10\,000$ with a step size of $1000$,
\rev{while we consider instances with a number of items~$n$ ranging from $100$
to $1000$ with a step size of $100$ for the continuous budgeted uncertainty set.}
For the discrete \rev{knapsack} uncertainty set, we consider instances with a
number of items $n$ ranging from~$40$ to~$130$ with a step size of $10$ \rev{and
for the discrete budgeted uncertainty set the number of items ranges from $10$
to $100$.} For each instance size, 10~instances are generated to obtain
100~instances in total for each test set.

\subsubsection{Portfolio Selection Problem}
\label{sec:num-results:instance-gen:portfolio-selection}

For the portfolio selection instances, the covariance matrix~$\Sigma$ and the
vector of nominal expected returns~$\bar\mu$ are generated using the instance
generator from~\textcite{pardalos1990} with default settings. The maximum
variance~$V_0$ is uniformly drawn from~$[\sigma_{\text{mean}},
\sigma_{\text{max}}]$ with $\sigma_{\text{mean}}$ being the arithmetic mean of
all entries in~$\Sigma$ and $\sigma_{\text{max}}$ being the maximum entry
of~$\Sigma$. The cardinality parameter~$k$ is set to $\rev{50}$.
We consider instances with $N$ assets for $N \in \set{50, 100, \dotsc,
500}$. For each instance size, we generate 10~instances resulting in
100~instances for each uncertainty set.

The budgeted uncertainty sets for
Problem~\eqref{eq:applications:portfolio:cont-dd-rob-model} are defined by
setting $\Gamma$ to \rev{10\,\%} of the number of assets and $\gamma_i = 0.2$
for all assets~$i$. The maximum return deviation~$\hat{\mu}_i$ \rev{is set to
10\,\% of the nominal value $\bar{\mu}_i$} while the hedging cost~$h_i$ is
\rev{set to $1/1000$} for all assets~$i$.

The data for the knapsack uncertainty set in
Problem~\eqref{eq:applications:portfolio:cont-dd-rob-model:knapsack} is
again generated with~\textsf{gen2} with strongly correlated weights
and a capacity of $0.35\, W_\text{sum}$.

Note that we cannot present results for the
portfolio selection problem with discrete uncertainty sets since neither
\textsf{MibS} nor \textsf{Yasol} are able to solve problems with quadratic
constraints.

\rev{%
\subsection{Impact of Hedging Costs}
\label{sec:num-results:hedging-costs}
For the budgeted uncertainty sets, the choice of the hedging costs $h$ is
essential. Indeed, a too small (resp.\ too high) cost would make the hedging
decision trivially equal to $1$ (resp.\ $0$). We therefore present some
illustrative results on the shortest-path problem with continuous budgeted
uncertainty sets to demonstrate how the choice of $h$ affects both solutions and
computation times.\footnote{The experiments in
this subsection were conducted on a different machine than the remaining
experiments, namely an Intel Xeon E5-2640 v3 with \SI{2.6}{\giga\hertz}. All other
settings are as described in Section~\ref{sec:num-results:setup}. Because these
are mainly qualitative results, this difference in the hardware setup
does not affect comparability.} To do so, we consider the instances from
Section~\ref{sec:num-results:instance-gen:shortest-path} with varying hedging
costs $h \in \{1, 2, 3, 4, 5\}$.
\begin{figure}
  \centering
  \makebox[\textwidth][c]{%
    \scalebox{1.0}{\input{plots/optl_rev/sp_hedging_solutions_scatter.tex}}%
  } \caption{Scatter plot for the shortest-path problem with continuous budgeted
  uncertainty sets. The $x$-axis shows the hedging costs~$h$ and the $y$-axis shows
  the fraction of arcs in the optimal shortest path that are hedged.}
  \label{fig:num-results:instance-gen:hedging:scatter}
\end{figure}
Figure~\ref{fig:num-results:instance-gen:hedging:scatter} shows the fraction of
arcs in the optimal shortest path that are hedged for different hedging costs
$h$, i.e.,
\begin{equation*}
\frac{\text{\# arcs with hedging}}{\text{\# arcs in shortest path}}.
\end{equation*}
We observe that the number of hedging decisions decreases with increasing
hedging costs. For $h=1$, almost all arcs in the optimal shortest path are
hedged, while for $h=5$, no arcs are hedged at all. We illustrate the
corresponding impact on the optimal objective function values in
Figure~\ref{fig:num-results:hedging:objective:scatter} in
Appendix~\ref{sec:appendix:plots}. We see that the optimal objective function
value deteriorates with increasing hedging costs because fewer arcs are hedged
and thus the uncertainty set becomes larger; see also \textcite{nohadani2018}.
}

\rev{
\begin{figure}
  \centering
  \makebox[\textwidth][c]{%
    \scalebox{1.0}{\begin{tikzpicture}[x=1pt,y=1pt]
\definecolor{fillColor}{RGB}{255,255,255}
\path[use as bounding box,fill=fillColor,fill opacity=0.00] (35,0) rectangle (108.54,144.54);
\begin{scope}
\path[clip] ( 38.33, 32.16) rectangle (139.54,139.54);
\definecolor{drawColor}{gray}{0.92}

\path[draw=drawColor,line width= 0.3pt,line join=round] ( 38.33, 49.25) --
	(139.54, 49.25);

\path[draw=drawColor,line width= 0.3pt,line join=round] ( 38.33, 73.65) --
	(139.54, 73.65);

\path[draw=drawColor,line width= 0.3pt,line join=round] ( 38.33, 98.05) --
	(139.54, 98.05);

\path[draw=drawColor,line width= 0.3pt,line join=round] ( 38.33,122.46) --
	(139.54,122.46);

\path[draw=drawColor,line width= 0.3pt,line join=round] (119.60, 32.16) --
	(119.60,139.54);

\path[draw=drawColor,line width= 0.3pt,line join=round] ( 94.05, 32.16) --
	( 94.05,139.54);

\path[draw=drawColor,line width= 0.3pt,line join=round] ( 68.49, 32.16) --
	( 68.49,139.54);

\path[draw=drawColor,line width= 0.5pt,line join=round] ( 38.33, 37.04) --
	(139.54, 37.04);

\path[draw=drawColor,line width= 0.5pt,line join=round] ( 38.33, 61.45) --
	(139.54, 61.45);

\path[draw=drawColor,line width= 0.5pt,line join=round] ( 38.33, 85.85) --
	(139.54, 85.85);

\path[draw=drawColor,line width= 0.5pt,line join=round] ( 38.33,110.26) --
	(139.54,110.26);

\path[draw=drawColor,line width= 0.5pt,line join=round] ( 38.33,134.66) --
	(139.54,134.66);

\path[draw=drawColor,line width= 0.5pt,line join=round] (132.38, 32.16) --
	(132.38,139.54);

\path[draw=drawColor,line width= 0.5pt,line join=round] (106.82, 32.16) --
	(106.82,139.54);

\path[draw=drawColor,line width= 0.5pt,line join=round] ( 81.27, 32.16) --
	( 81.27,139.54);

\path[draw=drawColor,line width= 0.5pt,line join=round] ( 55.71, 32.16) --
	( 55.71,139.54);
\definecolor{drawColor}{RGB}{69,117,180}

\path[draw=drawColor,line width= 1.4pt,line join=round] ( 43.26, 38.02) --
	( 43.39, 38.02) --
	( 43.39, 39.00) --
	( 43.45, 39.00) --
	( 43.45, 39.97) --
	( 43.80, 39.97) --
	( 43.80, 40.95) --
	( 43.86, 40.95) --
	( 43.86, 41.92) --
	( 44.13, 41.92) --
	( 44.13, 42.90) --
	( 44.14, 42.90) --
	( 44.14, 43.88) --
	( 44.33, 43.88) --
	( 44.33, 44.85) --
	( 44.51, 44.85) --
	( 44.51, 45.83) --
	( 44.70, 45.83) --
	( 44.70, 46.80) --
	( 44.99, 46.80) --
	( 44.99, 47.78) --
	( 45.94, 47.78) --
	( 45.94, 48.76) --
	( 47.54, 48.76) --
	( 47.54, 49.73) --
	( 48.27, 49.73) --
	( 48.27, 50.71) --
	( 48.32, 50.71) --
	( 48.32, 51.69) --
	( 48.74, 51.69) --
	( 48.74, 52.66) --
	( 50.14, 52.66) --
	( 50.14, 53.64) --
	( 51.35, 53.64) --
	( 51.35, 54.61) --
	( 52.89, 54.61) --
	( 52.89, 55.59) --
	( 53.33, 55.59) --
	( 53.33, 56.57) --
	( 53.36, 56.57) --
	( 53.36, 57.54) --
	( 54.84, 57.54) --
	( 54.84, 58.52) --
	( 56.43, 58.52) --
	( 56.43, 59.49) --
	( 58.33, 59.49) --
	( 58.33, 60.47) --
	( 58.90, 60.47) --
	( 58.90, 61.45) --
	( 61.38, 61.45) --
	( 61.38, 62.42) --
	( 63.59, 62.42) --
	( 63.59, 63.40) --
	( 64.51, 63.40) --
	( 64.51, 64.38) --
	( 65.81, 64.38) --
	( 65.81, 65.35) --
	( 69.97, 65.35) --
	( 69.97, 66.33) --
	( 71.91, 66.33) --
	( 71.91, 67.30) --
	( 72.04, 67.30) --
	( 72.04, 68.28) --
	( 78.66, 68.28) --
	( 78.66, 69.26) --
	( 91.34, 69.26) --
	( 91.34, 70.23) --
	( 99.83, 70.23) --
	( 99.83, 71.21) --
	(105.28, 71.21) --
	(105.28, 72.18) --
	(120.98, 72.18) --
	(120.98, 73.16) --
	(134.94, 73.16) --
	(134.94, 73.16);
\definecolor{drawColor}{RGB}{252,141,89}

\path[draw=drawColor,line width= 1.4pt,dash pattern=on 2pt off 2pt ,line join=round] ( 43.00, 38.02) --
	( 43.00, 38.02) --
	( 43.00, 39.00) --
	( 43.02, 39.00) --
	( 43.02, 39.97) --
	( 43.12, 39.97) --
	( 43.12, 40.95) --
	( 43.17, 40.95) --
	( 43.17, 41.92) --
	( 43.20, 41.92) --
	( 43.20, 42.90) --
	( 43.29, 42.90) --
	( 43.29, 43.88) --
	( 43.30, 43.88) --
	( 43.30, 44.85) --
	( 43.32, 44.85) --
	( 43.32, 45.83) --
	( 43.37, 45.83) --
	( 43.37, 46.80) --
	( 43.49, 46.80) --
	( 43.49, 47.78) --
	( 43.49, 47.78) --
	( 43.49, 48.76) --
	( 43.60, 48.76) --
	( 43.60, 49.73) --
	( 43.74, 49.73) --
	( 43.74, 50.71) --
	( 43.91, 50.71) --
	( 43.91, 51.69) --
	( 43.96, 51.69) --
	( 43.96, 52.66) --
	( 44.05, 52.66) --
	( 44.05, 53.64) --
	( 44.06, 53.64) --
	( 44.06, 54.61) --
	( 44.30, 54.61) --
	( 44.30, 55.59) --
	( 44.39, 55.59) --
	( 44.39, 56.57) --
	( 44.41, 56.57) --
	( 44.41, 57.54) --
	( 45.06, 57.54) --
	( 45.06, 58.52) --
	( 45.07, 58.52) --
	( 45.07, 59.49) --
	( 45.09, 59.49) --
	( 45.09, 60.47) --
	( 45.42, 60.47) --
	( 45.42, 61.45) --
	( 46.75, 61.45) --
	( 46.75, 62.42) --
	( 46.95, 62.42) --
	( 46.95, 63.40) --
	( 47.53, 63.40) --
	( 47.53, 64.38) --
	( 47.58, 64.38) --
	( 47.58, 65.35) --
	( 47.71, 65.35) --
	( 47.71, 66.33) --
	( 47.94, 66.33) --
	( 47.94, 67.30) --
	( 49.21, 67.30) --
	( 49.21, 68.28) --
	( 51.58, 68.28) --
	( 51.58, 69.26) --
	( 56.12, 69.26) --
	( 56.12, 70.23) --
	( 57.93, 70.23) --
	( 57.93, 71.21) --
	( 58.73, 71.21) --
	( 58.73, 72.18) --
	( 62.02, 72.18) --
	( 62.02, 73.16) --
	( 73.97, 73.16) --
	( 73.97, 74.14) --
	( 75.11, 74.14) --
	( 75.11, 75.11) --
	( 85.02, 75.11) --
	( 85.02, 76.09) --
	( 95.42, 76.09) --
	( 95.42, 77.07) --
	( 97.32, 77.07) --
	( 97.32, 78.04) --
	( 97.83, 78.04) --
	( 97.83, 79.02) --
	(100.39, 79.02) --
	(100.39, 79.99) --
	(107.41, 79.99) --
	(107.41, 80.97) --
	(107.63, 80.97) --
	(107.63, 81.95) --
	(110.33, 81.95) --
	(110.33, 82.92) --
	(110.99, 82.92) --
	(110.99, 83.90) --
	(134.94, 83.90) --
	(134.94, 83.90);
\definecolor{drawColor}{RGB}{69,117,180}

\node[text=drawColor,anchor=base west,inner sep=0pt, outer sep=0pt, scale=  1.00] at ( 42.93,120.94) {37/100};
\definecolor{drawColor}{RGB}{252,141,89}

\node[text=drawColor,anchor=base west,inner sep=0pt, outer sep=0pt, scale=  1.00] at ( 42.93,110.61) {48/100};
\end{scope}
\begin{scope}
\path[clip] (  0.00,  0.00) rectangle (144.54,144.54);
\definecolor{drawColor}{gray}{0.30}

\node[text=drawColor,anchor=base east,inner sep=0pt, outer sep=0pt, scale=  1.00] at ( 33.83, 33.60) {0};

\node[text=drawColor,anchor=base east,inner sep=0pt, outer sep=0pt, scale=  1.00] at ( 33.83, 58.00) {25};

\node[text=drawColor,anchor=base east,inner sep=0pt, outer sep=0pt, scale=  1.00] at ( 33.83, 82.41) {50};

\node[text=drawColor,anchor=base east,inner sep=0pt, outer sep=0pt, scale=  1.00] at ( 33.83,106.81) {75};

\node[text=drawColor,anchor=base east,inner sep=0pt, outer sep=0pt, scale=  1.00] at ( 33.83,131.22) {100};
\end{scope}
\begin{scope}
\path[clip] (  0.00,  0.00) rectangle (144.54,144.54);
\definecolor{drawColor}{gray}{0.30}

\node[text=drawColor,anchor=base,inner sep=0pt, outer sep=0pt, scale=  1.00] at (132.38, 20.78) {7000};

\node[text=drawColor,anchor=base,inner sep=0pt, outer sep=0pt, scale=  1.00] at (106.82, 20.78) {5000};

\node[text=drawColor,anchor=base,inner sep=0pt, outer sep=0pt, scale=  1.00] at ( 81.27, 20.78) {3000};

\node[text=drawColor,anchor=base,inner sep=0pt, outer sep=0pt, scale=  1.00] at ( 55.71, 20.78) {1000};
\end{scope}
\begin{scope}
\path[clip] (  0.00,  0.00) rectangle (144.54,144.54);
\definecolor{drawColor}{RGB}{0,0,0}

\node[text=drawColor,rotate= 90.00,anchor=base,inner sep=0pt, outer sep=0pt, scale=  1.00] at ( 15, 85.85) {\# of solved instances};
\end{scope}
\end{tikzpicture}}%
    \scalebox{1.0}{\begin{tikzpicture}[x=1pt,y=1pt]
\definecolor{fillColor}{RGB}{255,255,255}
\path[use as bounding box,fill=fillColor,fill opacity=0.00] (0,0) rectangle (108.54,144.54);
\begin{scope}
\path[clip] ( 38.33, 32.16) rectangle (139.54,139.54);
\definecolor{drawColor}{gray}{0.92}

\path[draw=drawColor,line width= 0.3pt,line join=round] ( 38.33, 49.25) --
	(139.54, 49.25);

\path[draw=drawColor,line width= 0.3pt,line join=round] ( 38.33, 73.65) --
	(139.54, 73.65);

\path[draw=drawColor,line width= 0.3pt,line join=round] ( 38.33, 98.05) --
	(139.54, 98.05);

\path[draw=drawColor,line width= 0.3pt,line join=round] ( 38.33,122.46) --
	(139.54,122.46);

\path[draw=drawColor,line width= 0.3pt,line join=round] (119.60, 32.16) --
	(119.60,139.54);

\path[draw=drawColor,line width= 0.3pt,line join=round] ( 94.05, 32.16) --
	( 94.05,139.54);

\path[draw=drawColor,line width= 0.3pt,line join=round] ( 68.49, 32.16) --
	( 68.49,139.54);

\path[draw=drawColor,line width= 0.5pt,line join=round] ( 38.33, 37.04) --
	(139.54, 37.04);

\path[draw=drawColor,line width= 0.5pt,line join=round] ( 38.33, 61.45) --
	(139.54, 61.45);

\path[draw=drawColor,line width= 0.5pt,line join=round] ( 38.33, 85.85) --
	(139.54, 85.85);

\path[draw=drawColor,line width= 0.5pt,line join=round] ( 38.33,110.26) --
	(139.54,110.26);

\path[draw=drawColor,line width= 0.5pt,line join=round] ( 38.33,134.66) --
	(139.54,134.66);

\path[draw=drawColor,line width= 0.5pt,line join=round] (132.38, 32.16) --
	(132.38,139.54);

\path[draw=drawColor,line width= 0.5pt,line join=round] (106.82, 32.16) --
	(106.82,139.54);

\path[draw=drawColor,line width= 0.5pt,line join=round] ( 81.27, 32.16) --
	( 81.27,139.54);

\path[draw=drawColor,line width= 0.5pt,line join=round] ( 55.71, 32.16) --
	( 55.71,139.54);
\definecolor{drawColor}{RGB}{69,117,180}

\path[draw=drawColor,line width= 1.4pt,line join=round] ( 43.28, 38.02) --
	( 43.45, 38.02) --
	( 43.45, 39.00) --
	( 43.48, 39.00) --
	( 43.48, 39.97) --
	( 44.13, 39.97) --
	( 44.13, 40.95) --
	( 44.15, 40.95) --
	( 44.15, 41.92) --
	( 44.63, 41.92) --
	( 44.63, 42.90) --
	( 44.73, 42.90) --
	( 44.73, 43.88) --
	( 44.93, 43.88) --
	( 44.93, 44.85) --
	( 45.14, 44.85) --
	( 45.14, 45.83) --
	( 45.15, 45.83) --
	( 45.15, 46.80) --
	( 45.43, 46.80) --
	( 45.43, 47.78) --
	( 47.96, 47.78) --
	( 47.96, 48.76) --
	( 48.96, 48.76) --
	( 48.96, 49.73) --
	( 49.54, 49.73) --
	( 49.54, 50.71) --
	( 51.82, 50.71) --
	( 51.82, 51.69) --
	( 51.85, 51.69) --
	( 51.85, 52.66) --
	( 55.87, 52.66) --
	( 55.87, 53.64) --
	( 55.95, 53.64) --
	( 55.95, 54.61) --
	( 56.22, 54.61) --
	( 56.22, 55.59) --
	( 57.14, 55.59) --
	( 57.14, 56.57) --
	( 57.88, 56.57) --
	( 57.88, 57.54) --
	( 57.98, 57.54) --
	( 57.98, 58.52) --
	( 58.86, 58.52) --
	( 58.86, 59.49) --
	( 65.73, 59.49) --
	( 65.73, 60.47) --
	( 71.00, 60.47) --
	( 71.00, 61.45) --
	( 80.48, 61.45) --
	( 80.48, 62.42) --
	( 88.67, 62.42) --
	( 88.67, 63.40) --
	( 89.87, 63.40) --
	( 89.87, 64.38) --
	( 89.89, 64.38) --
	( 89.89, 65.35) --
	( 90.60, 65.35) --
	( 90.60, 66.33) --
	( 91.82, 66.33) --
	( 91.82, 67.30) --
	(104.83, 67.30) --
	(104.83, 68.28) --
	(105.08, 68.28) --
	(105.08, 69.26) --
	(113.32, 69.26) --
	(113.32, 70.23) --
	(115.88, 70.23) --
	(115.88, 71.21) --
	(134.94, 71.21) --
	(134.94, 71.21);
\definecolor{drawColor}{RGB}{252,141,89}

\path[draw=drawColor,line width= 1.4pt,dash pattern=on 2pt off 2pt ,line join=round] ( 43.01, 38.02) --
	( 43.03, 38.02) --
	( 43.03, 39.00) --
	( 43.12, 39.00) --
	( 43.12, 39.97) --
	( 43.13, 39.97) --
	( 43.13, 40.95) --
	( 43.13, 40.95) --
	( 43.13, 41.92) --
	( 43.20, 41.92) --
	( 43.20, 42.90) --
	( 43.22, 42.90) --
	( 43.22, 43.88) --
	( 43.24, 43.88) --
	( 43.24, 44.85) --
	( 43.38, 44.85) --
	( 43.38, 45.83) --
	( 43.39, 45.83) --
	( 43.39, 46.80) --
	( 43.45, 46.80) --
	( 43.45, 47.78) --
	( 43.49, 47.78) --
	( 43.49, 48.76) --
	( 43.57, 48.76) --
	( 43.57, 49.73) --
	( 43.74, 49.73) --
	( 43.74, 50.71) --
	( 44.04, 50.71) --
	( 44.04, 51.69) --
	( 44.12, 51.69) --
	( 44.12, 52.66) --
	( 44.16, 52.66) --
	( 44.16, 53.64) --
	( 44.29, 53.64) --
	( 44.29, 54.61) --
	( 44.66, 54.61) --
	( 44.66, 55.59) --
	( 45.04, 55.59) --
	( 45.04, 56.57) --
	( 45.27, 56.57) --
	( 45.27, 57.54) --
	( 45.35, 57.54) --
	( 45.35, 58.52) --
	( 45.42, 58.52) --
	( 45.42, 59.49) --
	( 46.55, 59.49) --
	( 46.55, 60.47) --
	( 46.78, 60.47) --
	( 46.78, 61.45) --
	( 46.91, 61.45) --
	( 46.91, 62.42) --
	( 47.45, 62.42) --
	( 47.45, 63.40) --
	( 48.01, 63.40) --
	( 48.01, 64.38) --
	( 48.17, 64.38) --
	( 48.17, 65.35) --
	( 50.03, 65.35) --
	( 50.03, 66.33) --
	( 50.49, 66.33) --
	( 50.49, 67.30) --
	( 52.43, 67.30) --
	( 52.43, 68.28) --
	( 52.76, 68.28) --
	( 52.76, 69.26) --
	( 54.90, 69.26) --
	( 54.90, 70.23) --
	( 56.56, 70.23) --
	( 56.56, 71.21) --
	( 56.75, 71.21) --
	( 56.75, 72.18) --
	( 58.52, 72.18) --
	( 58.52, 73.16) --
	( 61.38, 73.16) --
	( 61.38, 74.14) --
	( 65.61, 74.14) --
	( 65.61, 75.11) --
	( 66.59, 75.11) --
	( 66.59, 76.09) --
	( 70.21, 76.09) --
	( 70.21, 77.07) --
	( 77.91, 77.07) --
	( 77.91, 78.04) --
	( 87.60, 78.04) --
	( 87.60, 79.02) --
	( 89.63, 79.02) --
	( 89.63, 79.99) --
	( 98.90, 79.99) --
	( 98.90, 80.97) --
	( 99.74, 80.97) --
	( 99.74, 81.95) --
	(105.55, 81.95) --
	(105.55, 82.92) --
	(106.01, 82.92) --
	(106.01, 83.90) --
	(129.92, 83.90) --
	(129.92, 84.88) --
	(130.89, 84.88) --
	(130.89, 85.85) --
	(132.51, 85.85) --
	(132.51, 86.83) --
	(134.94, 86.83) --
	(134.94, 86.83);
\definecolor{drawColor}{RGB}{69,117,180}

\node[text=drawColor,anchor=base west,inner sep=0pt, outer sep=0pt, scale=  1.00] at ( 40,120) {35/100};
\definecolor{drawColor}{RGB}{252,141,89}

\node[text=drawColor,anchor=base west,inner sep=0pt, outer sep=0pt, scale=  1.00] at ( 40,110) {51/100};
\end{scope}
\begin{scope}
\path[clip] (  0.00,  0.00) rectangle (144.54,144.54);
\definecolor{drawColor}{gray}{0.30}

\node[text=drawColor,anchor=base,inner sep=0pt, outer sep=0pt, scale=  1.00] at (132.38, 20.78) {7000};

\node[text=drawColor,anchor=base,inner sep=0pt, outer sep=0pt, scale=  1.00] at (106.82, 20.78) {5000};

\node[text=drawColor,anchor=base,inner sep=0pt, outer sep=0pt, scale=  1.00] at ( 81.27, 20.78) {3000};

\node[text=drawColor,anchor=base,inner sep=0pt, outer sep=0pt, scale=  1.00] at ( 55.71, 20.78) {1000};
\end{scope}
\begin{scope}
\path[clip] (  0.00,  0.00) rectangle (144.54,144.54);
\definecolor{drawColor}{RGB}{0,0,0}

\node[text=drawColor,anchor=base,inner sep=0pt, outer sep=0pt, scale=  1.00] at ( 88.93,  6.94) {Computation time (s)};
\end{scope}
\end{tikzpicture}}%
    \scalebox{1.0}{\begin{tikzpicture}[x=1pt,y=1pt]
\definecolor{fillColor}{RGB}{255,255,255}
\path[use as bounding box,fill=fillColor,fill opacity=0.00] (0,0) rectangle (108.54,144.54);
\begin{scope}
\path[clip] ( 38.33, 32.16) rectangle (139.54,139.54);
\definecolor{drawColor}{gray}{0.92}

\path[draw=drawColor,line width= 0.3pt,line join=round] ( 38.33, 49.25) --
	(139.54, 49.25);

\path[draw=drawColor,line width= 0.3pt,line join=round] ( 38.33, 73.65) --
	(139.54, 73.65);

\path[draw=drawColor,line width= 0.3pt,line join=round] ( 38.33, 98.05) --
	(139.54, 98.05);

\path[draw=drawColor,line width= 0.3pt,line join=round] ( 38.33,122.46) --
	(139.54,122.46);

\path[draw=drawColor,line width= 0.3pt,line join=round] (119.60, 32.16) --
	(119.60,139.54);

\path[draw=drawColor,line width= 0.3pt,line join=round] ( 94.05, 32.16) --
	( 94.05,139.54);

\path[draw=drawColor,line width= 0.3pt,line join=round] ( 68.49, 32.16) --
	( 68.49,139.54);

\path[draw=drawColor,line width= 0.5pt,line join=round] ( 38.33, 37.04) --
	(139.54, 37.04);

\path[draw=drawColor,line width= 0.5pt,line join=round] ( 38.33, 61.45) --
	(139.54, 61.45);

\path[draw=drawColor,line width= 0.5pt,line join=round] ( 38.33, 85.85) --
	(139.54, 85.85);

\path[draw=drawColor,line width= 0.5pt,line join=round] ( 38.33,110.26) --
	(139.54,110.26);

\path[draw=drawColor,line width= 0.5pt,line join=round] ( 38.33,134.66) --
	(139.54,134.66);

\path[draw=drawColor,line width= 0.5pt,line join=round] (132.38, 32.16) --
	(132.38,139.54);

\path[draw=drawColor,line width= 0.5pt,line join=round] (106.82, 32.16) --
	(106.82,139.54);

\path[draw=drawColor,line width= 0.5pt,line join=round] ( 81.27, 32.16) --
	( 81.27,139.54);

\path[draw=drawColor,line width= 0.5pt,line join=round] ( 55.71, 32.16) --
	( 55.71,139.54);
\definecolor{drawColor}{RGB}{69,117,180}

\path[draw=drawColor,line width= 1.4pt,line join=round] ( 43.26, 38.02) --
	( 43.44, 38.02) --
	( 43.44, 39.00) --
	( 43.73, 39.00) --
	( 43.73, 39.97) --
	( 44.07, 39.97) --
	( 44.07, 40.95) --
	( 44.11, 40.95) --
	( 44.11, 41.92) --
	( 44.34, 41.92) --
	( 44.34, 42.90) --
	( 45.17, 42.90) --
	( 45.17, 43.88) --
	( 45.18, 43.88) --
	( 45.18, 44.85) --
	( 45.77, 44.85) --
	( 45.77, 45.83) --
	( 46.45, 45.83) --
	( 46.45, 46.80) --
	( 46.90, 46.80) --
	( 46.90, 47.78) --
	( 46.94, 47.78) --
	( 46.94, 48.76) --
	( 50.94, 48.76) --
	( 50.94, 49.73) --
	( 51.73, 49.73) --
	( 51.73, 50.71) --
	( 52.60, 50.71) --
	( 52.60, 51.69) --
	( 55.06, 51.69) --
	( 55.06, 52.66) --
	( 55.94, 52.66) --
	( 55.94, 53.64) --
	( 57.08, 53.64) --
	( 57.08, 54.61) --
	( 58.11, 54.61) --
	( 58.11, 55.59) --
	( 60.41, 55.59) --
	( 60.41, 56.57) --
	( 66.10, 56.57) --
	( 66.10, 57.54) --
	( 70.69, 57.54) --
	( 70.69, 58.52) --
	( 71.01, 58.52) --
	( 71.01, 59.49) --
	( 71.19, 59.49) --
	( 71.19, 60.47) --
	( 82.76, 60.47) --
	( 82.76, 61.45) --
	( 82.81, 61.45) --
	( 82.81, 62.42) --
	( 83.18, 62.42) --
	( 83.18, 63.40) --
	( 84.50, 63.40) --
	( 84.50, 64.38) --
	( 86.55, 64.38) --
	( 86.55, 65.35) --
	( 91.05, 65.35) --
	( 91.05, 66.33) --
	( 99.17, 66.33) --
	( 99.17, 67.30) --
	(101.94, 67.30) --
	(101.94, 68.28) --
	(106.80, 68.28) --
	(106.80, 69.26) --
	(121.85, 69.26) --
	(121.85, 70.23) --
	(134.94, 70.23) --
	(134.94, 70.23);
\definecolor{drawColor}{RGB}{252,141,89}

\path[draw=drawColor,line width= 1.4pt,dash pattern=on 2pt off 2pt ,line join=round] ( 42.96, 38.02) --
	( 42.96, 38.02) --
	( 42.96, 39.00) --
	( 42.96, 39.00) --
	( 42.96, 39.97) --
	( 42.97, 39.97) --
	( 42.97, 40.95) --
	( 42.97, 40.95) --
	( 42.97, 41.92) --
	( 42.97, 41.92) --
	( 42.97, 42.90) --
	( 42.98, 42.90) --
	( 42.98, 43.88) --
	( 42.99, 43.88) --
	( 42.99, 44.85) --
	( 42.99, 44.85) --
	( 42.99, 45.83) --
	( 42.99, 45.83) --
	( 42.99, 46.80) --
	( 42.99, 46.80) --
	( 42.99, 47.78) --
	( 43.00, 47.78) --
	( 43.00, 48.76) --
	( 43.00, 48.76) --
	( 43.00, 49.73) --
	( 43.00, 49.73) --
	( 43.00, 50.71) --
	( 43.00, 50.71) --
	( 43.00, 51.69) --
	( 43.01, 51.69) --
	( 43.01, 52.66) --
	( 43.01, 52.66) --
	( 43.01, 53.64) --
	( 43.01, 53.64) --
	( 43.01, 54.61) --
	( 43.01, 54.61) --
	( 43.01, 55.59) --
	( 43.01, 55.59) --
	( 43.01, 56.57) --
	( 43.02, 56.57) --
	( 43.02, 57.54) --
	( 43.02, 57.54) --
	( 43.02, 58.52) --
	( 43.02, 58.52) --
	( 43.02, 59.49) --
	( 43.03, 59.49) --
	( 43.03, 60.47) --
	( 43.03, 60.47) --
	( 43.03, 61.45) --
	( 43.03, 61.45) --
	( 43.03, 62.42) --
	( 43.03, 62.42) --
	( 43.03, 63.40) --
	( 43.03, 63.40) --
	( 43.03, 64.38) --
	( 43.03, 64.38) --
	( 43.03, 65.35) --
	( 43.04, 65.35) --
	( 43.04, 66.33) --
	( 43.04, 66.33) --
	( 43.04, 67.30) --
	( 43.05, 67.30) --
	( 43.05, 68.28) --
	( 43.06, 68.28) --
	( 43.06, 69.26) --
	( 43.07, 69.26) --
	( 43.07, 70.23) --
	( 43.08, 70.23) --
	( 43.08, 71.21) --
	( 43.09, 71.21) --
	( 43.09, 72.18) --
	( 43.09, 72.18) --
	( 43.09, 73.16) --
	( 43.10, 73.16) --
	( 43.10, 74.14) --
	( 43.10, 74.14) --
	( 43.10, 75.11) --
	( 43.10, 75.11) --
	( 43.10, 76.09) --
	( 43.11, 76.09) --
	( 43.11, 77.07) --
	( 43.12, 77.07) --
	( 43.12, 78.04) --
	( 43.14, 78.04) --
	( 43.14, 79.02) --
	( 43.14, 79.02) --
	( 43.14, 79.99) --
	( 43.14, 79.99) --
	( 43.14, 80.97) --
	( 43.15, 80.97) --
	( 43.15, 81.95) --
	( 43.15, 81.95) --
	( 43.15, 82.92) --
	( 43.16, 82.92) --
	( 43.16, 83.90) --
	( 43.18, 83.90) --
	( 43.18, 84.88) --
	( 43.18, 84.88) --
	( 43.18, 85.85) --
	( 43.18, 85.85) --
	( 43.18, 86.83) --
	( 43.18, 86.83) --
	( 43.18, 87.80) --
	( 43.20, 87.80) --
	( 43.20, 88.78) --
	( 43.21, 88.78) --
	( 43.21, 89.76) --
	( 43.21, 89.76) --
	( 43.21, 90.73) --
	( 43.25, 90.73) --
	( 43.25, 91.71) --
	( 43.42, 91.71) --
	( 43.42, 92.68) --
	( 43.55, 92.68) --
	( 43.55, 93.66) --
	( 43.56, 93.66) --
	( 43.56, 94.64) --
	( 43.58, 94.64) --
	( 43.58, 95.61) --
	( 43.65, 95.61) --
	( 43.65, 96.59) --
	( 43.65, 96.59) --
	( 43.65, 97.57) --
	( 43.67, 97.57) --
	( 43.67, 98.54) --
	( 43.74, 98.54) --
	( 43.74, 99.52) --
	( 43.83, 99.52) --
	( 43.83,100.49) --
	( 43.85,100.49) --
	( 43.85,101.47) --
	( 43.86,101.47) --
	( 43.86,102.45) --
	( 43.95,102.45) --
	( 43.95,103.42) --
	( 43.97,103.42) --
	( 43.97,104.40) --
	( 43.98,104.40) --
	( 43.98,105.37) --
	( 44.03,105.37) --
	( 44.03,106.35) --
	( 44.68,106.35) --
	( 44.68,107.33) --
	( 45.61,107.33) --
	( 45.61,108.30) --
	( 45.80,108.30) --
	( 45.80,109.28) --
	( 58.37,109.28) --
	( 58.37,110.26) --
	( 67.37,110.26) --
	( 67.37,111.23) --
	( 75.73,111.23) --
	( 75.73,112.21) --
	( 82.14,112.21) --
	( 82.14,113.18) --
	( 85.02,113.18) --
	( 85.02,114.16) --
	(100.77,114.16) --
	(100.77,115.14) --
	(108.51,115.14) --
	(108.51,116.11) --
	(110.21,116.11) --
	(110.21,117.09) --
	(112.10,117.09) --
	(112.10,118.06) --
	(122.87,118.06) --
	(122.87,119.04) --
	(124.93,119.04) --
	(124.93,120.02) --
	(133.55,120.02) --
	(133.55,120.99) --
	(134.94,120.99) --
	(134.94,120.99);
\definecolor{drawColor}{RGB}{69,117,180}

\node[text=drawColor,anchor=base west,inner sep=0pt, outer sep=0pt, scale=  1.00] at ( 100,90) {34/100};
\definecolor{drawColor}{RGB}{252,141,89}

\node[text=drawColor,anchor=base west,inner sep=0pt, outer sep=0pt, scale=  1.00] at ( 100,80) {86/100};
\end{scope}
\begin{scope}
\path[clip] (  0.00,  0.00) rectangle (144.54,144.54);
\definecolor{drawColor}{gray}{0.30}

\node[text=drawColor,anchor=base,inner sep=0pt, outer sep=0pt, scale=  1.00] at (132.38, 20.78) {7000};

\node[text=drawColor,anchor=base,inner sep=0pt, outer sep=0pt, scale=  1.00] at (106.82, 20.78) {5000};

\node[text=drawColor,anchor=base,inner sep=0pt, outer sep=0pt, scale=  1.00] at ( 81.27, 20.78) {3000};

\node[text=drawColor,anchor=base,inner sep=0pt, outer sep=0pt, scale=  1.00] at ( 55.71, 20.78) {1000};
\end{scope}
\end{tikzpicture}}%
  } \caption{ECDF of computation time for the shortest-path problem with
  continuous budgeted uncertainty sets and hedging costs $h = 3$ (left), $h =
  4$ (middle), and $h = 5$ (right). The numbers in the legend indicate the
  number of instances solved within the time limit of 2 hours out of 100
  instances. Solid \blue{blue}: \blue{bilevel approach}. Dotted \orange{orange}:
  \orange{robust approach}.}
  \label{fig:num-results:instance-gen:hedging:ecdf}
\end{figure}
Figure~\ref{fig:num-results:instance-gen:hedging:ecdf} shows the empirical
cumulative distribution functions (ECDFs) of computation times under a time
limit of 2 hours for the shortest-path problem with continuous budgeted
uncertainty sets and hedging costs $h = 3$ (left plot), $h = 4$ (middle
plot), and $h = 5$ (right plot). We observe that the robust approach
outperforms the bilevel approach in
terms of computation time for all considered hedging costs. However, the
difference between the two approaches becomes more significant for $h=5$, where
the optimal solutions typically contain no hedging decisions. }

\subsection{Numerical Results for Continuous Uncertainty Sets}
\label{sec:num-results:cont}

In this section, we analyze the continuous versions of the budgeted and the
knapsack uncertainty sets for the shortest-path problem, the knapsack problem,
and the portfolio selection problem. We compare the classic robust
approach~\eqref{eq:general-model:classic-robust-reformulation} with
the bilevel approach using the strong-duality based single-level
reformulation~\eqref{eq:general-model:bilevel-reformulation:sd:single-level}.

\subsubsection{Budgeted Uncertainty}
\label{sec:num-results:cont:budgeted}
We first consider all three applications with the respective continuous
decision-dependent budgeted uncertainty set.  The complete derivation of the
reformulations can be found in
Appendix~\ref{sec:appendix:applications:shortest-path:budgeted} for the
shortest-path problem, in
Appendix~\ref{sec:appendix:applications:knapsack:budgeted} for the knapsack
problem, and in Appendix~\ref{sec:appendix:applications:portfolio:budgeted} for
the portfolio selection problem.

\begin{figure}
  \centering
  \makebox[\textwidth][c]{%
    \scalebox{1.0}{\begin{tikzpicture}[x=1pt,y=1pt]
\definecolor{fillColor}{RGB}{255,255,255}
\path[use as bounding box,fill=fillColor,fill opacity=0.00] (35,0) rectangle (108.54,144.54);
\begin{scope}
\path[clip] ( 38.33, 32.16) rectangle (139.54,139.54);
\definecolor{drawColor}{gray}{0.92}

\path[draw=drawColor,line width= 0.3pt,line join=round] ( 38.33, 49.25) --
	(139.54, 49.25);

\path[draw=drawColor,line width= 0.3pt,line join=round] ( 38.33, 73.65) --
	(139.54, 73.65);

\path[draw=drawColor,line width= 0.3pt,line join=round] ( 38.33, 98.05) --
	(139.54, 98.05);

\path[draw=drawColor,line width= 0.3pt,line join=round] ( 38.33,122.46) --
	(139.54,122.46);

\path[draw=drawColor,line width= 0.3pt,line join=round] (119.60, 32.16) --
	(119.60,139.54);

\path[draw=drawColor,line width= 0.3pt,line join=round] ( 94.05, 32.16) --
	( 94.05,139.54);

\path[draw=drawColor,line width= 0.3pt,line join=round] ( 68.49, 32.16) --
	( 68.49,139.54);

\path[draw=drawColor,line width= 0.5pt,line join=round] ( 38.33, 37.04) --
	(139.54, 37.04);

\path[draw=drawColor,line width= 0.5pt,line join=round] ( 38.33, 61.45) --
	(139.54, 61.45);

\path[draw=drawColor,line width= 0.5pt,line join=round] ( 38.33, 85.85) --
	(139.54, 85.85);

\path[draw=drawColor,line width= 0.5pt,line join=round] ( 38.33,110.26) --
	(139.54,110.26);

\path[draw=drawColor,line width= 0.5pt,line join=round] ( 38.33,134.66) --
	(139.54,134.66);

\path[draw=drawColor,line width= 0.5pt,line join=round] (132.38, 32.16) --
	(132.38,139.54);

\path[draw=drawColor,line width= 0.5pt,line join=round] (106.82, 32.16) --
	(106.82,139.54);

\path[draw=drawColor,line width= 0.5pt,line join=round] ( 81.27, 32.16) --
	( 81.27,139.54);

\path[draw=drawColor,line width= 0.5pt,line join=round] ( 55.71, 32.16) --
	( 55.71,139.54);
\definecolor{drawColor}{RGB}{69,117,180}

\path[draw=drawColor,line width= 1.4pt,line join=round] ( 43.20, 38.02) --
	( 43.33, 38.02) --
	( 43.33, 39.00) --
	( 43.36, 39.00) --
	( 43.36, 39.97) --
	( 43.91, 39.97) --
	( 43.91, 40.95) --
	( 43.92, 40.95) --
	( 43.92, 41.92) --
	( 44.30, 41.92) --
	( 44.30, 42.90) --
	( 44.37, 42.90) --
	( 44.37, 43.88) --
	( 44.57, 43.88) --
	( 44.57, 44.85) --
	( 44.63, 44.85) --
	( 44.63, 45.83) --
	( 44.74, 45.83) --
	( 44.74, 46.80) --
	( 45.04, 46.80) --
	( 45.04, 47.78) --
	( 47.09, 47.78) --
	( 47.09, 48.76) --
	( 48.42, 48.76) --
	( 48.42, 49.73) --
	( 48.76, 49.73) --
	( 48.76, 50.71) --
	( 49.98, 50.71) --
	( 49.98, 51.69) --
	( 50.33, 51.69) --
	( 50.33, 52.66) --
	( 53.64, 52.66) --
	( 53.64, 53.64) --
	( 53.78, 53.64) --
	( 53.78, 54.61) --
	( 54.12, 54.61) --
	( 54.12, 55.59) --
	( 54.57, 55.59) --
	( 54.57, 56.57) --
	( 55.36, 56.57) --
	( 55.36, 57.54) --
	( 55.51, 57.54) --
	( 55.51, 58.52) --
	( 56.29, 58.52) --
	( 56.29, 59.49) --
	( 62.05, 59.49) --
	( 62.05, 60.47) --
	( 66.59, 60.47) --
	( 66.59, 61.45) --
	( 75.41, 61.45) --
	( 75.41, 62.42) --
	( 82.76, 62.42) --
	( 82.76, 63.40) --
	( 83.15, 63.40) --
	( 83.15, 64.38) --
	( 83.57, 64.38) --
	( 83.57, 65.35) --
	( 84.76, 65.35) --
	( 84.76, 66.33) --
	( 84.97, 66.33) --
	( 84.97, 67.30) --
	( 94.31, 67.30) --
	( 94.31, 68.28) --
	( 94.80, 68.28) --
	( 94.80, 69.26) --
	(105.00, 69.26) --
	(105.00, 70.23) --
	(107.13, 70.23) --
	(107.13, 71.21) --
	(134.94, 71.21) --
	(134.94, 71.21);
\definecolor{drawColor}{RGB}{252,141,89}

\path[draw=drawColor,line width= 1.4pt,dash pattern=on 2pt off 2pt ,line join=round] ( 42.99, 38.02) --
	( 43.02, 38.02) --
	( 43.02, 39.00) --
	( 43.09, 39.00) --
	( 43.09, 39.97) --
	( 43.09, 39.97) --
	( 43.09, 40.95) --
	( 43.09, 40.95) --
	( 43.09, 41.92) --
	( 43.15, 41.92) --
	( 43.15, 42.90) --
	( 43.18, 42.90) --
	( 43.18, 43.88) --
	( 43.18, 43.88) --
	( 43.18, 44.85) --
	( 43.30, 44.85) --
	( 43.30, 45.83) --
	( 43.31, 45.83) --
	( 43.31, 46.80) --
	( 43.35, 46.80) --
	( 43.35, 47.78) --
	( 43.38, 47.78) --
	( 43.38, 48.76) --
	( 43.45, 48.76) --
	( 43.45, 49.73) --
	( 43.60, 49.73) --
	( 43.60, 50.71) --
	( 43.83, 50.71) --
	( 43.83, 51.69) --
	( 43.91, 51.69) --
	( 43.91, 52.66) --
	( 44.04, 52.66) --
	( 44.04, 53.64) --
	( 44.05, 53.64) --
	( 44.05, 54.61) --
	( 44.32, 54.61) --
	( 44.32, 55.59) --
	( 44.56, 55.59) --
	( 44.56, 56.57) --
	( 44.83, 56.57) --
	( 44.83, 57.54) --
	( 44.90, 57.54) --
	( 44.90, 58.52) --
	( 44.97, 58.52) --
	( 44.97, 59.49) --
	( 45.85, 59.49) --
	( 45.85, 60.47) --
	( 46.04, 60.47) --
	( 46.04, 61.45) --
	( 46.10, 61.45) --
	( 46.10, 62.42) --
	( 46.65, 62.42) --
	( 46.65, 63.40) --
	( 47.08, 63.40) --
	( 47.08, 64.38) --
	( 47.19, 64.38) --
	( 47.19, 65.35) --
	( 48.73, 65.35) --
	( 48.73, 66.33) --
	( 49.03, 66.33) --
	( 49.03, 67.30) --
	( 50.73, 67.30) --
	( 50.73, 68.28) --
	( 50.97, 68.28) --
	( 50.97, 69.26) --
	( 52.77, 69.26) --
	( 52.77, 70.23) --
	( 54.14, 70.23) --
	( 54.14, 71.21) --
	( 54.16, 71.21) --
	( 54.16, 72.18) --
	( 55.74, 72.18) --
	( 55.74, 73.16) --
	( 58.23, 73.16) --
	( 58.23, 74.14) --
	( 62.17, 74.14) --
	( 62.17, 75.11) --
	( 62.96, 75.11) --
	( 62.96, 76.09) --
	( 65.41, 76.09) --
	( 65.41, 77.07) --
	( 73.36, 77.07) --
	( 73.36, 78.04) --
	( 82.20, 78.04) --
	( 82.20, 79.02) --
	( 83.33, 79.02) --
	( 83.33, 79.99) --
	( 89.58, 79.99) --
	( 89.58, 80.97) --
	( 93.06, 80.97) --
	( 93.06, 81.95) --
	( 95.83, 81.95) --
	( 95.83, 82.92) --
	( 99.04, 82.92) --
	( 99.04, 83.90) --
	(117.98, 83.90) --
	(117.98, 84.88) --
	(119.22, 84.88) --
	(119.22, 85.85) --
	(120.18, 85.85) --
	(120.18, 86.83) --
	(128.44, 86.83) --
	(128.44, 87.80) --
	(134.94, 87.80) --
	(134.94, 87.80);
\definecolor{drawColor}{RGB}{69,117,180}

\node[text=drawColor,anchor=base west,inner sep=0pt, outer sep=0pt, scale=  1.00] at ( 40,120) {35/100};
\definecolor{drawColor}{RGB}{252,141,89}

\node[text=drawColor,anchor=base west,inner sep=0pt, outer sep=0pt, scale=  1.00] at ( 40,110) {52/100};
\end{scope}
\begin{scope}
\path[clip] (  0.00,  0.00) rectangle (144.54,144.54);
\definecolor{drawColor}{gray}{0.30}

\node[text=drawColor,anchor=base east,inner sep=0pt, outer sep=0pt, scale=  1.00] at ( 33.83, 33.60) {0};

\node[text=drawColor,anchor=base east,inner sep=0pt, outer sep=0pt, scale=  1.00] at ( 33.83, 58.00) {25};

\node[text=drawColor,anchor=base east,inner sep=0pt, outer sep=0pt, scale=  1.00] at ( 33.83, 82.41) {50};

\node[text=drawColor,anchor=base east,inner sep=0pt, outer sep=0pt, scale=  1.00] at ( 33.83,106.81) {75};

\node[text=drawColor,anchor=base east,inner sep=0pt, outer sep=0pt, scale=  1.00] at ( 33.83,131.22) {100};
\end{scope}
\begin{scope}
\path[clip] (  0.00,  0.00) rectangle (144.54,144.54);
\definecolor{drawColor}{gray}{0.30}

\node[text=drawColor,anchor=base,inner sep=0pt, outer sep=0pt, scale=  1.00] at (132.38, 20.78) {7000};

\node[text=drawColor,anchor=base,inner sep=0pt, outer sep=0pt, scale=  1.00] at (106.82, 20.78) {5000};

\node[text=drawColor,anchor=base,inner sep=0pt, outer sep=0pt, scale=  1.00] at ( 81.27, 20.78) {3000};

\node[text=drawColor,anchor=base,inner sep=0pt, outer sep=0pt, scale=  1.00] at ( 55.71, 20.78) {1000};
\end{scope}
\begin{scope}
\path[clip] (  0.00,  0.00) rectangle (144.54,144.54);
\definecolor{drawColor}{RGB}{0,0,0}

\end{scope}
\begin{scope}
\path[clip] (  0.00,  0.00) rectangle (144.54,144.54);
\definecolor{drawColor}{RGB}{0,0,0}

\node[text=drawColor,rotate= 90.00,anchor=base,inner sep=0pt, outer sep=0pt, scale=  1.00] at ( 15, 85.85) {\# of solved instances};
\end{scope}
\end{tikzpicture}}%
    \scalebox{1.0}{\begin{tikzpicture}[x=1pt,y=1pt]
\definecolor{fillColor}{RGB}{255,255,255}
\path[use as bounding box,fill=fillColor,fill opacity=0.00] (0,0) rectangle (108.54,144.54);
\begin{scope}
\path[clip] ( 38.33, 32.16) rectangle (139.54,139.54);
\definecolor{drawColor}{gray}{0.92}

\path[draw=drawColor,line width= 0.3pt,line join=round] ( 38.33, 49.25) --
	(139.54, 49.25);

\path[draw=drawColor,line width= 0.3pt,line join=round] ( 38.33, 73.65) --
	(139.54, 73.65);

\path[draw=drawColor,line width= 0.3pt,line join=round] ( 38.33, 98.05) --
	(139.54, 98.05);

\path[draw=drawColor,line width= 0.3pt,line join=round] ( 38.33,122.46) --
	(139.54,122.46);

\path[draw=drawColor,line width= 0.3pt,line join=round] (119.60, 32.16) --
	(119.60,139.54);

\path[draw=drawColor,line width= 0.3pt,line join=round] ( 94.05, 32.16) --
	( 94.05,139.54);

\path[draw=drawColor,line width= 0.3pt,line join=round] ( 68.49, 32.16) --
	( 68.49,139.54);

\path[draw=drawColor,line width= 0.5pt,line join=round] ( 38.33, 37.04) --
	(139.54, 37.04);

\path[draw=drawColor,line width= 0.5pt,line join=round] ( 38.33, 61.45) --
	(139.54, 61.45);

\path[draw=drawColor,line width= 0.5pt,line join=round] ( 38.33, 85.85) --
	(139.54, 85.85);

\path[draw=drawColor,line width= 0.5pt,line join=round] ( 38.33,110.26) --
	(139.54,110.26);

\path[draw=drawColor,line width= 0.5pt,line join=round] ( 38.33,134.66) --
	(139.54,134.66);

\path[draw=drawColor,line width= 0.5pt,line join=round] (132.38, 32.16) --
	(132.38,139.54);

\path[draw=drawColor,line width= 0.5pt,line join=round] (106.82, 32.16) --
	(106.82,139.54);

\path[draw=drawColor,line width= 0.5pt,line join=round] ( 81.27, 32.16) --
	( 81.27,139.54);

\path[draw=drawColor,line width= 0.5pt,line join=round] ( 55.71, 32.16) --
	( 55.71,139.54);
\definecolor{drawColor}{RGB}{69,117,180}

\path[draw=drawColor,line width= 1.4pt,line join=round] ( 42.93, 38.02) --
	( 42.93, 38.02) --
	( 42.93, 39.00) --
	( 42.93, 39.00) --
	( 42.93, 39.97) --
	( 42.94, 39.97) --
	( 42.94, 40.95) --
	( 42.94, 40.95) --
	( 42.94, 41.92) --
	( 42.95, 41.92) --
	( 42.95, 42.90) --
	( 42.96, 42.90) --
	( 42.96, 43.88) --
	( 42.96, 43.88) --
	( 42.96, 44.85) --
	( 42.97, 44.85) --
	( 42.97, 45.83) --
	( 42.98, 45.83) --
	( 42.98, 46.80) --
	( 42.99, 46.80) --
	( 42.99, 47.78) --
	( 43.00, 47.78) --
	( 43.00, 48.76) --
	( 43.00, 48.76) --
	( 43.00, 49.73) --
	( 43.01, 49.73) --
	( 43.01, 50.71) --
	( 43.03, 50.71) --
	( 43.03, 51.69) --
	( 43.03, 51.69) --
	( 43.03, 52.66) --
	( 43.04, 52.66) --
	( 43.04, 53.64) --
	( 43.08, 53.64) --
	( 43.08, 54.61) --
	( 43.11, 54.61) --
	( 43.11, 55.59) --
	( 43.12, 55.59) --
	( 43.12, 56.57) --
	( 43.12, 56.57) --
	( 43.12, 57.54) --
	( 43.16, 57.54) --
	( 43.16, 58.52) --
	( 43.26, 58.52) --
	( 43.26, 59.49) --
	( 43.27, 59.49) --
	( 43.27, 60.47) --
	( 43.31, 60.47) --
	( 43.31, 61.45) --
	( 43.32, 61.45) --
	( 43.32, 62.42) --
	( 43.34, 62.42) --
	( 43.34, 63.40) --
	( 43.47, 63.40) --
	( 43.47, 64.38) --
	( 43.71, 64.38) --
	( 43.71, 65.35) --
	( 43.81, 65.35) --
	( 43.81, 66.33) --
	( 44.06, 66.33) --
	( 44.06, 67.30) --
	( 44.63, 67.30) --
	( 44.63, 68.28) --
	( 44.66, 68.28) --
	( 44.66, 69.26) --
	( 45.80, 69.26) --
	( 45.80, 70.23) --
	( 49.28, 70.23) --
	( 49.28, 71.21) --
	( 49.85, 71.21) --
	( 49.85, 72.18) --
	( 51.26, 72.18) --
	( 51.26, 73.16) --
	( 52.99, 73.16) --
	( 52.99, 74.14) --
	( 53.17, 74.14) --
	( 53.17, 75.11) --
	( 56.08, 75.11) --
	( 56.08, 76.09) --
	( 56.42, 76.09) --
	( 56.42, 77.07) --
	( 59.34, 77.07) --
	( 59.34, 78.04) --
	( 59.91, 78.04) --
	( 59.91, 79.02) --
	( 61.82, 79.02) --
	( 61.82, 79.99) --
	( 62.46, 79.99) --
	( 62.46, 80.97) --
	( 62.69, 80.97) --
	( 62.69, 81.95) --
	( 80.99, 81.95) --
	( 80.99, 82.92) --
	(113.29, 82.92) --
	(113.29, 83.90) --
	(116.43, 83.90) --
	(116.43, 84.88) --
	(133.83, 84.88) --
	(133.83, 85.85) --
	(134.94, 85.85) --
	(134.94, 85.85);
\definecolor{drawColor}{RGB}{252,141,89}

\path[draw=drawColor,line width= 1.4pt,dash pattern=on 2pt off 2pt ,line join=round] ( 42.93, 38.02) --
	( 42.93, 38.02) --
	( 42.93, 39.00) --
	( 42.94, 39.00) --
	( 42.94, 39.97) --
	( 42.94, 39.97) --
	( 42.94, 40.95) --
	( 42.94, 40.95) --
	( 42.94, 41.92) --
	( 42.95, 41.92) --
	( 42.95, 42.90) --
	( 42.95, 42.90) --
	( 42.95, 43.88) --
	( 42.96, 43.88) --
	( 42.96, 44.85) --
	( 42.96, 44.85) --
	( 42.96, 45.83) --
	( 42.98, 45.83) --
	( 42.98, 46.80) --
	( 42.99, 46.80) --
	( 42.99, 47.78) --
	( 43.00, 47.78) --
	( 43.00, 48.76) --
	( 43.00, 48.76) --
	( 43.00, 49.73) --
	( 43.01, 49.73) --
	( 43.01, 50.71) --
	( 43.02, 50.71) --
	( 43.02, 51.69) --
	( 43.04, 51.69) --
	( 43.04, 52.66) --
	( 43.04, 52.66) --
	( 43.04, 53.64) --
	( 43.05, 53.64) --
	( 43.05, 54.61) --
	( 43.07, 54.61) --
	( 43.07, 55.59) --
	( 43.08, 55.59) --
	( 43.08, 56.57) --
	( 43.08, 56.57) --
	( 43.08, 57.54) --
	( 43.17, 57.54) --
	( 43.17, 58.52) --
	( 43.18, 58.52) --
	( 43.18, 59.49) --
	( 43.20, 59.49) --
	( 43.20, 60.47) --
	( 43.21, 60.47) --
	( 43.21, 61.45) --
	( 43.41, 61.45) --
	( 43.41, 62.42) --
	( 43.45, 62.42) --
	( 43.45, 63.40) --
	( 43.64, 63.40) --
	( 43.64, 64.38) --
	( 43.65, 64.38) --
	( 43.65, 65.35) --
	( 43.80, 65.35) --
	( 43.80, 66.33) --
	( 43.87, 66.33) --
	( 43.87, 67.30) --
	( 44.09, 67.30) --
	( 44.09, 68.28) --
	( 44.40, 68.28) --
	( 44.40, 69.26) --
	( 44.46, 69.26) --
	( 44.46, 70.23) --
	( 47.40, 70.23) --
	( 47.40, 71.21) --
	( 47.86, 71.21) --
	( 47.86, 72.18) --
	( 49.53, 72.18) --
	( 49.53, 73.16) --
	( 50.82, 73.16) --
	( 50.82, 74.14) --
	( 51.44, 74.14) --
	( 51.44, 75.11) --
	( 52.99, 75.11) --
	( 52.99, 76.09) --
	( 54.49, 76.09) --
	( 54.49, 77.07) --
	( 56.55, 77.07) --
	( 56.55, 78.04) --
	( 56.92, 78.04) --
	( 56.92, 79.02) --
	( 61.75, 79.02) --
	( 61.75, 79.99) --
	( 63.29, 79.99) --
	( 63.29, 80.97) --
	( 67.68, 80.97) --
	( 67.68, 81.95) --
	( 75.84, 81.95) --
	( 75.84, 82.92) --
	( 82.19, 82.92) --
	( 82.19, 83.90) --
	( 91.41, 83.90) --
	( 91.41, 84.88) --
	( 98.68, 84.88) --
	( 98.68, 85.85) --
	(100.07, 85.85) --
	(100.07, 86.83) --
	(103.17, 86.83) --
	(103.17, 87.80) --
	(108.01, 87.80) --
	(108.01, 88.78) --
	(128.17, 88.78) --
	(128.17, 89.76) --
	(132.06, 89.76) --
	(132.06, 90.73) --
	(134.94, 90.73) --
	(134.94, 90.73);
\definecolor{drawColor}{RGB}{69,117,180}

\node[text=drawColor,anchor=base west,inner sep=0pt, outer sep=0pt, scale=  1.00] at ( 40,120) {50/100};
\definecolor{drawColor}{RGB}{252,141,89}

\node[text=drawColor,anchor=base west,inner sep=0pt, outer sep=0pt, scale=  1.00] at ( 40,110) {55/100};
\end{scope}
\begin{scope}
\path[clip] (  0.00,  0.00) rectangle (144.54,144.54);
\definecolor{drawColor}{gray}{0.30}

\node[text=drawColor,anchor=base,inner sep=0pt, outer sep=0pt, scale=  1.00] at (132.38, 20.78) {7000};

\node[text=drawColor,anchor=base,inner sep=0pt, outer sep=0pt, scale=  1.00] at (106.82, 20.78) {5000};

\node[text=drawColor,anchor=base,inner sep=0pt, outer sep=0pt, scale=  1.00] at ( 81.27, 20.78) {3000};

\node[text=drawColor,anchor=base,inner sep=0pt, outer sep=0pt, scale=  1.00] at ( 55.71, 20.78) {1000};
\end{scope}
\begin{scope}
\path[clip] (  0.00,  0.00) rectangle (144.54,144.54);
\definecolor{drawColor}{RGB}{0,0,0}

\node[text=drawColor,anchor=base,inner sep=0pt, outer sep=0pt, scale=  1.00] at ( 88.93,  6.94) {Computation time (s)};
\end{scope}
\end{tikzpicture}}%
    \scalebox{1.0}{\begin{tikzpicture}[x=1pt,y=1pt]
\definecolor{fillColor}{RGB}{255,255,255}
\path[use as bounding box,fill=fillColor,fill opacity=0.00] (0,0) rectangle (108.54,144.54);
\begin{scope}
\path[clip] ( 38.33, 32.16) rectangle (139.54,139.54);
\definecolor{drawColor}{gray}{0.92}

\path[draw=drawColor,line width= 0.3pt,line join=round] ( 38.33, 49.25) --
	(139.54, 49.25);

\path[draw=drawColor,line width= 0.3pt,line join=round] ( 38.33, 73.65) --
	(139.54, 73.65);

\path[draw=drawColor,line width= 0.3pt,line join=round] ( 38.33, 98.05) --
	(139.54, 98.05);

\path[draw=drawColor,line width= 0.3pt,line join=round] ( 38.33,122.46) --
	(139.54,122.46);

\path[draw=drawColor,line width= 0.3pt,line join=round] (119.60, 32.16) --
	(119.60,139.54);

\path[draw=drawColor,line width= 0.3pt,line join=round] ( 94.05, 32.16) --
	( 94.05,139.54);

\path[draw=drawColor,line width= 0.3pt,line join=round] ( 68.49, 32.16) --
	( 68.49,139.54);

\path[draw=drawColor,line width= 0.5pt,line join=round] ( 38.33, 37.04) --
	(139.54, 37.04);

\path[draw=drawColor,line width= 0.5pt,line join=round] ( 38.33, 61.45) --
	(139.54, 61.45);

\path[draw=drawColor,line width= 0.5pt,line join=round] ( 38.33, 85.85) --
	(139.54, 85.85);

\path[draw=drawColor,line width= 0.5pt,line join=round] ( 38.33,110.26) --
	(139.54,110.26);

\path[draw=drawColor,line width= 0.5pt,line join=round] ( 38.33,134.66) --
	(139.54,134.66);

\path[draw=drawColor,line width= 0.5pt,line join=round] (132.38, 32.16) --
	(132.38,139.54);

\path[draw=drawColor,line width= 0.5pt,line join=round] (106.82, 32.16) --
	(106.82,139.54);

\path[draw=drawColor,line width= 0.5pt,line join=round] ( 81.27, 32.16) --
	( 81.27,139.54);

\path[draw=drawColor,line width= 0.5pt,line join=round] ( 55.71, 32.16) --
	( 55.71,139.54);
\definecolor{drawColor}{RGB}{69,117,180}

\path[draw=drawColor,line width= 1.4pt,line join=round] ( 42.93, 38.02) --
	( 42.93, 38.02) --
	( 42.93, 39.00) --
	( 42.93, 39.00) --
	( 42.93, 39.97) --
	( 42.93, 39.97) --
	( 42.93, 40.95) --
	( 42.93, 40.95) --
	( 42.93, 41.92) --
	( 42.93, 41.92) --
	( 42.93, 42.90) --
	( 42.93, 42.90) --
	( 42.93, 43.88) --
	( 42.93, 43.88) --
	( 42.93, 44.85) --
	( 42.93, 44.85) --
	( 42.93, 45.83) --
	( 42.93, 45.83) --
	( 42.93, 46.80) --
	( 42.93, 46.80) --
	( 42.93, 47.78) --
	( 42.93, 47.78) --
	( 42.93, 48.76) --
	( 42.93, 48.76) --
	( 42.93, 49.73) --
	( 42.93, 49.73) --
	( 42.93, 50.71) --
	( 42.93, 50.71) --
	( 42.93, 51.69) --
	( 42.93, 51.69) --
	( 42.93, 52.66) --
	( 42.94, 52.66) --
	( 42.94, 53.64) --
	( 42.94, 53.64) --
	( 42.94, 54.61) --
	( 42.94, 54.61) --
	( 42.94, 55.59) --
	( 42.94, 55.59) --
	( 42.94, 56.57) --
	( 42.94, 56.57) --
	( 42.94, 57.54) --
	( 42.94, 57.54) --
	( 42.94, 58.52) --
	( 42.94, 58.52) --
	( 42.94, 59.49) --
	( 42.94, 59.49) --
	( 42.94, 60.47) --
	( 42.94, 60.47) --
	( 42.94, 61.45) --
	( 42.94, 61.45) --
	( 42.94, 62.42) --
	( 42.95, 62.42) --
	( 42.95, 63.40) --
	( 42.95, 63.40) --
	( 42.95, 64.38) --
	( 42.95, 64.38) --
	( 42.95, 65.35) --
	( 42.95, 65.35) --
	( 42.95, 66.33) --
	( 42.96, 66.33) --
	( 42.96, 67.30) --
	( 42.96, 67.30) --
	( 42.96, 68.28) --
	( 42.97, 68.28) --
	( 42.97, 69.26) --
	( 42.97, 69.26) --
	( 42.97, 70.23) --
	( 42.97, 70.23) --
	( 42.97, 71.21) --
	( 42.98, 71.21) --
	( 42.98, 72.18) --
	( 42.98, 72.18) --
	( 42.98, 73.16) --
	( 42.99, 73.16) --
	( 42.99, 74.14) --
	( 43.01, 74.14) --
	( 43.01, 75.11) --
	( 43.01, 75.11) --
	( 43.01, 76.09) --
	( 43.05, 76.09) --
	( 43.05, 77.07) --
	( 43.05, 77.07) --
	( 43.05, 78.04) --
	( 43.05, 78.04) --
	( 43.05, 79.02) --
	( 43.07, 79.02) --
	( 43.07, 79.99) --
	( 43.09, 79.99) --
	( 43.09, 80.97) --
	( 43.10, 80.97) --
	( 43.10, 81.95) --
	( 43.10, 81.95) --
	( 43.10, 82.92) --
	( 43.11, 82.92) --
	( 43.11, 83.90) --
	( 43.12, 83.90) --
	( 43.12, 84.88) --
	( 43.13, 84.88) --
	( 43.13, 85.85) --
	( 43.13, 85.85) --
	( 43.13, 86.83) --
	( 43.15, 86.83) --
	( 43.15, 87.80) --
	( 43.16, 87.80) --
	( 43.16, 88.78) --
	( 43.17, 88.78) --
	( 43.17, 89.76) --
	( 43.18, 89.76) --
	( 43.18, 90.73) --
	( 43.18, 90.73) --
	( 43.18, 91.71) --
	( 43.21, 91.71) --
	( 43.21, 92.68) --
	( 43.21, 92.68) --
	( 43.21, 93.66) --
	( 43.22, 93.66) --
	( 43.22, 94.64) --
	( 43.24, 94.64) --
	( 43.24, 95.61) --
	( 43.30, 95.61) --
	( 43.30, 96.59) --
	( 43.30, 96.59) --
	( 43.30, 97.57) --
	( 43.36, 97.57) --
	( 43.36, 98.54) --
	( 43.41, 98.54) --
	( 43.41, 99.52) --
	( 43.44, 99.52) --
	( 43.44,100.49) --
	( 43.45,100.49) --
	( 43.45,101.47) --
	( 43.57,101.47) --
	( 43.57,102.45) --
	( 43.96,102.45) --
	( 43.96,103.42) --
	( 44.03,103.42) --
	( 44.03,104.40) --
	( 44.19,104.40) --
	( 44.19,105.37) --
	( 45.07,105.37) --
	( 45.07,106.35) --
	( 45.73,106.35) --
	( 45.73,107.33) --
	( 64.10,107.33) --
	( 64.10,108.30) --
	( 70.67,108.30) --
	( 70.67,109.28) --
	(108.32,109.28) --
	(108.32,110.26) --
	(134.94,110.26) --
	(134.94,110.26);
\definecolor{drawColor}{RGB}{252,141,89}

\path[draw=drawColor,line width= 1.4pt,dash pattern=on 2pt off 2pt ,line join=round] ( 42.93, 38.02) --
	( 42.93, 38.02) --
	( 42.93, 39.00) --
	( 42.93, 39.00) --
	( 42.93, 39.97) --
	( 42.93, 39.97) --
	( 42.93, 40.95) --
	( 42.93, 40.95) --
	( 42.93, 41.92) --
	( 42.93, 41.92) --
	( 42.93, 42.90) --
	( 42.93, 42.90) --
	( 42.93, 43.88) --
	( 42.93, 43.88) --
	( 42.93, 44.85) --
	( 42.93, 44.85) --
	( 42.93, 45.83) --
	( 42.93, 45.83) --
	( 42.93, 46.80) --
	( 42.93, 46.80) --
	( 42.93, 47.78) --
	( 42.93, 47.78) --
	( 42.93, 48.76) --
	( 42.93, 48.76) --
	( 42.93, 49.73) --
	( 42.93, 49.73) --
	( 42.93, 50.71) --
	( 42.93, 50.71) --
	( 42.93, 51.69) --
	( 42.93, 51.69) --
	( 42.93, 52.66) --
	( 42.93, 52.66) --
	( 42.93, 53.64) --
	( 42.94, 53.64) --
	( 42.94, 54.61) --
	( 42.94, 54.61) --
	( 42.94, 55.59) --
	( 42.94, 55.59) --
	( 42.94, 56.57) --
	( 42.94, 56.57) --
	( 42.94, 57.54) --
	( 42.94, 57.54) --
	( 42.94, 58.52) --
	( 42.94, 58.52) --
	( 42.94, 59.49) --
	( 42.94, 59.49) --
	( 42.94, 60.47) --
	( 42.94, 60.47) --
	( 42.94, 61.45) --
	( 42.94, 61.45) --
	( 42.94, 62.42) --
	( 42.94, 62.42) --
	( 42.94, 63.40) --
	( 42.94, 63.40) --
	( 42.94, 64.38) --
	( 42.94, 64.38) --
	( 42.94, 65.35) --
	( 42.94, 65.35) --
	( 42.94, 66.33) --
	( 42.94, 66.33) --
	( 42.94, 67.30) --
	( 42.95, 67.30) --
	( 42.95, 68.28) --
	( 42.95, 68.28) --
	( 42.95, 69.26) --
	( 42.95, 69.26) --
	( 42.95, 70.23) --
	( 42.95, 70.23) --
	( 42.95, 71.21) --
	( 42.95, 71.21) --
	( 42.95, 72.18) --
	( 42.95, 72.18) --
	( 42.95, 73.16) --
	( 42.95, 73.16) --
	( 42.95, 74.14) --
	( 42.95, 74.14) --
	( 42.95, 75.11) --
	( 42.95, 75.11) --
	( 42.95, 76.09) --
	( 42.95, 76.09) --
	( 42.95, 77.07) --
	( 42.95, 77.07) --
	( 42.95, 78.04) --
	( 42.95, 78.04) --
	( 42.95, 79.02) --
	( 42.95, 79.02) --
	( 42.95, 79.99) --
	( 42.95, 79.99) --
	( 42.95, 80.97) --
	( 42.96, 80.97) --
	( 42.96, 81.95) --
	( 42.96, 81.95) --
	( 42.96, 82.92) --
	( 42.96, 82.92) --
	( 42.96, 83.90) --
	( 42.96, 83.90) --
	( 42.96, 84.88) --
	( 42.96, 84.88) --
	( 42.96, 85.85) --
	( 42.96, 85.85) --
	( 42.96, 86.83) --
	( 42.96, 86.83) --
	( 42.96, 87.80) --
	( 42.97, 87.80) --
	( 42.97, 88.78) --
	( 42.97, 88.78) --
	( 42.97, 89.76) --
	( 42.97, 89.76) --
	( 42.97, 90.73) --
	( 42.97, 90.73) --
	( 42.97, 91.71) --
	( 42.98, 91.71) --
	( 42.98, 92.68) --
	( 42.99, 92.68) --
	( 42.99, 93.66) --
	( 42.99, 93.66) --
	( 42.99, 94.64) --
	( 42.99, 94.64) --
	( 42.99, 95.61) --
	( 43.00, 95.61) --
	( 43.00, 96.59) --
	( 43.00, 96.59) --
	( 43.00, 97.57) --
	( 43.01, 97.57) --
	( 43.01, 98.54) --
	( 43.01, 98.54) --
	( 43.01, 99.52) --
	( 43.01, 99.52) --
	( 43.01,100.49) --
	( 43.02,100.49) --
	( 43.02,101.47) --
	( 43.02,101.47) --
	( 43.02,102.45) --
	( 43.03,102.45) --
	( 43.03,103.42) --
	( 43.03,103.42) --
	( 43.03,104.40) --
	( 43.05,104.40) --
	( 43.05,105.37) --
	( 43.09,105.37) --
	( 43.09,106.35) --
	( 43.09,106.35) --
	( 43.09,107.33) --
	( 43.10,107.33) --
	( 43.10,108.30) --
	( 43.10,108.30) --
	( 43.10,109.28) --
	( 43.12,109.28) --
	( 43.12,110.26) --
	( 43.13,110.26) --
	( 43.13,111.23) --
	( 43.14,111.23) --
	( 43.14,112.21) --
	( 43.15,112.21) --
	( 43.15,113.18) --
	( 43.15,113.18) --
	( 43.15,114.16) --
	( 43.15,114.16) --
	( 43.15,115.14) --
	( 43.15,115.14) --
	( 43.15,116.11) --
	( 43.17,116.11) --
	( 43.17,117.09) --
	( 43.17,117.09) --
	( 43.17,118.06) --
	( 43.19,118.06) --
	( 43.19,119.04) --
	( 43.30,119.04) --
	( 43.30,120.02) --
	( 43.38,120.02) --
	( 43.38,120.99) --
	( 43.41,120.99) --
	( 43.41,121.97) --
	( 43.53,121.97) --
	( 43.53,122.95) --
	( 43.56,122.95) --
	( 43.56,123.92) --
	( 43.81,123.92) --
	( 43.81,124.90) --
	( 44.09,124.90) --
	( 44.09,125.87) --
	( 44.74,125.87) --
	( 44.74,126.85) --
	( 44.77,126.85) --
	( 44.77,127.83) --
	( 48.04,127.83) --
	( 48.04,128.80) --
	( 60.04,128.80) --
	( 60.04,129.78) --
	( 70.22,129.78) --
	( 70.22,130.75) --
	( 72.90,130.75) --
	( 72.90,131.73) --
	( 77.27,131.73) --
	( 77.27,132.71) --
	( 96.34,132.71) --
	( 96.34,133.68) --
	(134.94,133.68) --
	(134.94,133.68);
\definecolor{drawColor}{RGB}{69,117,180}

\node[text=drawColor,anchor=base west,inner sep=0pt, outer sep=0pt, scale=  1.00] at ( 100,50) {75/100};
\definecolor{drawColor}{RGB}{252,141,89}

\node[text=drawColor,anchor=base west,inner sep=0pt, outer sep=0pt, scale=  1.00] at ( 100,40) {99/100};
\end{scope}
\begin{scope}
\path[clip] (  0.00,  0.00) rectangle (144.54,144.54);
\definecolor{drawColor}{gray}{0.30}

\node[text=drawColor,anchor=base,inner sep=0pt, outer sep=0pt, scale=  1.00] at (132.38, 20.78) {7000};

\node[text=drawColor,anchor=base,inner sep=0pt, outer sep=0pt, scale=  1.00] at (106.82, 20.78) {5000};

\node[text=drawColor,anchor=base,inner sep=0pt, outer sep=0pt, scale=  1.00] at ( 81.27, 20.78) {3000};

\node[text=drawColor,anchor=base,inner sep=0pt, outer sep=0pt, scale=  1.00] at ( 55.71, 20.78) {1000};
\end{scope}
\end{tikzpicture}}%
  } \caption{ECDF of computation time for the shortest-path problem (left), the
  knapsack problem (middle), and the portfolio selection problem (right) with
  continuous budgeted uncertainty sets. The numbers in the legend indicate the
  number of instances solved within the time limit of 2 hours out of 100
  instances. Solid \blue{blue}: \blue{bilevel approach}. Dotted \orange{orange}:
  \orange{robust approach}.}
  \label{fig:num-results:cont:budgeted:ecdf-time}
\end{figure}

For a time limit of 2 hours, the plots of
Figure~\ref{fig:num-results:cont:budgeted:ecdf-time} depict the ECDFs of
computation times regarding all instances from the test sets. It can be seen
that the robust approach outperforms the bilevel approach in terms of
computation time. Indeed, while it solves only \rev{$5$ more instances of the
knapsack problem (middle plot of
Figure~\ref{fig:num-results:cont:budgeted:ecdf-time}), it solves $17$ more
instances of the shortest-path problem (left plot) and $24$ more instances of
the portfolio selection problem (right plot).} The difference can also be
observed in the ECDFs of the number of branch-and-bound nodes as depicted in
Figure~\ref{fig:num-results:cont:ecdf-nodes} \rev{in
Appendix~\ref{sec:appendix:plots}}, where the robust approach tends to produce
smaller search trees than the bilevel approach for the \rev{shortest-path} and
the portfolio selection problem, while for the \rev{knapsack} problem, the
search trees are of similar size. Besides this, increasing the instance size
impacts the computation time for both approaches. For example, the robust
approach only solves $1$ instance of the \rev{shortest-path} problem with
\rev{$275$ nodes}, while the bilevel approach is not able to solve any instance
of a size larger than \rev{$200$ nodes} within the time limit. For the two
remaining applications, the runtimes increase similarly with increasing instance
size for both approaches. More details can be found in the scatter plots given
in Appendix~\ref{sec:appendix:plots}, where the runtimes of the individual
instances are shown.

\FloatBarrier

\subsubsection{Knapsack Uncertainty}
\label{sec:num-results:cont:knapsack}
As a second step, we consider all three applications with their respective
continuous decision-dependent knapsack uncertainty set. We again compare the
classic robust approach with the bilevel approach using the strong-duality based
single-level reformulation. The complete derivation of the reformulations can be
found in Appendix~\ref{sec:appendix:applications:shortest-path:knapsack} for the
shortest-path problem, in
Appendix~\ref{sec:appendix:applications:knapsack:knapsack} for the knapsack
problem, and in Appendix~\ref{sec:appendix:applications:portfolio:knapsack} for
the portfolio selection problem.

\begin{figure}
  \centering
  \makebox[\textwidth][c]{%
    \scalebox{1.0}{\input{plots/sp_cont_knap_ecdf_time.tex}}%
    \scalebox{1.0}{\begin{tikzpicture}[x=1pt,y=1pt]
\definecolor{fillColor}{RGB}{255,255,255}
\path[use as bounding box,fill=fillColor,fill opacity=0.00] (0,0) rectangle (108.54,144.54);
\begin{scope}
\path[clip] ( 38.33, 32.16) rectangle (139.54,139.54);
\definecolor{drawColor}{gray}{0.92}

\path[draw=drawColor,line width= 0.3pt,line join=round] ( 38.33, 49.25) --
	(139.54, 49.25);

\path[draw=drawColor,line width= 0.3pt,line join=round] ( 38.33, 73.65) --
	(139.54, 73.65);

\path[draw=drawColor,line width= 0.3pt,line join=round] ( 38.33, 98.05) --
	(139.54, 98.05);

\path[draw=drawColor,line width= 0.3pt,line join=round] ( 38.33,122.46) --
	(139.54,122.46);

\path[draw=drawColor,line width= 0.3pt,line join=round] (119.60, 32.16) --
	(119.60,139.54);

\path[draw=drawColor,line width= 0.3pt,line join=round] ( 94.05, 32.16) --
	( 94.05,139.54);

\path[draw=drawColor,line width= 0.3pt,line join=round] ( 68.49, 32.16) --
	( 68.49,139.54);

\path[draw=drawColor,line width= 0.5pt,line join=round] ( 38.33, 37.04) --
	(139.54, 37.04);

\path[draw=drawColor,line width= 0.5pt,line join=round] ( 38.33, 61.45) --
	(139.54, 61.45);

\path[draw=drawColor,line width= 0.5pt,line join=round] ( 38.33, 85.85) --
	(139.54, 85.85);

\path[draw=drawColor,line width= 0.5pt,line join=round] ( 38.33,110.26) --
	(139.54,110.26);

\path[draw=drawColor,line width= 0.5pt,line join=round] ( 38.33,134.66) --
	(139.54,134.66);

\path[draw=drawColor,line width= 0.5pt,line join=round] (132.38, 32.16) --
	(132.38,139.54);

\path[draw=drawColor,line width= 0.5pt,line join=round] (106.82, 32.16) --
	(106.82,139.54);

\path[draw=drawColor,line width= 0.5pt,line join=round] ( 81.27, 32.16) --
	( 81.27,139.54);

\path[draw=drawColor,line width= 0.5pt,line join=round] ( 55.71, 32.16) --
	( 55.71,139.54);
\definecolor{drawColor}{RGB}{69,117,180}

\path[draw=drawColor,line width= 1.4pt,line join=round] ( 43.01, 38.02) --
	( 43.03, 38.02) --
	( 43.03, 39.00) --
	( 43.07, 39.00) --
	( 43.07, 39.97) --
	( 43.09, 39.97) --
	( 43.09, 40.95) --
	( 43.10, 40.95) --
	( 43.10, 41.92) --
	( 43.12, 41.92) --
	( 43.12, 42.90) --
	( 43.21, 42.90) --
	( 43.21, 43.88) --
	( 43.35, 43.88) --
	( 43.35, 44.85) --
	( 43.46, 44.85) --
	( 43.46, 45.83) --
	( 43.47, 45.83) --
	( 43.47, 46.80) --
	( 43.48, 46.80) --
	( 43.48, 47.78) --
	( 43.53, 47.78) --
	( 43.53, 48.76) --
	( 43.60, 48.76) --
	( 43.60, 49.73) --
	( 43.65, 49.73) --
	( 43.65, 50.71) --
	( 43.79, 50.71) --
	( 43.79, 51.69) --
	( 43.87, 51.69) --
	( 43.87, 52.66) --
	( 43.92, 52.66) --
	( 43.92, 53.64) --
	( 44.26, 53.64) --
	( 44.26, 54.61) --
	( 44.40, 54.61) --
	( 44.40, 55.59) --
	( 45.19, 55.59) --
	( 45.19, 56.57) --
	( 45.50, 56.57) --
	( 45.50, 57.54) --
	( 45.91, 57.54) --
	( 45.91, 58.52) --
	( 46.29, 58.52) --
	( 46.29, 59.49) --
	( 46.36, 59.49) --
	( 46.36, 60.47) --
	( 47.03, 60.47) --
	( 47.03, 61.45) --
	( 47.73, 61.45) --
	( 47.73, 62.42) --
	( 48.67, 62.42) --
	( 48.67, 63.40) --
	( 49.32, 63.40) --
	( 49.32, 64.38) --
	( 50.40, 64.38) --
	( 50.40, 65.35) --
	( 51.12, 65.35) --
	( 51.12, 66.33) --
	( 51.71, 66.33) --
	( 51.71, 67.30) --
	( 52.12, 67.30) --
	( 52.12, 68.28) --
	( 53.19, 68.28) --
	( 53.19, 69.26) --
	( 54.75, 69.26) --
	( 54.75, 70.23) --
	( 54.75, 70.23) --
	( 54.75, 71.21) --
	( 56.06, 71.21) --
	( 56.06, 72.18) --
	( 58.96, 72.18) --
	( 58.96, 73.16) --
	( 62.25, 73.16) --
	( 62.25, 74.14) --
	( 63.80, 74.14) --
	( 63.80, 75.11) --
	( 65.19, 75.11) --
	( 65.19, 76.09) --
	( 67.60, 76.09) --
	( 67.60, 77.07) --
	( 68.04, 77.07) --
	( 68.04, 78.04) --
	( 68.05, 78.04) --
	( 68.05, 79.02) --
	( 68.49, 79.02) --
	( 68.49, 79.99) --
	( 70.67, 79.99) --
	( 70.67, 80.97) --
	( 72.21, 80.97) --
	( 72.21, 81.95) --
	( 72.51, 81.95) --
	( 72.51, 82.92) --
	( 76.89, 82.92) --
	( 76.89, 83.90) --
	( 78.15, 83.90) --
	( 78.15, 84.88) --
	( 78.88, 84.88) --
	( 78.88, 85.85) --
	( 80.18, 85.85) --
	( 80.18, 86.83) --
	( 81.10, 86.83) --
	( 81.10, 87.80) --
	( 81.84, 87.80) --
	( 81.84, 88.78) --
	( 83.78, 88.78) --
	( 83.78, 89.76) --
	( 87.18, 89.76) --
	( 87.18, 90.73) --
	( 88.05, 90.73) --
	( 88.05, 91.71) --
	( 91.20, 91.71) --
	( 91.20, 92.68) --
	( 96.22, 92.68) --
	( 96.22, 93.66) --
	(100.14, 93.66) --
	(100.14, 94.64) --
	(100.91, 94.64) --
	(100.91, 95.61) --
	(104.87, 95.61) --
	(104.87, 96.59) --
	(104.91, 96.59) --
	(104.91, 97.57) --
	(106.79, 97.57) --
	(106.79, 98.54) --
	(107.35, 98.54) --
	(107.35, 99.52) --
	(116.76, 99.52) --
	(116.76,100.49) --
	(116.95,100.49) --
	(116.95,101.47) --
	(121.14,101.47) --
	(121.14,102.45) --
	(122.14,102.45) --
	(122.14,103.42) --
	(128.08,103.42) --
	(128.08,104.40) --
	(134.94,104.40) --
	(134.94,104.40);
\definecolor{drawColor}{RGB}{252,141,89}

\path[draw=drawColor,line width= 1.4pt,dash pattern=on 2pt off 2pt ,line join=round] ( 43.02, 38.02) --
	( 43.04, 38.02) --
	( 43.04, 39.00) --
	( 43.04, 39.00) --
	( 43.04, 39.97) --
	( 43.06, 39.97) --
	( 43.06, 40.95) --
	( 43.07, 40.95) --
	( 43.07, 41.92) --
	( 43.10, 41.92) --
	( 43.10, 42.90) --
	( 43.23, 42.90) --
	( 43.23, 43.88) --
	( 43.31, 43.88) --
	( 43.31, 44.85) --
	( 43.37, 44.85) --
	( 43.37, 45.83) --
	( 43.41, 45.83) --
	( 43.41, 46.80) --
	( 43.43, 46.80) --
	( 43.43, 47.78) --
	( 43.44, 47.78) --
	( 43.44, 48.76) --
	( 43.44, 48.76) --
	( 43.44, 49.73) --
	( 43.45, 49.73) --
	( 43.45, 50.71) --
	( 43.54, 50.71) --
	( 43.54, 51.69) --
	( 43.66, 51.69) --
	( 43.66, 52.66) --
	( 43.68, 52.66) --
	( 43.68, 53.64) --
	( 44.48, 53.64) --
	( 44.48, 54.61) --
	( 44.81, 54.61) --
	( 44.81, 55.59) --
	( 45.05, 55.59) --
	( 45.05, 56.57) --
	( 45.38, 56.57) --
	( 45.38, 57.54) --
	( 45.69, 57.54) --
	( 45.69, 58.52) --
	( 46.24, 58.52) --
	( 46.24, 59.49) --
	( 46.25, 59.49) --
	( 46.25, 60.47) --
	( 46.42, 60.47) --
	( 46.42, 61.45) --
	( 47.53, 61.45) --
	( 47.53, 62.42) --
	( 48.14, 62.42) --
	( 48.14, 63.40) --
	( 49.32, 63.40) --
	( 49.32, 64.38) --
	( 49.38, 64.38) --
	( 49.38, 65.35) --
	( 50.42, 65.35) --
	( 50.42, 66.33) --
	( 50.68, 66.33) --
	( 50.68, 67.30) --
	( 50.77, 67.30) --
	( 50.77, 68.28) --
	( 53.64, 68.28) --
	( 53.64, 69.26) --
	( 54.93, 69.26) --
	( 54.93, 70.23) --
	( 56.04, 70.23) --
	( 56.04, 71.21) --
	( 56.89, 71.21) --
	( 56.89, 72.18) --
	( 58.21, 72.18) --
	( 58.21, 73.16) --
	( 58.50, 73.16) --
	( 58.50, 74.14) --
	( 61.40, 74.14) --
	( 61.40, 75.11) --
	( 73.07, 75.11) --
	( 73.07, 76.09) --
	( 73.35, 76.09) --
	( 73.35, 77.07) --
	( 73.85, 77.07) --
	( 73.85, 78.04) --
	( 75.69, 78.04) --
	( 75.69, 79.02) --
	( 78.47, 79.02) --
	( 78.47, 79.99) --
	( 78.50, 79.99) --
	( 78.50, 80.97) --
	( 95.48, 80.97) --
	( 95.48, 81.95) --
	( 95.98, 81.95) --
	( 95.98, 82.92) --
	( 96.40, 82.92) --
	( 96.40, 83.90) --
	(103.55, 83.90) --
	(103.55, 84.88) --
	(104.51, 84.88) --
	(104.51, 85.85) --
	(105.70, 85.85) --
	(105.70, 86.83) --
	(106.45, 86.83) --
	(106.45, 87.80) --
	(110.77, 87.80) --
	(110.77, 88.78) --
	(115.47, 88.78) --
	(115.47, 89.76) --
	(121.63, 89.76) --
	(121.63, 90.73) --
	(122.76, 90.73) --
	(122.76, 91.71) --
	(123.70, 91.71) --
	(123.70, 92.68) --
	(125.32, 92.68) --
	(125.32, 93.66) --
	(130.23, 93.66) --
	(130.23, 94.64) --
	(134.94, 94.64) --
	(134.94, 94.64);
\definecolor{drawColor}{RGB}{69,117,180}

\node[text=drawColor,anchor=base west,inner sep=0pt, outer sep=0pt, scale=  1.00] at ( 40, 120) {69/100};
\definecolor{drawColor}{RGB}{252,141,89}

\node[text=drawColor,anchor=base west,inner sep=0pt, outer sep=0pt, scale=  1.00] at ( 40, 110) {59/100};
\end{scope}
\begin{scope}
\path[clip] (  0.00,  0.00) rectangle (144.54,144.54);
\definecolor{drawColor}{gray}{0.30}

\end{scope}
\begin{scope}
\path[clip] (  0.00,  0.00) rectangle (144.54,144.54);
\definecolor{drawColor}{gray}{0.30}

\node[text=drawColor,anchor=base,inner sep=0pt, outer sep=0pt, scale=  1.00] at (132.38, 20.78) {7000};

\node[text=drawColor,anchor=base,inner sep=0pt, outer sep=0pt, scale=  1.00] at (106.82, 20.78) {5000};

\node[text=drawColor,anchor=base,inner sep=0pt, outer sep=0pt, scale=  1.00] at ( 81.27, 20.78) {3000};

\node[text=drawColor,anchor=base,inner sep=0pt, outer sep=0pt, scale=  1.00] at ( 55.71, 20.78) {1000};
\end{scope}
\begin{scope}
\path[clip] (  0.00,  0.00) rectangle (144.54,144.54);
\definecolor{drawColor}{RGB}{0,0,0}

\node[text=drawColor,anchor=base,inner sep=0pt, outer sep=0pt, scale=  1.00] at ( 88.93,  6.94) {Computation time (s)};
\end{scope}
\begin{scope}
\path[clip] (  0.00,  0.00) rectangle (144.54,144.54);
\definecolor{drawColor}{RGB}{0,0,0}

\end{scope}
\end{tikzpicture}}%
    \scalebox{1.0}{\begin{tikzpicture}[x=1pt,y=1pt]
\definecolor{fillColor}{RGB}{255,255,255}
\path[use as bounding box,fill=fillColor,fill opacity=0.00] (0,0) rectangle (108.54,144.54);
\begin{scope}
\path[clip] ( 38.33, 32.16) rectangle (139.54,139.54);
\definecolor{drawColor}{gray}{0.92}

\path[draw=drawColor,line width= 0.3pt,line join=round] ( 38.33, 49.25) --
	(139.54, 49.25);

\path[draw=drawColor,line width= 0.3pt,line join=round] ( 38.33, 73.65) --
	(139.54, 73.65);

\path[draw=drawColor,line width= 0.3pt,line join=round] ( 38.33, 98.05) --
	(139.54, 98.05);

\path[draw=drawColor,line width= 0.3pt,line join=round] ( 38.33,122.46) --
	(139.54,122.46);

\path[draw=drawColor,line width= 0.3pt,line join=round] (119.60, 32.16) --
	(119.60,139.54);

\path[draw=drawColor,line width= 0.3pt,line join=round] ( 94.05, 32.16) --
	( 94.05,139.54);

\path[draw=drawColor,line width= 0.3pt,line join=round] ( 68.49, 32.16) --
	( 68.49,139.54);

\path[draw=drawColor,line width= 0.5pt,line join=round] ( 38.33, 37.04) --
	(139.54, 37.04);

\path[draw=drawColor,line width= 0.5pt,line join=round] ( 38.33, 61.45) --
	(139.54, 61.45);

\path[draw=drawColor,line width= 0.5pt,line join=round] ( 38.33, 85.85) --
	(139.54, 85.85);

\path[draw=drawColor,line width= 0.5pt,line join=round] ( 38.33,110.26) --
	(139.54,110.26);

\path[draw=drawColor,line width= 0.5pt,line join=round] ( 38.33,134.66) --
	(139.54,134.66);

\path[draw=drawColor,line width= 0.5pt,line join=round] (132.38, 32.16) --
	(132.38,139.54);

\path[draw=drawColor,line width= 0.5pt,line join=round] (106.82, 32.16) --
	(106.82,139.54);

\path[draw=drawColor,line width= 0.5pt,line join=round] ( 81.27, 32.16) --
	( 81.27,139.54);

\path[draw=drawColor,line width= 0.5pt,line join=round] ( 55.71, 32.16) --
	( 55.71,139.54);
\definecolor{drawColor}{RGB}{69,117,180}

\path[draw=drawColor,line width= 1.4pt,line join=round] ( 42.93, 38.02) --
	( 42.93, 38.02) --
	( 42.93, 39.00) --
	( 42.93, 39.00) --
	( 42.93, 39.97) --
	( 42.93, 39.97) --
	( 42.93, 40.95) --
	( 42.93, 40.95) --
	( 42.93, 41.92) --
	( 42.93, 41.92) --
	( 42.93, 42.90) --
	( 42.93, 42.90) --
	( 42.93, 43.88) --
	( 42.93, 43.88) --
	( 42.93, 44.85) --
	( 42.93, 44.85) --
	( 42.93, 45.83) --
	( 42.93, 45.83) --
	( 42.93, 46.80) --
	( 42.93, 46.80) --
	( 42.93, 47.78) --
	( 42.93, 47.78) --
	( 42.93, 48.76) --
	( 42.93, 48.76) --
	( 42.93, 49.73) --
	( 42.93, 49.73) --
	( 42.93, 50.71) --
	( 42.93, 50.71) --
	( 42.93, 51.69) --
	( 42.93, 51.69) --
	( 42.93, 52.66) --
	( 42.93, 52.66) --
	( 42.93, 53.64) --
	( 42.93, 53.64) --
	( 42.93, 54.61) --
	( 42.93, 54.61) --
	( 42.93, 55.59) --
	( 42.93, 55.59) --
	( 42.93, 56.57) --
	( 42.93, 56.57) --
	( 42.93, 57.54) --
	( 42.93, 57.54) --
	( 42.93, 58.52) --
	( 42.93, 58.52) --
	( 42.93, 59.49) --
	( 42.93, 59.49) --
	( 42.93, 60.47) --
	( 42.93, 60.47) --
	( 42.93, 61.45) --
	( 42.93, 61.45) --
	( 42.93, 62.42) --
	( 42.93, 62.42) --
	( 42.93, 63.40) --
	( 42.93, 63.40) --
	( 42.93, 64.38) --
	( 42.93, 64.38) --
	( 42.93, 65.35) --
	( 42.93, 65.35) --
	( 42.93, 66.33) --
	( 42.93, 66.33) --
	( 42.93, 67.30) --
	( 42.93, 67.30) --
	( 42.93, 68.28) --
	( 42.93, 68.28) --
	( 42.93, 69.26) --
	( 42.93, 69.26) --
	( 42.93, 70.23) --
	( 42.93, 70.23) --
	( 42.93, 71.21) --
	( 42.93, 71.21) --
	( 42.93, 72.18) --
	( 42.93, 72.18) --
	( 42.93, 73.16) --
	( 42.93, 73.16) --
	( 42.93, 74.14) --
	( 42.93, 74.14) --
	( 42.93, 75.11) --
	( 42.93, 75.11) --
	( 42.93, 76.09) --
	( 42.93, 76.09) --
	( 42.93, 77.07) --
	( 42.93, 77.07) --
	( 42.93, 78.04) --
	( 42.93, 78.04) --
	( 42.93, 79.02) --
	( 42.94, 79.02) --
	( 42.94, 79.99) --
	( 42.94, 79.99) --
	( 42.94, 80.97) --
	( 42.94, 80.97) --
	( 42.94, 81.95) --
	( 42.94, 81.95) --
	( 42.94, 82.92) --
	( 42.94, 82.92) --
	( 42.94, 83.90) --
	( 42.94, 83.90) --
	( 42.94, 84.88) --
	( 42.94, 84.88) --
	( 42.94, 85.85) --
	( 42.94, 85.85) --
	( 42.94, 86.83) --
	( 42.94, 86.83) --
	( 42.94, 87.80) --
	( 42.94, 87.80) --
	( 42.94, 88.78) --
	( 42.94, 88.78) --
	( 42.94, 89.76) --
	( 42.94, 89.76) --
	( 42.94, 90.73) --
	( 42.95, 90.73) --
	( 42.95, 91.71) --
	( 42.95, 91.71) --
	( 42.95, 92.68) --
	( 42.95, 92.68) --
	( 42.95, 93.66) --
	( 42.96, 93.66) --
	( 42.96, 94.64) --
	( 42.96, 94.64) --
	( 42.96, 95.61) --
	( 42.96, 95.61) --
	( 42.96, 96.59) --
	( 42.97, 96.59) --
	( 42.97, 97.57) --
	( 42.98, 97.57) --
	( 42.98, 98.54) --
	( 42.98, 98.54) --
	( 42.98, 99.52) --
	( 42.99, 99.52) --
	( 42.99,100.49) --
	( 42.99,100.49) --
	( 42.99,101.47) --
	( 42.99,101.47) --
	( 42.99,102.45) --
	( 43.01,102.45) --
	( 43.01,103.42) --
	( 43.25,103.42) --
	( 43.25,104.40) --
	( 43.42,104.40) --
	( 43.42,105.37) --
	( 43.93,105.37) --
	( 43.93,106.35) --
	( 44.83,106.35) --
	( 44.83,107.33) --
	( 50.69,107.33) --
	( 50.69,108.30) --
	( 51.89,108.30) --
	( 51.89,109.28) --
	( 58.58,109.28) --
	( 58.58,110.26) --
	( 58.90,110.26) --
	( 58.90,111.23) --
	(100.79,111.23) --
	(100.79,112.21) --
	(109.14,112.21) --
	(109.14,113.18) --
	(134.94,113.18) --
	(134.94,113.18);
\definecolor{drawColor}{RGB}{252,141,89}

\path[draw=drawColor,line width= 1.4pt,dash pattern=on 2pt off 2pt ,line join=round] ( 42.93, 38.02) --
	( 42.93, 38.02) --
	( 42.93, 39.00) --
	( 42.93, 39.00) --
	( 42.93, 39.97) --
	( 42.93, 39.97) --
	( 42.93, 40.95) --
	( 42.93, 40.95) --
	( 42.93, 41.92) --
	( 42.93, 41.92) --
	( 42.93, 42.90) --
	( 42.93, 42.90) --
	( 42.93, 43.88) --
	( 42.93, 43.88) --
	( 42.93, 44.85) --
	( 42.93, 44.85) --
	( 42.93, 45.83) --
	( 42.93, 45.83) --
	( 42.93, 46.80) --
	( 42.93, 46.80) --
	( 42.93, 47.78) --
	( 42.93, 47.78) --
	( 42.93, 48.76) --
	( 42.93, 48.76) --
	( 42.93, 49.73) --
	( 42.93, 49.73) --
	( 42.93, 50.71) --
	( 42.93, 50.71) --
	( 42.93, 51.69) --
	( 42.93, 51.69) --
	( 42.93, 52.66) --
	( 42.93, 52.66) --
	( 42.93, 53.64) --
	( 42.93, 53.64) --
	( 42.93, 54.61) --
	( 42.93, 54.61) --
	( 42.93, 55.59) --
	( 42.93, 55.59) --
	( 42.93, 56.57) --
	( 42.93, 56.57) --
	( 42.93, 57.54) --
	( 42.93, 57.54) --
	( 42.93, 58.52) --
	( 42.93, 58.52) --
	( 42.93, 59.49) --
	( 42.93, 59.49) --
	( 42.93, 60.47) --
	( 42.93, 60.47) --
	( 42.93, 61.45) --
	( 42.93, 61.45) --
	( 42.93, 62.42) --
	( 42.93, 62.42) --
	( 42.93, 63.40) --
	( 42.93, 63.40) --
	( 42.93, 64.38) --
	( 42.93, 64.38) --
	( 42.93, 65.35) --
	( 42.93, 65.35) --
	( 42.93, 66.33) --
	( 42.93, 66.33) --
	( 42.93, 67.30) --
	( 42.93, 67.30) --
	( 42.93, 68.28) --
	( 42.93, 68.28) --
	( 42.93, 69.26) --
	( 42.93, 69.26) --
	( 42.93, 70.23) --
	( 42.93, 70.23) --
	( 42.93, 71.21) --
	( 42.93, 71.21) --
	( 42.93, 72.18) --
	( 42.93, 72.18) --
	( 42.93, 73.16) --
	( 42.93, 73.16) --
	( 42.93, 74.14) --
	( 42.93, 74.14) --
	( 42.93, 75.11) --
	( 42.93, 75.11) --
	( 42.93, 76.09) --
	( 42.93, 76.09) --
	( 42.93, 77.07) --
	( 42.93, 77.07) --
	( 42.93, 78.04) --
	( 42.93, 78.04) --
	( 42.93, 79.02) --
	( 42.93, 79.02) --
	( 42.93, 79.99) --
	( 42.93, 79.99) --
	( 42.93, 80.97) --
	( 42.93, 80.97) --
	( 42.93, 81.95) --
	( 42.93, 81.95) --
	( 42.93, 82.92) --
	( 42.93, 82.92) --
	( 42.93, 83.90) --
	( 42.93, 83.90) --
	( 42.93, 84.88) --
	( 42.94, 84.88) --
	( 42.94, 85.85) --
	( 42.94, 85.85) --
	( 42.94, 86.83) --
	( 42.94, 86.83) --
	( 42.94, 87.80) --
	( 42.94, 87.80) --
	( 42.94, 88.78) --
	( 42.94, 88.78) --
	( 42.94, 89.76) --
	( 42.94, 89.76) --
	( 42.94, 90.73) --
	( 42.94, 90.73) --
	( 42.94, 91.71) --
	( 42.94, 91.71) --
	( 42.94, 92.68) --
	( 42.94, 92.68) --
	( 42.94, 93.66) --
	( 42.94, 93.66) --
	( 42.94, 94.64) --
	( 42.94, 94.64) --
	( 42.94, 95.61) --
	( 42.94, 95.61) --
	( 42.94, 96.59) --
	( 42.94, 96.59) --
	( 42.94, 97.57) --
	( 42.94, 97.57) --
	( 42.94, 98.54) --
	( 42.94, 98.54) --
	( 42.94, 99.52) --
	( 42.95, 99.52) --
	( 42.95,100.49) --
	( 42.95,100.49) --
	( 42.95,101.47) --
	( 42.95,101.47) --
	( 42.95,102.45) --
	( 42.96,102.45) --
	( 42.96,103.42) --
	( 43.01,103.42) --
	( 43.01,104.40) --
	( 43.15,104.40) --
	( 43.15,105.37) --
	( 43.24,105.37) --
	( 43.24,106.35) --
	( 43.29,106.35) --
	( 43.29,107.33) --
	( 43.32,107.33) --
	( 43.32,108.30) --
	( 43.47,108.30) --
	( 43.47,109.28) --
	( 45.18,109.28) --
	( 45.18,110.26) --
	( 45.36,110.26) --
	( 45.36,111.23) --
	( 46.26,111.23) --
	( 46.26,112.21) --
	( 47.59,112.21) --
	( 47.59,113.18) --
	( 48.60,113.18) --
	( 48.60,114.16) --
	( 48.62,114.16) --
	( 48.62,115.14) --
	( 49.10,115.14) --
	( 49.10,116.11) --
	( 53.33,116.11) --
	( 53.33,117.09) --
	( 55.16,117.09) --
	( 55.16,118.06) --
	( 55.94,118.06) --
	( 55.94,119.04) --
	( 57.45,119.04) --
	( 57.45,120.02) --
	( 67.35,120.02) --
	( 67.35,120.99) --
	( 74.87,120.99) --
	( 74.87,121.97) --
	( 77.98,121.97) --
	( 77.98,122.95) --
	(134.94,122.95) --
	(134.94,122.95);
\definecolor{drawColor}{RGB}{69,117,180}

\node[text=drawColor,anchor=base west,inner sep=0pt, outer sep=0pt, scale=  1.00] at ( 100,50) {78/100};
\definecolor{drawColor}{RGB}{252,141,89}

\node[text=drawColor,anchor=base west,inner sep=0pt, outer sep=0pt, scale=  1.00] at ( 100,40) {88/100};
\end{scope}
\begin{scope}
\path[clip] (  0.00,  0.00) rectangle (144.54,144.54);
\definecolor{drawColor}{gray}{0.30}

\end{scope}
\begin{scope}
\path[clip] (  0.00,  0.00) rectangle (144.54,144.54);
\definecolor{drawColor}{gray}{0.30}

\node[text=drawColor,anchor=base,inner sep=0pt, outer sep=0pt, scale=  1.00] at (132.38, 20.78) {7000};

\node[text=drawColor,anchor=base,inner sep=0pt, outer sep=0pt, scale=  1.00] at (106.82, 20.78) {5000};

\node[text=drawColor,anchor=base,inner sep=0pt, outer sep=0pt, scale=  1.00] at ( 81.27, 20.78) {3000};

\node[text=drawColor,anchor=base,inner sep=0pt, outer sep=0pt, scale=  1.00] at ( 55.71, 20.78) {1000};
\end{scope}
\begin{scope}
\path[clip] (  0.00,  0.00) rectangle (144.54,144.54);
\definecolor{drawColor}{RGB}{0,0,0}

\end{scope}
\begin{scope}
\path[clip] (  0.00,  0.00) rectangle (144.54,144.54);
\definecolor{drawColor}{RGB}{0,0,0}

\end{scope}
\end{tikzpicture}}%
  } \caption{ECDF of computation time for the shortest-path problem (left), the
  knapsack problem (middle), and the portfolio selection problem (right) with
  continuous knapsack uncertainty sets. The numbers in the legend indicate the
  number of instances solved within the time limit of 2 hours out of 100
  instances. Solid \blue{blue}: \blue{bilevel approach}. Dotted \orange{orange}:
  \orange{robust approach}.}
  \label{fig:num-results:cont:knapsack:ecdf-time}
\end{figure}

In Figure~\ref{fig:num-results:cont:knapsack:ecdf-time}, we depict the ECDF of
computation times for all three applications with the continuous knapsack
uncertainty set. Again, the shortest-path problem is shown on the left, the
knapsack problem in the middle, and the portfolio selection problem on the
right. We see that the bilevel approach performs better compared to the budgeted
uncertainty case shown in the previous section. For the knapsack problem, the
bilevel approach is even able to solve $10$ more instances than the robust
approach. However, for the portfolio problem, the robust approach still
outperforms the bilevel approach by solving $10$ more instances. For the
shortest-path problem, both approaches are able to solve all the instances from
the test set. The ECDF however shows that the robust approach is significantly
faster. The ECDF of the number of branch-and-bound nodes depicted in
Figure~\ref{fig:num-results:cont:ecdf-nodes} \rev{in
Appendix~\ref{sec:appendix:plots}} confirms this observation. For the knapsack
instances, the bilevel approach tends to produce smaller search trees than the
robust approach, while for the portfolio problem, the search trees are of
similar size. For the shortest-path problem, the robust approach solves all
instances at the root node, while the bilevel approach produces larger search
trees for some instances. As in the case of the budgeted uncertainty set,
increasing the instance sizes impacts the computation time for both approaches.
As an example, both approaches are able to solve all 10 knapsack instances with
$3000$~items, whereas the robust approach only solves $2$ instances with
$9000$~items and the bilevel approach is not able to solve any instance with
more than $8000$~items within the time limit. Similar observations can again be
made for the other two applications; see Appendix~\ref{sec:appendix:plots}.

\FloatBarrier

\subsubsection{Conclusion}
\label{sec:num-results:cont:conclusion}
For continuous uncertainty sets, where the DDRO problem can be reformulated as a
single-level optimization problem using strong duality, the classic robust
approach generally outperforms the bilevel approach in terms of computation
time. However, for the continuous knapsack uncertainty set, the difference
between the two approaches is substantially smaller than for the continuous
budgeted uncertainty set. In particular, for the knapsack problem, the bilevel
approach is even able to solve more instances than the robust approach.
This behavior can be explained by the fact that the additional primal
lower-level constraints in the reformulation of the bilevel problem are
significantly less in case of knapsack uncertainty than in case of budgeted
uncertainty. The knapsack uncertainty set only introduces one additional primal
lower-level constraint, while the budgeted uncertainty set introduces one budget
constraint and one additional constraint for each uncertain parameter.

In conclusion, the bilevel approach can be a competitive alternative to the
classic robust approach for uncertainty sets with a small number of constraints,
while for uncertainty sets with a large number of constraints, the classic
robust approach is generally superior.

\subsection{Numerical Results for Discrete Uncertainty Sets}
\label{sec:num-results:discrete}

In this section, we study the discrete versions of the budgeted and the knapsack
uncertainty sets for the shortest-path problem and the knapsack problem.

To this end, we solve the bilevel reformulation using the mixed-integer
linear bilevel solver \textsf{MibS} \parencite{mibs}.
We compare these results to those obtained by using the quantified integer
programming solver~\textsf{Yasol} \parencite{yasol2017}.
We do not present numerical results for the portfolio selection
problem, since quadratic constraints can neither be handled by \textsf{MibS}
nor by \textsf{Yasol}.

\subsubsection{Budgeted Uncertainty}
\label{sec:num-results:discrete:budgeted}
We consider the discrete version of the budgeted uncertainty set for the
shortest-path problem and the knapsack problem. The complete derivation of the
reformulations can be found in
Appendix~\ref{sec:appendix:applications:shortest-path:budgeted} for the
shortest-path problem and in
Appendix~\ref{sec:appendix:applications:knapsack:budgeted} for the knapsack
problem. For the QIP reformulations, we refer to Appendix~\ref{sec:qip-models}.
Note that the budgeted uncertainty set $U^{\text{db}}$ is given by
interdiction-like constraints. However, \textsf{MibS} is not able to recognize
this structure in our reformulation and thus cannot apply specialized methods
for solving interdiction problems that would most likely improve the
performance.

\begin{figure}
  \centering
  \makebox[\textwidth][c]{%
    \scalebox{1.0}{\input{plots/optl_rev/sp_disc_budg_ecdf_time.tex}}%
    \scalebox{1.0}{\begin{tikzpicture}[x=1pt,y=1pt]
\definecolor{fillColor}{RGB}{255,255,255}
\path[use as bounding box,fill=fillColor,fill opacity=0.00] (0,0) rectangle (144.54,144.54);
\begin{scope}
\path[clip] ( 38.33, 32.16) rectangle (139.54,139.54);
\definecolor{drawColor}{gray}{0.92}

\path[draw=drawColor,line width= 0.3pt,line join=round] ( 38.33, 49.25) --
	(139.54, 49.25);

\path[draw=drawColor,line width= 0.3pt,line join=round] ( 38.33, 73.65) --
	(139.54, 73.65);

\path[draw=drawColor,line width= 0.3pt,line join=round] ( 38.33, 98.05) --
	(139.54, 98.05);

\path[draw=drawColor,line width= 0.3pt,line join=round] ( 38.33,122.46) --
	(139.54,122.46);

\path[draw=drawColor,line width= 0.3pt,line join=round] (119.60, 32.16) --
	(119.60,139.54);

\path[draw=drawColor,line width= 0.3pt,line join=round] ( 94.05, 32.16) --
	( 94.05,139.54);

\path[draw=drawColor,line width= 0.3pt,line join=round] ( 68.49, 32.16) --
	( 68.49,139.54);

\path[draw=drawColor,line width= 0.5pt,line join=round] ( 38.33, 37.04) --
	(139.54, 37.04);

\path[draw=drawColor,line width= 0.5pt,line join=round] ( 38.33, 61.45) --
	(139.54, 61.45);

\path[draw=drawColor,line width= 0.5pt,line join=round] ( 38.33, 85.85) --
	(139.54, 85.85);

\path[draw=drawColor,line width= 0.5pt,line join=round] ( 38.33,110.26) --
	(139.54,110.26);

\path[draw=drawColor,line width= 0.5pt,line join=round] ( 38.33,134.66) --
	(139.54,134.66);

\path[draw=drawColor,line width= 0.5pt,line join=round] (132.38, 32.16) --
	(132.38,139.54);

\path[draw=drawColor,line width= 0.5pt,line join=round] (106.82, 32.16) --
	(106.82,139.54);

\path[draw=drawColor,line width= 0.5pt,line join=round] ( 81.27, 32.16) --
	( 81.27,139.54);

\path[draw=drawColor,line width= 0.5pt,line join=round] ( 55.71, 32.16) --
	( 55.71,139.54);
\definecolor{drawColor}{RGB}{69,117,180}

\path[draw=drawColor,line width= 1.4pt,line join=round] ( 42.93, 41.92) --
	( 42.93, 41.92) --
	( 42.93, 45.83) --
	( 42.93, 45.83) --
	( 42.93, 47.78) --
	( 42.93, 47.78) --
	( 42.93, 48.76) --
	( 42.93, 48.76) --
	( 42.93, 50.71) --
	( 42.93, 50.71) --
	( 42.93, 52.66) --
	( 42.93, 52.66) --
	( 42.93, 53.64) --
	( 42.95, 53.64) --
	( 42.95, 54.61) --
	( 42.95, 54.61) --
	( 42.95, 55.59) --
	( 42.97, 55.59) --
	( 42.97, 56.57) --
	( 42.97, 56.57) --
	( 42.97, 57.54) --
	( 43.10, 57.54) --
	( 43.10, 58.52) --
	( 43.14, 58.52) --
	( 43.14, 59.49) --
	( 43.70, 59.49) --
	( 43.70, 60.47) --
	( 43.78, 60.47) --
	( 43.78, 61.45) --
	( 43.89, 61.45) --
	( 43.89, 62.42) --
	( 44.11, 62.42) --
	( 44.11, 63.40) --
	( 44.94, 63.40) --
	( 44.94, 64.38) --
	( 45.69, 64.38) --
	( 45.69, 65.35) --
	( 46.03, 65.35) --
	( 46.03, 66.33) --
	( 50.40, 66.33) --
	( 50.40, 67.30) --
	( 57.15, 67.30) --
	( 57.15, 68.28) --
	( 75.24, 68.28) --
	( 75.24, 69.26) --
	( 75.55, 69.26) --
	( 75.55, 70.23) --
	( 80.42, 70.23) --
	( 80.42, 71.21) --
	( 81.51, 71.21) --
	( 81.51, 72.18) --
	(134.94, 72.18) --
	(134.94, 72.18);
\definecolor{drawColor}{RGB}{252,141,89}

\path[draw=drawColor,line width= 1.4pt,dash pattern=on 2pt off 2pt ,line join=round] ( 42.94, 38.02) --
	( 42.94, 38.02) --
	( 42.94, 39.00) --
	( 42.94, 39.00) --
	( 42.94, 39.97) --
	( 42.94, 39.97) --
	( 42.94, 40.95) --
	( 42.94, 40.95) --
	( 42.94, 41.92) --
	( 42.94, 41.92) --
	( 42.94, 42.90) --
	( 42.94, 42.90) --
	( 42.94, 43.88) --
	( 42.94, 43.88) --
	( 42.94, 44.85) --
	( 42.94, 44.85) --
	( 42.94, 45.83) --
	( 42.94, 45.83) --
	( 42.94, 46.80) --
	( 42.94, 46.80) --
	( 42.94, 47.78) --
	( 42.94, 47.78) --
	( 42.94, 48.76) --
	( 42.94, 48.76) --
	( 42.94, 49.73) --
	( 42.95, 49.73) --
	( 42.95, 50.71) --
	( 42.95, 50.71) --
	( 42.95, 51.69) --
	( 42.95, 51.69) --
	( 42.95, 52.66) --
	( 43.05, 52.66) --
	( 43.05, 53.64) --
	( 43.10, 53.64) --
	( 43.10, 54.61) --
	( 43.13, 54.61) --
	( 43.13, 55.59) --
	( 43.17, 55.59) --
	( 43.17, 56.57) --
	( 43.49, 56.57) --
	( 43.49, 57.54) --
	( 44.86, 57.54) --
	( 44.86, 58.52) --
	( 45.36, 58.52) --
	( 45.36, 59.49) --
	( 71.92, 59.49) --
	( 71.92, 60.47) --
	( 79.69, 60.47) --
	( 79.69, 61.45) --
	(134.94, 61.45) --
	(134.94, 61.45);
\definecolor{drawColor}{RGB}{215,25,28}

\path[draw=drawColor,line width= 1.4pt,dash pattern=on 4pt off 2pt ,line join=round] ( 42.94, 38.02) --
	( 42.94, 38.02) --
	( 42.94, 39.00) --
	( 42.94, 39.00) --
	( 42.94, 39.97) --
	( 42.94, 39.97) --
	( 42.94, 40.95) --
	( 42.94, 40.95) --
	( 42.94, 41.92) --
	( 42.94, 41.92) --
	( 42.94, 42.90) --
	( 42.94, 42.90) --
	( 42.94, 43.88) --
	( 42.95, 43.88) --
	( 42.95, 44.85) --
	( 42.95, 44.85) --
	( 42.95, 45.83) --
	( 42.95, 45.83) --
	( 42.95, 46.80) --
	( 42.95, 46.80) --
	( 42.95, 47.78) --
	( 42.96, 47.78) --
	( 42.96, 48.76) --
	( 43.01, 48.76) --
	( 43.01, 49.73) --
	( 43.15, 49.73) --
	( 43.15, 50.71) --
	( 43.22, 50.71) --
	( 43.22, 51.69) --
	( 52.29, 51.69) --
	( 52.29, 52.66) --
	(116.21, 52.66) --
	(116.21, 53.64) --
	(134.94, 53.64) --
	(134.94, 53.64);
\definecolor{drawColor}{RGB}{69,117,180}

\node[text=drawColor,anchor=base west,inner sep=0pt, outer sep=0pt, scale=  1.00] at ( 40,110) {36/100};
\definecolor{drawColor}{RGB}{252,141,89}

\node[text=drawColor,anchor=base west,inner sep=0pt, outer sep=0pt, scale=  1.00] at ( 40,100) {25/100};
\definecolor{drawColor}{RGB}{215,25,28}

\node[text=drawColor,anchor=base west,inner sep=0pt, outer sep=0pt, scale=  1.00] at ( 40,90) {17/100};
\end{scope}
\begin{scope}
\path[clip] (  0.00,  0.00) rectangle (144.54,144.54);
\definecolor{drawColor}{gray}{0.30}

\node[text=drawColor,anchor=base east,inner sep=0pt, outer sep=0pt, scale=  1.00] at ( 33.83, 33.60) {0};

\node[text=drawColor,anchor=base east,inner sep=0pt, outer sep=0pt, scale=  1.00] at ( 33.83, 58.00) {25};

\node[text=drawColor,anchor=base east,inner sep=0pt, outer sep=0pt, scale=  1.00] at ( 33.83, 82.41) {50};

\node[text=drawColor,anchor=base east,inner sep=0pt, outer sep=0pt, scale=  1.00] at ( 33.83,106.81) {75};

\node[text=drawColor,anchor=base east,inner sep=0pt, outer sep=0pt, scale=  1.00] at ( 33.83,131.22) {100};
\end{scope}
\begin{scope}
\path[clip] (  0.00,  0.00) rectangle (144.54,144.54);
\definecolor{drawColor}{gray}{0.30}

\node[text=drawColor,anchor=base,inner sep=0pt, outer sep=0pt, scale=  1.00] at (132.38, 20.78) {7000};

\node[text=drawColor,anchor=base,inner sep=0pt, outer sep=0pt, scale=  1.00] at (106.82, 20.78) {5000};

\node[text=drawColor,anchor=base,inner sep=0pt, outer sep=0pt, scale=  1.00] at ( 81.27, 20.78) {3000};

\node[text=drawColor,anchor=base,inner sep=0pt, outer sep=0pt, scale=  1.00] at ( 55.71, 20.78) {1000};
\end{scope}
\begin{scope}
\path[clip] (  0.00,  0.00) rectangle (144.54,144.54);
\definecolor{drawColor}{RGB}{0,0,0}

\node[text=drawColor,anchor=base,inner sep=0pt, outer sep=0pt, scale=  1.00] at ( 88.93,  6.94) {Computation time (s)};
\end{scope}
\end{tikzpicture}}%
  } \caption{ECDF of computation time for the shortest-path problem (left) and
  the knapsack problem (right) with discrete budgeted uncertainty sets. The
  numbers in the legend indicate the number of instances solved within the time
  limit of 2 hours out of 100 instances. Solid \blue{blue}: \blue{\textsf{MibS}}.
  Dotted \orange{orange}: \orange{\textsf{qip-existeval}}. Dashed \red{red}: \red{\textsf{qip-bilevel}}.}
  \label{fig:num-results:discrete:budgeted:ecdf-time}
\end{figure}

Figure~\ref{fig:num-results:discrete:budgeted:ecdf-time} depicts the ECDF of
computation times for the shortest-path problem on the left and the knapsack
problem on the right. For the shortest-path problem, \textsf{MibS} is able to
solve $\rev{95}$ instances and thus outperforms both QIP-based approaches,
solving only $\rev{29}$ and $\rev{32}$ instances using the \textsf{Yasol}
solver, respectively. \rev{For the knapsack instances, \textsf{MibS} again
performs best, although the advantage is smaller than for the shortest-path
problem. It solves 36 instances, while \textsf{Yasol} solves 25 and 17 instances
for the two reformulations, respectively.} Notably, the \textsf{qip-bilevel}
reformulation is better than the \textsf{qip-existeval} reformulation for the
shortest-path problem, while it is worse for the knapsack problem. The ECDF of
the number of branch-and-bound nodes depicted in
Figure~\ref{fig:num-results:discrete:ecdf-nodes} \rev{in
Appendix~\ref{sec:appendix:plots}} confirms these results, showing that
\textsf{MibS} produces smaller search trees than the QIP-based approaches.

Compared to the continuous case, the instance sizes are significantly smaller.
The largest solved knapsack instance contains $\rev{50}$ items for the bilevel
approach, while the largest solved shortest-path instance contains \rev{$20$}
nodes for \textsf{MibS}. \textsf{Yasol} is only able to solve shortest-path
instances with at most $\rev{8}$ nodes in the graph. For more details, see
Appendix~\ref{sec:appendix:plots}. While especially the sizes of the
shortest-path instances seem to be small, we highlight that the number of
variables as well as the number of constraints are in the order of $|V|^2$. For
example, the largest instances which can be solved with \textsf{MibS} within the
time limit have ``only'' \rev{20} nodes but the bilevel reformulation has
\rev{608} upper- and lower-level variables as well as \rev{40} and \rev{1217}
upper- and lower-level constraints, respectively. We recall that the large
number of constraints in the lower level are due to the McCormick inequalities
used to linearize the products between the upper- and the lower-level variables;
see Appendix~\ref{sec:appendix:mccormick}.

\FloatBarrier

\subsubsection{Knapsack Uncertainty}
\label{sec:num-results:discrete:knapsack}
For the last set of numerical experiments, we consider the discrete version of
the knapsack uncertainty set for the shortest-path problem and the knapsack
problem. The complete derivation of the reformulations can be found in
Appendix~\ref{sec:appendix:applications:shortest-path:knapsack} for the
shortest-path problem and in
Appendix~\ref{sec:appendix:applications:knapsack:knapsack} for the knapsack
problem. For the QIP reformulations, we again refer to
Appendix~\ref{sec:qip-models}.

\begin{figure}
  \centering
  \makebox[\textwidth][c]{%
    \scalebox{1.0}{\input{plots/optl_rev/sp_disc_knap_ecdf_time.tex}}%
    \scalebox{1.0}{\begin{tikzpicture}[x=1pt,y=1pt]
\definecolor{fillColor}{RGB}{255,255,255}
\path[use as bounding box,fill=fillColor,fill opacity=0.00] (0,0) rectangle (144.54,144.54);
\begin{scope}
\path[clip] ( 38.33, 32.16) rectangle (139.54,139.54);
\definecolor{drawColor}{gray}{0.92}

\path[draw=drawColor,line width= 0.3pt,line join=round] ( 38.33, 49.25) --
	(139.54, 49.25);

\path[draw=drawColor,line width= 0.3pt,line join=round] ( 38.33, 73.65) --
	(139.54, 73.65);

\path[draw=drawColor,line width= 0.3pt,line join=round] ( 38.33, 98.05) --
	(139.54, 98.05);

\path[draw=drawColor,line width= 0.3pt,line join=round] ( 38.33,122.46) --
	(139.54,122.46);

\path[draw=drawColor,line width= 0.3pt,line join=round] (119.60, 32.16) --
	(119.60,139.54);

\path[draw=drawColor,line width= 0.3pt,line join=round] ( 94.05, 32.16) --
	( 94.05,139.54);

\path[draw=drawColor,line width= 0.3pt,line join=round] ( 68.49, 32.16) --
	( 68.49,139.54);

\path[draw=drawColor,line width= 0.5pt,line join=round] ( 38.33, 37.04) --
	(139.54, 37.04);

\path[draw=drawColor,line width= 0.5pt,line join=round] ( 38.33, 61.45) --
	(139.54, 61.45);

\path[draw=drawColor,line width= 0.5pt,line join=round] ( 38.33, 85.85) --
	(139.54, 85.85);

\path[draw=drawColor,line width= 0.5pt,line join=round] ( 38.33,110.26) --
	(139.54,110.26);

\path[draw=drawColor,line width= 0.5pt,line join=round] ( 38.33,134.66) --
	(139.54,134.66);

\path[draw=drawColor,line width= 0.5pt,line join=round] (132.38, 32.16) --
	(132.38,139.54);

\path[draw=drawColor,line width= 0.5pt,line join=round] (106.82, 32.16) --
	(106.82,139.54);

\path[draw=drawColor,line width= 0.5pt,line join=round] ( 81.27, 32.16) --
	( 81.27,139.54);

\path[draw=drawColor,line width= 0.5pt,line join=round] ( 55.71, 32.16) --
	( 55.71,139.54);
\definecolor{drawColor}{RGB}{69,117,180}

\path[draw=drawColor,line width= 1.4pt,line join=round] ( 42.93, 38.02) --
	( 42.93, 38.02) --
	( 42.93, 39.97) --
	( 42.93, 39.97) --
	( 42.93, 40.95) --
	( 42.93, 40.95) --
	( 42.93, 41.92) --
	( 42.93, 41.92) --
	( 42.93, 43.88) --
	( 42.93, 43.88) --
	( 42.93, 44.85) --
	( 42.93, 44.85) --
	( 42.93, 45.83) --
	( 42.93, 45.83) --
	( 42.93, 46.80) --
	( 42.93, 46.80) --
	( 42.93, 47.78) --
	( 42.93, 47.78) --
	( 42.93, 49.73) --
	( 42.93, 49.73) --
	( 42.93, 50.71) --
	( 42.93, 50.71) --
	( 42.93, 51.69) --
	( 42.93, 51.69) --
	( 42.93, 52.66) --
	( 42.94, 52.66) --
	( 42.94, 53.64) --
	( 42.94, 53.64) --
	( 42.94, 54.61) --
	( 42.94, 54.61) --
	( 42.94, 55.59) --
	( 42.94, 55.59) --
	( 42.94, 56.57) --
	( 42.94, 56.57) --
	( 42.94, 57.54) --
	( 42.94, 57.54) --
	( 42.94, 58.52) --
	( 42.95, 58.52) --
	( 42.95, 59.49) --
	( 42.95, 59.49) --
	( 42.95, 60.47) --
	( 42.95, 60.47) --
	( 42.95, 61.45) --
	( 42.95, 61.45) --
	( 42.95, 62.42) --
	( 42.95, 62.42) --
	( 42.95, 63.40) --
	( 42.96, 63.40) --
	( 42.96, 64.38) --
	( 42.96, 64.38) --
	( 42.96, 65.35) --
	( 42.97, 65.35) --
	( 42.97, 66.33) --
	( 42.97, 66.33) --
	( 42.97, 67.30) --
	( 42.98, 67.30) --
	( 42.98, 68.28) --
	( 43.00, 68.28) --
	( 43.00, 69.26) --
	( 43.01, 69.26) --
	( 43.01, 70.23) --
	( 43.02, 70.23) --
	( 43.02, 71.21) --
	( 43.02, 71.21) --
	( 43.02, 72.18) --
	( 43.03, 72.18) --
	( 43.03, 73.16) --
	( 43.05, 73.16) --
	( 43.05, 74.14) --
	( 43.09, 74.14) --
	( 43.09, 75.11) --
	( 43.14, 75.11) --
	( 43.14, 76.09) --
	( 43.20, 76.09) --
	( 43.20, 77.07) --
	( 43.23, 77.07) --
	( 43.23, 78.04) --
	( 43.25, 78.04) --
	( 43.25, 79.02) --
	( 43.27, 79.02) --
	( 43.27, 79.99) --
	( 43.31, 79.99) --
	( 43.31, 80.97) --
	( 43.36, 80.97) --
	( 43.36, 81.95) --
	( 43.41, 81.95) --
	( 43.41, 82.92) --
	( 43.44, 82.92) --
	( 43.44, 83.90) --
	( 43.46, 83.90) --
	( 43.46, 84.88) --
	( 43.46, 84.88) --
	( 43.46, 85.85) --
	( 43.53, 85.85) --
	( 43.53, 86.83) --
	( 43.59, 86.83) --
	( 43.59, 87.80) --
	( 44.09, 87.80) --
	( 44.09, 88.78) --
	( 44.39, 88.78) --
	( 44.39, 89.76) --
	( 44.58, 89.76) --
	( 44.58, 91.71) --
	( 44.65, 91.71) --
	( 44.65, 92.68) --
	( 45.59, 92.68) --
	( 45.59, 93.66) --
	( 45.71, 93.66) --
	( 45.71, 94.64) --
	( 45.88, 94.64) --
	( 45.88, 95.61) --
	( 47.86, 95.61) --
	( 47.86, 96.59) --
	( 47.88, 96.59) --
	( 47.88, 97.57) --
	( 48.65, 97.57) --
	( 48.65, 98.54) --
	( 48.73, 98.54) --
	( 48.73, 99.52) --
	( 49.36, 99.52) --
	( 49.36,100.49) --
	( 49.63,100.49) --
	( 49.63,101.47) --
	( 50.20,101.47) --
	( 50.20,102.45) --
	( 50.42,102.45) --
	( 50.42,103.42) --
	( 55.82,103.42) --
	( 55.82,104.40) --
	( 58.67,104.40) --
	( 58.67,105.37) --
	( 63.01,105.37) --
	( 63.01,106.35) --
	( 74.18,106.35) --
	( 74.18,107.33) --
	( 74.41,107.33) --
	( 74.41,108.30) --
	( 76.73,108.30) --
	( 76.73,109.28) --
	( 83.10,109.28) --
	( 83.10,110.26) --
	( 93.67,110.26) --
	( 93.67,111.23) --
	(100.48,111.23) --
	(100.48,112.21) --
	(106.87,112.21) --
	(106.87,113.18) --
	(132.95,113.18) --
	(132.95,114.16) --
	(134.94,114.16) --
	(134.94,114.16);
\definecolor{drawColor}{RGB}{252,141,89}

\path[draw=drawColor,line width= 1.4pt,dash pattern=on 2pt off 2pt ,line join=round] ( 42.96, 38.02) --
	( 42.98, 38.02) --
	( 42.98, 39.00) --
	( 43.02, 39.00) --
	( 43.02, 39.97) --
	( 43.02, 39.97) --
	( 43.02, 40.95) --
	( 43.08, 40.95) --
	( 43.08, 41.92) --
	( 43.10, 41.92) --
	( 43.10, 42.90) --
	( 43.13, 42.90) --
	( 43.13, 43.88) --
	( 43.15, 43.88) --
	( 43.15, 44.85) --
	( 43.19, 44.85) --
	( 43.19, 45.83) --
	( 43.19, 45.83) --
	( 43.19, 46.80) --
	( 43.22, 46.80) --
	( 43.22, 47.78) --
	( 43.33, 47.78) --
	( 43.33, 48.76) --
	( 43.36, 48.76) --
	( 43.36, 49.73) --
	( 43.37, 49.73) --
	( 43.37, 50.71) --
	( 43.38, 50.71) --
	( 43.38, 51.69) --
	( 43.54, 51.69) --
	( 43.54, 52.66) --
	( 43.55, 52.66) --
	( 43.55, 53.64) --
	( 43.81, 53.64) --
	( 43.81, 54.61) --
	( 43.86, 54.61) --
	( 43.86, 55.59) --
	( 44.14, 55.59) --
	( 44.14, 56.57) --
	( 44.15, 56.57) --
	( 44.15, 57.54) --
	( 44.25, 57.54) --
	( 44.25, 58.52) --
	( 44.26, 58.52) --
	( 44.26, 59.49) --
	( 44.41, 59.49) --
	( 44.41, 60.47) --
	( 44.88, 60.47) --
	( 44.88, 61.45) --
	( 45.54, 61.45) --
	( 45.54, 62.42) --
	( 45.58, 62.42) --
	( 45.58, 63.40) --
	( 45.65, 63.40) --
	( 45.65, 64.38) --
	( 46.18, 64.38) --
	( 46.18, 65.35) --
	( 47.07, 65.35) --
	( 47.07, 66.33) --
	( 47.10, 66.33) --
	( 47.10, 67.30) --
	( 47.15, 67.30) --
	( 47.15, 68.28) --
	( 49.19, 68.28) --
	( 49.19, 69.26) --
	( 51.75, 69.26) --
	( 51.75, 70.23) --
	( 54.09, 70.23) --
	( 54.09, 71.21) --
	( 58.84, 71.21) --
	( 58.84, 72.18) --
	( 59.25, 72.18) --
	( 59.25, 73.16) --
	( 59.90, 73.16) --
	( 59.90, 74.14) --
	( 61.15, 74.14) --
	( 61.15, 75.11) --
	( 62.25, 75.11) --
	( 62.25, 76.09) --
	( 69.13, 76.09) --
	( 69.13, 77.07) --
	( 69.78, 77.07) --
	( 69.78, 78.04) --
	( 71.02, 78.04) --
	( 71.02, 79.02) --
	( 92.35, 79.02) --
	( 92.35, 79.99) --
	( 93.44, 79.99) --
	( 93.44, 80.97) --
	( 93.81, 80.97) --
	( 93.81, 81.95) --
	(106.01, 81.95) --
	(106.01, 82.92) --
	(120.57, 82.92) --
	(120.57, 83.90) --
	(133.95, 83.90) --
	(133.95, 84.88) --
	(134.94, 84.88) --
	(134.94, 84.88);
\definecolor{drawColor}{RGB}{215,25,28}

\path[draw=drawColor,line width= 1.4pt,dash pattern=on 4pt off 2pt ,line join=round] ( 43.04, 38.02) --
	( 44.99, 38.02) --
	( 44.99, 39.00) --
	( 49.87, 39.00) --
	( 49.87, 39.97) --
	( 53.22, 39.97) --
	( 53.22, 40.95) --
	( 61.88, 40.95) --
	( 61.88, 41.92) --
	(134.94, 41.92) --
	(134.94, 41.92);
\definecolor{drawColor}{RGB}{69,117,180}

\node[text=drawColor,anchor=base west,inner sep=0pt, outer sep=0pt, scale=  1.00] at ( 100, 70) {79/100};
\definecolor{drawColor}{RGB}{252,141,89}

\node[text=drawColor,anchor=base west,inner sep=0pt, outer sep=0pt, scale=  1.00] at ( 100, 60) {49/100};
\definecolor{drawColor}{RGB}{215,25,28}

\node[text=drawColor,anchor=base west,inner sep=0pt, outer sep=0pt, scale=  1.00] at ( 105,50) {5/100};
\end{scope}
\begin{scope}
\path[clip] (  0.00,  0.00) rectangle (144.54,144.54);
\definecolor{drawColor}{gray}{0.30}

\node[text=drawColor,anchor=base east,inner sep=0pt, outer sep=0pt, scale=  1.00] at ( 33.83, 33.60) {0};

\node[text=drawColor,anchor=base east,inner sep=0pt, outer sep=0pt, scale=  1.00] at ( 33.83, 58.00) {25};

\node[text=drawColor,anchor=base east,inner sep=0pt, outer sep=0pt, scale=  1.00] at ( 33.83, 82.41) {50};

\node[text=drawColor,anchor=base east,inner sep=0pt, outer sep=0pt, scale=  1.00] at ( 33.83,106.81) {75};

\node[text=drawColor,anchor=base east,inner sep=0pt, outer sep=0pt, scale=  1.00] at ( 33.83,131.22) {100};
\end{scope}
\begin{scope}
\path[clip] (  0.00,  0.00) rectangle (144.54,144.54);
\definecolor{drawColor}{gray}{0.30}

\node[text=drawColor,anchor=base,inner sep=0pt, outer sep=0pt, scale=  1.00] at (132.38, 20.78) {7000};

\node[text=drawColor,anchor=base,inner sep=0pt, outer sep=0pt, scale=  1.00] at (106.82, 20.78) {5000};

\node[text=drawColor,anchor=base,inner sep=0pt, outer sep=0pt, scale=  1.00] at ( 81.27, 20.78) {3000};

\node[text=drawColor,anchor=base,inner sep=0pt, outer sep=0pt, scale=  1.00] at ( 55.71, 20.78) {1000};
\end{scope}
\begin{scope}
\path[clip] (  0.00,  0.00) rectangle (144.54,144.54);
\definecolor{drawColor}{RGB}{0,0,0}

\node[text=drawColor,anchor=base,inner sep=0pt, outer sep=0pt, scale=  1.00] at ( 88.93,  6.94) {Computation time (s)};
\end{scope}
\begin{scope}
\path[clip] (  0.00,  0.00) rectangle (144.54,144.54);
\definecolor{drawColor}{RGB}{0,0,0}

\end{scope}
\end{tikzpicture}}%
  } \caption{ECDF of computation time for the shortest-path problem (left) and
  the knapsack problem (right) with discrete knapsack uncertainty sets. The
  numbers in the legend indicate the number of instances solved within the time
  limit of 2 hours out of 100 instances. Solid \blue{blue}: \blue{\textsf{MibS}}.
  Dotted \orange{orange}: \orange{\textsf{qip-existeval}}. Dashed \red{red}: \red{\textsf{qip-bilevel}}.}
  \label{fig:num-results:discrete:knapsack:ecdf-time}
\end{figure}

In Figure~\ref{fig:num-results:discrete:knapsack:ecdf-time}, we depict the ECDF
of computation times for the shortest-path problem (left) and the knapsack
problem (right). For the shortest-path problem, \textsf{MibS} performs better
than the QIP-based approaches, solving \rev{almost} all instances of the test
set, while the \textsf{qip-existeval} reformulation \rev{solves $82$ instances
and } and the \textsf{qip-bilevel} reformulation \rev{solves only $27$
instances}. For the knapsack problem, \textsf{MibS} solves $30$ instances more
than the \textsf{qip-existeval} reformulation, while the \textsf{qip-bilevel}
reformulation only solves $5$ instances in total. The ECDF of the number of
branch-and-bound nodes depicted in
Figure~\ref{fig:num-results:discrete:ecdf-nodes} \rev{in
Appendix~\ref{sec:appendix:plots}} confirms these observations, showing that
\textsf{MibS} produces smaller search trees than the QIP-based approaches for
both applications.

The sizes of the instances are comparable to the ones with discrete budgeted
uncertainty, with the largest solved knapsack instance containing $120$ items
for \textsf{MibS} and $90$ items for \textsf{Yasol}. For the shortest-path
problem, \textsf{Yasol} is only able to solve \rev{$1$~instance with $20$ nodes,
whereas \textsf{MibS} solved $6$ instances of the same size. The
\textsf{qip-bilevel} formulation even was only able to solve instances with up
to $6$ nodes.} Again, we note that the number of variables and constraints in
the bilevel reformulation of the shortest-path problem is in the order of
$|V|^2$; see Section~\ref{sec:num-results:discrete:budgeted}. Detailed scatter
plots for different sizes of instances can again be found in
Appendix~\ref{sec:appendix:plots}.

\subsubsection{Conclusion}
\label{sec:num-results:discrete:conclusion}

For DDRO problems with discrete uncertainty sets, the bilevel approach using an
MIBLP solver like \textsf{MibS} and the QIP-based approaches using a solver like
\textsf{Yasol} are among the first methods to ever solve problems of this type.
For the considered applications, \textsf{MibS} generally outperforms
\textsf{Yasol} in terms of computation time and number of branch-and-bound
nodes. Note that this is to be expected, since \textsf{MibS} is a
state-of-the-art solver for mixed-integer linear bilevel problems, whereas
\textsf{Yasol} targets a broader range of quantified integer problems and is not
specifically designed for solving this kind of problems. It is also noteworthy
that the performance of the two QIP-based approaches \rev{for budgeted
uncertainty sets} varies depending on the application, which may indicate that
there is still potential for improvement with more advanced modeling techniques.

Overall, the results show that solving DDRO problems with discrete uncertainty
sets is computationally challenging, even for relatively small instance sizes.
However, the results also demonstrate that the bilevel approach using a
dedicated MIBLP solver can be a powerful tool for solving such problems,
outperforming more general QIP-based approaches in the considered applications.

\section{Conclusion}
\label{sec:conclusion}

Bilevel and robust optimization problems have a rather similar
mathematical structure.
However, only until recently, these similarities have not been studied
and the two respective communities have not been in scientific contact
a lot.
While the connection has been, to the best of our knowledge,
observed by \textcite{Stein:2013} in the context of (generalized)
semi-infinite optimization, the first systematic analysis of the
structural similarities and differences has only recently been
published by \textcite{goerigk2025}.

Using the results from the latter paper, we are the first
exploiting the equivalence of decision-dependent robust optimization and bilevel
optimization in an extensive computational study.
The computational results demonstrate the potential of solving DDRO problems as
bilevel optimization problems, particularly for mixed-integer uncertainty sets.

First, we consider different classic
robust optimization problems with decision-dependent uncertainty sets, which are
given in a continuous and convex way so that classic dualization tricks of
robust optimization can be applied. In these cases, the respective single-level
reformulations of the corresponding bilevel problems are similar but larger,
which usually leads to larger computation times. Second, we also
consider decision-dependent uncertainty sets that cannot be treated via
dualization because they are represented as mixed-integer linear problems, for
which no strong-duality theorem is available in general.
For these cases, we compare the bilevel approach, which leads to
mixed-integer linear bilevel problems, with the
\textsf{Yasol} solver for quantified mixed-integer problems
\parencite{yasol2017}.
While the novel possibility of solving robust
problems as bilevel ones outperforms the quantified mixed-integer
solver, the numerical results also show that, even with the bilevel
approach, only rather small-scale decision-dependent robust problems
can be solved.
Hence, there is quite some room for future research at the interface
of robust and bilevel optimization.
We hope that this paper paves the way for such future contributions.

\section*{Acknowledgements}

We acknowledge the support by the German
Bundesministerium f\"ur Bildung und Forschung within
the project \enquote{RODES} (F\"orderkennzeichen 05M22UTB).
Moreover, the authors thank the DFG for their support within
the research training group~2126 \enquote{Algorithmic Optimization}.
The computations were executed on the high performance cluster
``Elwetritsch'' at the TU Kaiserslautern, which is part of the
``Alliance of High Performance Computing Rheinland-Pfalz'' (AHRP).
We kindly acknowledge the support of RHRK.
The authors acknowledge the use of DeepL and OpenAI's ChatGPT for
partly editing and polishing the text and figures for spelling,
grammar, and stylistic improvements.
Additionally, ChatGPT was utilized for support in basic coding tasks.
Finally, we are very grateful to Marc Goerigk and Michael
Hartisch, who made us aware of \textsf{Yasol} and greatly helped us
using it.

\section*{Compliance with Ethical Standards}

The authors have no relevant financial or non-financial interests to
disclose.

\section*{Data Availability Statement}
\label{sec:data-avail-stat}

This manuscript has associated data in a data repository
(\url{https://github.com/simstevens/ddro-via-bilevel}), which
includes all code and data that is required to reproduce the numerical
results reported in the manuscript.

\printbibliography
\appendix
\section{Detailed Derivation of the Reformulations for all Applications}
\label{sec:appendix:applications}

We illustrate the reformulation techniques for decision-dependent robust
optimization on the three considered applications. This section follows a common
pattern: for each application, we restate the DDRO model with budgeted and
knapsack uncertainty, present the associated bilevel formulation, and list both
the classic robust single-level reformulation and the strong duality-based
single-level reformulation of the bilevel problem. We begin with the
shortest-path problem (Section~\ref{sec:appendix:applications:shortest-path}),
then turn to the knapsack problem
(Section~\ref{sec:appendix:applications:knapsack}), and close with the portfolio
optimization problem (Section~\ref{sec:appendix:applications:portfolio}). We omit the
reformulation using KKT conditions because early preliminary experiments showed
that this approach is not competitive w.r.t.\ the strong duality-based
reformulation.

\subsection{The Shortest-Path Problem}
\label{sec:appendix:applications:shortest-path}

We start with the decision-dependent robust shortest-path problem described in
Section~\ref{sec:applications:shortest-path} and state the reformulations for
the cases of budgeted and knapsack uncertainty.

\subsubsection{Budgeted Uncertainty}
\label{sec:appendix:applications:shortest-path:budgeted}
Recall the decision-dependent robust shortest-path problem
\eqref{eq:applications:shortest-path:cont-dd-rob-model} from
Section~\ref{sec:applications:shortest-path:budgeted-uncertainty} given as
\begin{subequations}
  \label{eq:appendix:applications:shortest-path:budgeted}
  \begin{align}
    \min_{x,y} \quad
    &\sum_{a \in A} h_a x_a
      + \sum_{a \in A} \bar{d}_a y_a
      + \max_{u \in U_{\text{sp}}(x)} \
      \sum_{a \in A} u_a\hat{d}_a y_a
      \label{eq:appendix:applications:shortest-path:budgeted:objective}
    \\
    \st \quad
    &\sum_{a \in \delta^{\text{in}}(v)} y_a
    - \sum_{a \in \delta^{\text{out}}(v)} y_a =
      \begin{cases}
        1, \ &v = t, \\\
        -1, \ &v = s, \\
        0, \ &\text{else},
      \end{cases}
      \ v \in V,
    \label{eq:appendix:applications:shortest-path:flow-conservation}
    \\
    &x, y \in \set{0,1}^{\Card{A}},
  \end{align}
\end{subequations}
with decision-dependent budgeted uncertainty set $U_{\text{sp}} = U^{\text{cb}}$
or \hbox{$U_{\text{sp}} = U^{\text{db}}$} as defined in
Section~\ref{sec:uncertainty-sets:budgeted}. According to
Section~\ref{sec:general-model}, this problem can be reformulated as the bilevel
optimization problem
\begin{equation}
  \label{eq:appendix:applications:shortest-path:budgeted:bilevel}
  \begin{aligned}
    \min_{x,y} \quad
    &\sum_{a \in A} h_a x_a
      + \sum_{a \in A} \bar{d}_a y_a
      + \sum_{a \in A} u_a\hat{d}_a y_a
    \\
    \st \quad
    &\eqref{eq:appendix:applications:shortest-path:flow-conservation}, \
    x, y \in \set{0,1}^{\Card{A}},
    \\
    &u \in S(y,x),
  \end{aligned}
\end{equation}
where $S(y,x)$ is the set of optimal solutions to the $(y,x)$-parameterized
lower-level problem
\begin{equation}
  \label{eq:appendix:applications:shortest-path:budgeted:bilevel:lower}
  \begin{aligned}
    \max_{u} \quad
    &\sum_{a \in A} u_a\hat{d}_a y_a
    \\
    \st \quad
    &\sum_{a \in A} u_a \leq \Gamma,
    \\
    &0 \leq u_a \leq 1 - \gamma_a x_a, \quad
      a \in A.
  \end{aligned}
\end{equation}
In case of discrete budgeted uncertainty, i.e., $U_{\text{sp}} = U^{\text{db}}$,
we additionally have the integrality constraints $u \in \set{0,1}^{\Card{A}}$ in
the lower level. Then, the bilevel
problem~\eqref{eq:appendix:applications:shortest-path:budgeted:bilevel} cannot
be reformulated as a single-level problem using duality theory. However, it can
be tackled by an MIBLP solver such as \textsf{MibS}.

Note that the lower-level
problem~\eqref{eq:appendix:applications:shortest-path:budgeted:bilevel:lower} is
exactly the inner maximization problem in the objective
function~\eqref{eq:appendix:applications:shortest-path:budgeted:objective}. In
case of continuous budgeted uncertainty, i.e., $U_{\text{sp}} = U^{\text{cb}}$,
we can dualize the lower-level
problem~\eqref{eq:appendix:applications:shortest-path:budgeted:bilevel:lower} to
obtain the dual problem
\begin{equation*}
  \label{eq:appendix:applications:shortest-path:budgeted:inner-dual}
  \begin{aligned}
    \min_{\pi, \lambda} \quad
    &\pi \Gamma + \sum_{a \in A} \lambda_a (1 - \gamma_a x_a)
    \\
    \st \quad
    &\pi + \lambda_a \geq \hat{d}_a y_a, \quad
      a \in A,
    \\
    &\pi, \lambda \geq 0.
  \end{aligned}
\end{equation*}
This can be used to derive the classic robust reformulation as
\begin{equation*}
  \label{eq:appendix:applications:shortest-path:budgeted:classic-robust}
  \begin{aligned}
    \min_{x,y,\pi,\lambda} \quad
    &\sum_{a \in A} h_a x_a
      + \sum_{a \in A} \bar{d}_a y_a
      + \pi \Gamma + \sum_{a \in A} \lambda_a (1 - \gamma_a x_a)
    \\
    \st \quad
    &\eqref{eq:appendix:applications:shortest-path:flow-conservation}, \
    x, y \in \set{0,1}^{\Card{A}},
    \\
    &\pi + \lambda_a \geq \hat{d}_a y_a, \quad
      a \in A,
    \\
    &\pi, \lambda \geq 0.
  \end{aligned}
\end{equation*}
Similarly, one can derive the single-level reformulation of the bilevel
problem~\eqref{eq:appendix:applications:shortest-path:budgeted:bilevel} using
strong duality to obtain
\begin{equation*}
  \label{eq:appendix:applications:shortest-path:budgeted:bilevel-sd}
  \begin{aligned}
    \min_{x,y,u,\pi,\lambda} \quad
    &\sum_{a \in A} h_a x_a
      + \sum_{a \in A} \bar{d}_a y_a
      + \sum_{a \in A} u_a\hat{d}_a y_a
    \\
    \st \quad
    &\eqref{eq:appendix:applications:shortest-path:flow-conservation}, \
    x, y \in \set{0,1}^{\Card{A}},
    \\
    &\sum_{a \in A} u_a \leq \Gamma,
    \\
    &0 \leq u_a \leq 1 - \gamma_a x_a, \quad
      a \in A,
    \\
    &\pi + \lambda_a \geq \hat{d}_a y_a, \quad
      a \in A,
    \\
    &\sum_{a \in A} u_a\hat{d}_a y_a \geq
      \pi \Gamma + \sum_{a \in A} \lambda_a (1 - \gamma_a x_a),
    \\
    &\pi, \lambda \geq 0.
  \end{aligned}
\end{equation*}
The bilinearities $\lambda_a x_a$ and $u_a y_a$ in all reformulations can be
linearized using McCormick envelopes. For a detailed discussion we refer to
Appendix~\ref{sec:appendix:mccormick}.

\subsubsection{Knapsack Uncertainty}
\label{sec:appendix:applications:shortest-path:knapsack}

Recall the decision-dependent robust shortest-path problem
\eqref{eq:applications:shortest-path:cont-dd-rob-model:knapsack} from
Section~\ref{sec:applications:shortest-path:knapsack-uncertainty} given as
\begin{equation*}
  \label{eq:appendix:applications:shortest-path:knapsack}
  \begin{aligned}
    \min_{y} \quad
    &\sum_{a \in A} \bar{d}_a y_a
      + \max_{u \in U_{\text{sp}}(y)} \
      \sum_{a \in A} u_a\hat{d}_a y_a
    \\
    \st \quad
    &\eqref{eq:appendix:applications:shortest-path:flow-conservation}, \
    y \in \set{0,1}^{\Card{A}},
  \end{aligned}
\end{equation*}
with decision-dependent knapsack uncertainty set $U_{\text{sp}} = U^{\text{ck}}$
or \hbox{$U_{\text{sp}} = U^{\text{dk}}$} as defined in
Section~\ref{sec:uncertainty-sets:knapsack}. This problem can be equivalently
reformulated as the bilevel optimization problem
\begin{equation}
  \label{eq:appendix:applications:shortest-path:knapsack:bilevel}
  \begin{aligned}
    \min_{y} \quad
    &\sum_{a \in A} \bar{d}_a y_a
      + \sum_{a \in A} u_a\hat{d}_a y_a
    \\
    \st \quad
    &\eqref{eq:appendix:applications:shortest-path:flow-conservation}, \
    y \in \set{0,1}^{\Card{A}},
    \\
    & u \in S(y),
  \end{aligned}
\end{equation}
where $S(y)$ is the set of optimal solutions to the $y$-parameterized
lower-level problem
\begin{equation}
  \label{eq:appendix:applications:shortest-path:knapsack:bilevel:lower}
  \begin{aligned}
    \max_{u} \quad
    &\sum_{a \in A} u_a\hat{d}_a y_a
    \\
    \st \quad
    &f^\top u \leq b + w^\top y,
    \\
    &0 \leq u_a \leq 1, \quad
      a \in A.
  \end{aligned}
\end{equation}
Again, in case of discrete knapsack uncertainty, i.e., $U_{\text{sp}} =
U^{\text{dk}}$, we additionally have the integrality constraints $u \in
\set{0,1}^{\Card{A}}$ in the lower level and can solve the bilevel problem using
\textsf{MibS}. For continuous knapsack uncertainty, i.e.,
\hbox{$U_{\text{sp}} = U^{\text{ck}}$}, we can dualize the lower-level
problem~\eqref{eq:appendix:applications:shortest-path:knapsack:bilevel:lower} to
obtain the dual
\begin{equation*}
  \label{eq:appendix:applications:shortest-path:knapsack:inner-dual}
  \begin{aligned}
    \min_{\pi, \lambda} \quad
    &\pi (b + w^\top y) + \sum_{a \in A} \lambda_a
      \\
    \st \quad
    &\pi f_a + \lambda_a \geq \hat{d}_a y_a, \quad
      a \in A,
    \\
    &\pi, \lambda \geq 0.
  \end{aligned}
\end{equation*}
This can be used to derive the classic robust reformulation as
\begin{equation*}
  \label{eq:appendix:applications:shortest-path:knapsack:classic-robust}
  \begin{aligned}
    \min_{y,\pi,\lambda} \quad
    &\sum_{a \in A} \bar{d}_a y_a
      + \pi (b + w^\top y) + \sum_{a \in A} \lambda_a
    \\
    \st \quad
    &\eqref{eq:appendix:applications:shortest-path:flow-conservation}, \
    y \in \set{0,1}^{\Card{A}},
    \\
    &\pi f_a + \lambda_a \geq \hat{d}_a y_a, \quad
      a \in A,
    \\
    &\pi, \lambda \geq 0.
  \end{aligned}
\end{equation*}
Similarly, the single-level reformulation of the bilevel
problem~\eqref{eq:appendix:applications:shortest-path:knapsack:bilevel} using
strong duality can be derived as
\begin{equation*}
  \label{eq:appendix:applications:shortest-path:knapsack:bilevel-sd}
  \begin{aligned}
    \min_{y,u,\pi,\lambda} \quad
    &\sum_{a \in A} \bar{d}_a y_a
      + \sum_{a \in A} u_a\hat{d}_a y_a
    \\
    \st \quad
    &\eqref{eq:appendix:applications:shortest-path:flow-conservation}, \
    y \in \set{0,1}^{\Card{A}},
    \\
    &f^\top u \leq b + w^\top y,
    \\
    &0 \leq u_a \leq 1, \quad
      a \in A,
    \\
    &\pi f_a + \lambda_a \geq \hat{d}_a y_a, \quad
      a \in A,
    \\
    &\sum_{a \in A} u_a\hat{d}_a y_a \geq
      \pi (b + w^\top y) + \sum_{a \in A} \lambda_a,
    \\
    &\pi, \lambda \geq 0.
  \end{aligned}
\end{equation*}
Again, the bilinearities $\lambda_a y_a$ and $u_a y_a$ can be linearized using
McCormick envelopes.

\subsection{The Knapsack Problem}
\label{sec:appendix:applications:knapsack}
In this section, we summarize the reformulations of the decision-dependent robust knapsack
models from Section~\ref{sec:applications:knapsack} for budgeted and knapsack
uncertainty.

\subsubsection{Budgeted Uncertainty}
\label{sec:appendix:applications:knapsack:budgeted}

We consider the knapsack
problem~\eqref{eq:applications:knapsack:budgeted-dd-rob-model:budgeted} from
Section~\ref{sec:applications:knapsack:budgeted-uncertainty} given as
\begin{equation*}
  \label{eq:appendix:applications:knapsack:budgeted}
  \begin{aligned}
    \max_{x,y} \quad
    &c^\top y - h^\top x
    \\
    \st \quad
    &\sum_{i \in [n]} \bar{a}_i y_i
      + \max_{u \in U_\text{k}(x)} \
      \sum_{i \in [n]} u_i \hat{a}_i y_i \leq d,
    \\
    &x, y \in \set{0,1}^n,
  \end{aligned}
\end{equation*}
with decision-dependent budgeted uncertainty set $U_\text{k} = U^{\text{cb}}$ or
$U_\text{k} = U^{\text{db}}$ as defined in
Section~\ref{sec:uncertainty-sets:budgeted}. The equivalent bilevel formulation
is
\begin{equation*}
  \label{eq:appendix:applications:knapsack:budgeted:bilevel}
  \begin{aligned}
    \max_{x,y} \quad
    &c^\top y - h^\top x
    \\
    \st \quad
    &\sum_{i \in [n]} \bar{a}_i y_i +
      \sum_{i \in [n]} u_i \hat{a}_i y_i \leq d,
    \\
    &x, y \in \set{0,1}^n,
    \\
    &u \in S(y,x),
  \end{aligned}
\end{equation*}
where $S(y,x)$ is the set of optimal solutions to the $(y,x)$-parameterized
lower-level problem
\begin{equation*}
  \label{eq:appendix:applications:knapsack:budgeted:lower-level}
  \begin{aligned}
    \max_{u} \quad
    &\sum_{i \in [n]} u_i \hat{a}_i y_i
    \\
    \st \quad
    &\sum_{i \in [n]} u_i \leq \Gamma,
    \\
    &0 \leq u_i \leq 1 - \gamma_i x_i, \quad
      i \in [n].
  \end{aligned}
\end{equation*}
The discrete variant can be handled with \textsf{MibS}. For the continuous case, we
dualize the lower-level problem to obtain the dual problem
\begin{equation*}
  \label{eq:appendix:applications:knapsack:budgeted:inner-dual}
  \begin{aligned}
    \min_{\pi, \lambda} \quad
    &\Gamma \pi + \sum_{i \in [n]} (1 - \gamma_i x_i) \lambda_i
    \\
    \st \quad
    &\pi + \lambda_i \geq \hat{a}_i y_i, \quad
      i \in [n],
    \\
    &\pi, \lambda \geq 0.
  \end{aligned}
\end{equation*}
This yields the classic robust reformulation
\begin{equation*}
  \label{eq:appendix:applications:knapsack:budgeted:robust-reformulation}
  \begin{aligned}
    \max_{x,y,\pi,\lambda} \quad
    &c^\top y - h^\top x
    \\
    \st \quad
    &\sum_{i \in [n]} \bar{a}_i y_i +
      \Gamma \pi + \sum_{i \in [n]} (1 - \gamma_i x_i) \lambda_i \leq d,
    \\
    &\pi + \lambda_i \geq \hat{a}_i y_i, \quad
      i \in [n],
    \\
    &\pi, \lambda \geq 0,
    \\
    &x, y \in \set{0,1}^n,
  \end{aligned}
\end{equation*}
and the single-level reformulation of the bilevel problem
\begin{equation*}
  \label{eq:appendix:applications:knapsack:budgeted:single-level}
  \begin{aligned}
    \max_{x,y,u, \pi, \lambda} \quad
    &c^\top y - h^\top x
    \\
    \st \quad
    &\sum_{i \in [n]} \bar{a}_i y_i +
      \sum_{i \in [n]} u_i \hat{a}_i y_i \leq d,
    \\
    &\sum_{i \in [n]} u_i \leq \Gamma,
    \\
    &0 \leq u_i \leq 1 - \gamma_i x_i, \quad
      i \in [n],
    \\
    &\pi + \lambda_i \geq \hat{a}_i y_i, \quad
      i \in [n],
    \\
    &\sum_{i \in [n]} u_i \hat{a}_i y_i \geq
      \Gamma \pi + \sum_{i \in [n]} (1 - \gamma_i x_i) \lambda_i,
    \\
    &\pi, \lambda \geq 0,
    \\
    &x, y \in \set{0,1}^n.
  \end{aligned}
\end{equation*}

\subsubsection{Knapsack Uncertainty}
\label{sec:appendix:applications:knapsack:knapsack}

For knapsack uncertainty sets, the corresponding
problem~\eqref{eq:applications:knapsack:cont-dd-rob-model:knapsack} from
Section~\ref{sec:applications:knapsack:knapsack-uncertainty} is
\begin{equation*}
  \label{eq:appendix:applications:knapsack:knapsack}
  \begin{aligned}
    \max_{y} \quad
    &c^\top y
    \\
    \st \quad
    &\sum_{i \in [n]} \bar{a}_i y_i
      + \max_{u \in U_\text{k}(y)} \
      \sum_{i \in [n]} u_i \hat{a}_i y_i \leq d,
    \\
    &y \in \set{0,1}^n,
  \end{aligned}
\end{equation*}
with decision-dependent knapsack uncertainty set $U_\text{k} =
U^{\text{ck}}$ or $U_\text{k} = U^{\text{dk}}$ as defined in
Section~\ref{sec:uncertainty-sets:knapsack}. The equivalent bilevel problem is
\begin{equation*}
  \label{eq:appendix:applications:knapsack:knapsack:bilevel}
  \begin{aligned}
    \max_{y} \quad
    &c^\top y
    \\
    \st \quad
    &\sum_{i \in [n]} \bar{a}_i y_i
      + \sum_{i \in [n]} u_i \hat{a}_i y_i \leq d,
    \\
    &y \in \set{0,1}^n,
    \\
    &u \in S(y),
  \end{aligned}
\end{equation*}
where $S(y)$ denotes the set of optimal solutions to the $y$-parameterized lower-level
problem
\begin{equation}
  \label{eq:appendix:applications:knapsack:knapsack:lower-level}
  \begin{aligned}
    \max_{u} \quad
    &\sum_{i \in [n]} u_i \hat{a}_i y_i
    \\
    \st \quad
    &f^\top u \leq b + w^\top y,
    \\
    &0 \leq u_i \leq 1, \quad
      i \in [n].
  \end{aligned}
\end{equation}
Again, in case of discrete knapsack uncertainty, i.e., $U_\text{k} =
U^{\text{dk}}$, we additionally have the integrality constraints $u \in
\set{0,1}^n$ in the lower level and can solve the bilevel problem using
\textsf{MibS}. For continuous knapsack uncertainty, i.e., $U_\text{k} =
U^{\text{ck}}$, we can dualize the lower-level
problem~\eqref{eq:appendix:applications:knapsack:knapsack:lower-level} to obtain
\begin{equation*}
  \label{eq:appendix:applications:knapsack:knapsack:inner-dual}
  \begin{aligned}
    \min_{\pi, \lambda} \quad
    &\pi (b + w^\top y) + \sum_{i \in [n]} \lambda_i
    \\
    \st \quad
    &\pi f_i + \lambda_i \geq \hat{a}_i y_i, \quad
      i \in [n],
    \\
    &\pi, \lambda \geq 0.
  \end{aligned}
\end{equation*}
This leads to the classic robust reformulation
\begin{equation*}
  \label{eq:appendix:applications:knapsack:knapsack:robust-reformulation}
  \begin{aligned}
    \max_{y,\pi,\lambda} \quad
    &c^\top y
    \\
    \st \quad
    &\sum_{i \in [n]} \bar{a}_i y_i
      + \pi (b + w^\top y) + \sum_{i \in [n]} \lambda_i \leq d,
    \\
    &\pi f_i + \lambda_i \geq \hat{a}_i y_i, \quad
      i \in [n],
    \\
    &\pi, \lambda \geq 0,
    \\
    &y \in \set{0,1}^n,
  \end{aligned}
\end{equation*}
and the single-level reformulation of the bilevel problem is
\begin{equation*}
  \label{eq:appendix:applications:knapsack:knapsack:single-level}
  \begin{aligned}
    \max_{y,u, \pi, \lambda} \quad
    &c^\top y
    \\
    \st \quad
    &\sum_{i \in [n]} \bar{a}_i y_i +
      \sum_{i \in [n]} u_i \hat{a}_i y_i \leq d,
    \\
    &f^\top u \leq b + w^\top y,
    \\
    &0 \leq u_i \leq 1, \quad
      i \in [n],
    \\
    &\pi f_i + \lambda_i \geq \hat{a}_i y_i, \quad
      i \in [n],
    \\
    &\sum_{i \in [n]} u_i \hat{a}_i y_i \geq
      \pi (b + w^\top y) + \sum_{i \in [n]} \lambda_i,
    \\
    &\pi, \lambda \geq 0,
    \\
    &y \in \set{0,1}^n.
  \end{aligned}
\end{equation*}
Bilinear terms can be linearized with McCormick envelopes.

\subsection{The Portfolio Optimization Problem}
\label{sec:appendix:applications:portfolio}
Lastly, we restate the decision-dependent robust portfolio models from
Section~\ref{sec:applications:portfolio:budgeted-uncertainty} for budgeted and
knapsack uncertainty and collect their reformulations.

\subsubsection{Budgeted Uncertainty}
\label{sec:appendix:applications:portfolio:budgeted}
The robust portfolio optimization
problem~\eqref{eq:applications:portfolio:cont-dd-rob-model} reads
\begin{subequations}
  \label{eq:appendix:applications:portfolio:budgeted}
  \begin{align}
    \max_{y,s,x} \quad &\bar{\mu}^\top y - h^\top x -
    \max_{u \in U_\text{p}(x)} \
    \sum_{i \in [N]} u_i \hat{\mu}_i y_i
    \\
    \st \quad &y^\top \Sigma y \leq V_0, \quad \sum_{i \in [N]} y_i \leq 1,
    \label{eq:appendix:applications:portfolio:budgeted:c1}
    \\
    &\sum_{i \in [N]} s_i \leq k, \quad y_i \leq s_i, \quad i \in [N],
    \label{eq:appendix:applications:portfolio:budgeted:c2}
    \\
    &y, x \in [0,1]^N, \quad s \in \{0,1\}^N,
    \label{eq:appendix:applications:portfolio:budgeted:c3}
  \end{align}
\end{subequations}
with continuous decision-dependent budgeted uncertainty set $U_\text{p} =
U^\text{cb}$ as defined in Section~\ref{sec:uncertainty-sets:budgeted}. The
equivalent bilevel formulation reads
\begin{equation*}
  \label{eq:appendix:applications:portfolio:budgeted:bilevel}
  \begin{aligned}
    \max_{y,s,x,u} \quad &\bar{\mu}^\top y - h^\top x -
    \sum_{i \in [N]} u_i \hat{\mu}_i y_i
    \\
    \st \quad
    &\eqref{eq:appendix:applications:portfolio:budgeted:c1}\text{--}%
    \eqref{eq:appendix:applications:portfolio:budgeted:c3},
    \\
    &u \in S(y,x),
  \end{aligned}
\end{equation*}
where $S(y,x)$ denotes the set of optimal solutions to the $(y,x)$-parameterized
lower-level problem
\begin{equation}
  \label{eq:appendix:applications:portfolio:budgeted:lower-level}
  \begin{aligned}
    \max_{u} \quad &\sum_{i \in [N]} u_i \hat{\mu}_i y_i
    \\
    \st \quad &\sum_{i \in [N]} u_i \leq \Gamma,
    \\
    &0 \leq u_i \leq 1 - \gamma_i x_i, \quad i \in [N].
  \end{aligned}
\end{equation}
Due to the quadratic term in
Constraint~\eqref{eq:appendix:applications:portfolio:budgeted:c1}, the discrete
variant cannot be handled by MIBLP solvers such as \textsf{MibS}. For the continuous case
we dualize the lower-level
problem~\eqref{eq:appendix:applications:portfolio:budgeted:lower-level} to
obtain the dual problem
\begin{equation*}
  \label{eq:appendix:applications:portfolio:budgeted:dual-inner}
  \begin{aligned}
    \min_{\pi, \lambda} \quad
    &\Gamma \pi + \sum_{i \in [N]} (1 - \gamma_i x_i) \lambda_i
    \\
    \st \quad
    &\pi + \lambda_i \geq \hat{\mu}_i y_i, \quad
      i \in [N],
    \\
    &\pi, \lambda \geq 0.
  \end{aligned}
\end{equation*}
This yields the classic robust reformulation
\begin{equation*}
  \label{eq:appendix:applications:portfolio:budgeted:robust-reformulation}
  \begin{aligned}
    \max_{y,s,x,\pi,\lambda} \quad &\bar{\mu}^\top y - h^\top x -
    \Gamma \pi - \sum_{i \in [N]} (1 - \gamma_i x_i) \lambda_i
    \\
    \st \quad
    &\eqref{eq:appendix:applications:portfolio:budgeted:c1}\text{--}%
    \eqref{eq:appendix:applications:portfolio:budgeted:c3},
    \\
    &\pi + \lambda_i \geq \hat{\mu}_i y_i, \quad
      i \in [N],
    \\
    &\pi, \lambda \geq 0,
  \end{aligned}
\end{equation*}
and the single-level reformulation of the bilevel problem
\begin{equation*}
  \label{eq:appendix:applications:portfolio:budgeted:bilevel-sd}
  \begin{aligned}
    \max_{y,s,x,u,\pi,\lambda} \quad &\bar{\mu}^\top y - h^\top x
    - \sum_{i \in [N]} u_i \hat{\mu}_i y_i
    \\
    \st \quad
    &\eqref{eq:appendix:applications:portfolio:budgeted:c1}\text{--}%
    \eqref{eq:appendix:applications:portfolio:budgeted:c3},
    \\
    &\sum_{i \in [N]} u_i \leq \Gamma,
    \\
    &0 \leq u_i \leq 1 - \gamma_i x_i, \quad i \in [N],
    \\
    &\pi + \lambda_i \geq \hat{\mu}_i y_i, \quad
      i \in [N],
    \\
    &\sum_{i \in [N]} u_i \hat{\mu}_i y_i \geq
    \Gamma \pi + \sum_{i \in [N]} (1 - \gamma_i x_i) \lambda_i,
    \\
    &\pi, \lambda \geq 0.
  \end{aligned}
\end{equation*}

\subsubsection{Knapsack Uncertainty}
\label{sec:appendix:applications:portfolio:knapsack}
For knapsack uncertainty sets, the corresponding portfolio
problem~\eqref{eq:applications:portfolio:cont-dd-rob-model:knapsack} from
Section~\ref{sec:applications:portfolio:knapsack-uncertainty} is
\begin{equation*}
  \label{eq:appendix:applications:portfolio:knapsack}
  \begin{aligned}
    \max_{y,s} \quad &\bar{\mu}^\top y - \max_{u \in U_\text{p}(s)} \
    \sum_{i \in [N]} u_i \hat{\mu}_i y_i
    \\
    \st \quad
    &\eqref{eq:appendix:applications:portfolio:budgeted:c1}\text{--}%
    \eqref{eq:appendix:applications:portfolio:budgeted:c3},
  \end{aligned}
\end{equation*}
with continuous decision-dependent knapsack uncertainty set $U_\text{p} =
U^\text{ck}$ as defined in Section~\ref{sec:uncertainty-sets:knapsack}. The
equivalent bilevel formulation reads
\begin{equation*}
  \label{eq:appendix:applications:portfolio:knapsack:bilevel}
  \begin{aligned}
    \max_{y,s,u} \quad &\bar{\mu}^\top y - \sum_{i \in [N]} u_i \hat{\mu}_i y_i
    \\
    \st \quad
    &\eqref{eq:appendix:applications:portfolio:budgeted:c1}\text{--}%
    \eqref{eq:appendix:applications:portfolio:budgeted:c3},
    \\
    &u \in S(y,s),
  \end{aligned}
\end{equation*}
where $S(y,s)$ denotes the set of optimal solutions to the $(y,s)$-parameterized
lower-level problem
\begin{equation*}
  \label{eq:appendix:applications:portfolio:knapsack:lower-level}
  \begin{aligned}
    \max_{u} \quad &\sum_{i \in [N]} u_i \hat{\mu}_i y_i
    \\
    \st \quad &f^\top u \leq b + w^\top s,
    \\
    &0 \leq u_i \leq 1, \quad i \in [N].
  \end{aligned}
\end{equation*}
Again, due to the quadratic term in the constraints, the discrete version of
this problem cannot be solved with an MIBLP solver. However, for the continuous
version we can dualize the lower-level problem to obtain the dual
\begin{equation*}
  \label{eq:appendix:applications:portfolio:knapsack:inner-dual}
  \begin{aligned}
    \min_{\pi, \lambda} \quad
    &\pi (b + w^\top s) + \sum_{i \in [N]} \lambda_i
    \\
    \st \quad
    &f_i\pi + \lambda_i \geq \hat{\mu}_i y_i, \quad
      i \in [N],
    \\
    &\pi, \lambda \geq 0.
  \end{aligned}
\end{equation*}
This leads to the classic robust reformulation
\begin{equation*}
  \label{eq:appendix:applications:portfolio:knapsack:robust-reformulation}
  \begin{aligned}
    \max_{y,s,\pi,\lambda} \quad &\bar{\mu}^\top y
    - \pi (b + w^\top s) - \sum_{i \in [N]} \lambda_i
    \\
    \st \quad
    &\eqref{eq:appendix:applications:portfolio:budgeted:c1}\text{--}%
    \eqref{eq:appendix:applications:portfolio:budgeted:c3},
    \\
    &f_i\pi + \lambda_i \geq \hat{\mu}_i y_i, \quad
      i \in [N],
    \\
    &\pi, \lambda \geq 0,
  \end{aligned}
\end{equation*}
and the single-level reformulation of the bilevel problem
\begin{equation*}
  \label{eq:appendix:applications:portfolio:knapsack:bilevel-sd}
  \begin{aligned}
    \max_{y,s,u,\pi,\lambda} \quad &\bar{\mu}^\top y
    - \sum_{i \in [N]} u_i \hat{\mu}_i y_i
    \\
    \st \quad
    &\eqref{eq:appendix:applications:portfolio:budgeted:c1}\text{--}%
    \eqref{eq:appendix:applications:portfolio:budgeted:c3},
    \\
    &f^\top u \leq b + w^\top s,
    \\
    &0 \leq u_i \leq 1, \quad i \in [N],
    \\
    &f_i\pi + \lambda_i \geq \hat{\mu}_i y_i, \quad
      i \in [N],
    \\
    &\sum_{i \in [N]} u_i \hat{\mu}_i y_i \geq
    \pi (b + w^\top s) + \sum_{i \in [N]} \lambda_i,
    \\
    &\pi, \lambda \geq 0.
  \end{aligned}
\end{equation*}
Bilinearities are handled by McCormick envelopes; see
Appendix~\ref{sec:appendix:mccormick} for details.

\section{Quantified Integer Programming Models}
\label{sec:qip-models}

The DDRO problems we study in this paper can also be interpreted and solved as
quantified integer programs~(QIPs). A QIP is a natural extension of an integer
linear program~(ILP) that contains both existentially and universally quantified
variables. For more details we refer to, e.g.,
\textcite{goerigk2021} and \textcite{chistikov2017} as well as the references
therein. The QIP solver \textsf{Yasol} is described in detail in the paper by
\textcite{yasol2017}.

In the context of the DDRO problem~\eqref{eq:general-model}, the decision variables $x$
are existentially quantified, while the uncertain parameters $u$ are universally
quantified. An equivalent reformulation of Problem~\eqref{eq:general-model} as a
QIP is thus given by
\begin{subequations}
  \label{eq:qip-models:qip-model}
  \begin{align}
    \min_{x} \quad & c^\top x
    \\
    \st \quad &\exists x \in X \ \forall i \in [m_x] \ \forall u_i \in U_i(x):
    \\
    & a_i^\top x + u_i^\top B_i x \leq \beta_i.
    \label{eq:qip-models:qip-model:existential-constraint}
  \end{align}
\end{subequations}
We solve these QIP formulations using \textsf{Yasol} for the decision-dependent
robust knapsack and shortest-path problems with discrete uncertainty sets. Thus,
in the following, we can assume that the variables $x$ and $u$ are binary since
this is the case in both applications.

To solve the arising QIPs using \textsf{Yasol}, we need to linearize the
bilinear terms~$u_i^\top B_i x$ in
Constraint~\eqref{eq:qip-models:qip-model:existential-constraint}. In
Appendix~\ref{sec:appendix:mccormick} we show that it is sufficient to introduce
McCormick inequalities only in the lower level of the bilevel
reformulation~\eqref{eq:general-model:bilevel-reformulation}. This way, we
obtain a linear QIP of the form
\begin{equation}
  \label{eq:qip-models:qip-bilevel}
  \begin{aligned}
    \min_{x, r} \quad & c^\top x
    \\
    \st \quad &\exists x \in X \ \forall i \in [m_x] \ \forall u_i \in \tilde{U}_i(x):
    \\
    & a_i^\top x + r_i^\top B_i \leq \beta_i,
  \end{aligned}
\end{equation}
where $\tilde{U}_i(x)$ is the uncertainty set $U_i(x)$ augmented by the
McCormick inequalities for the bilinear terms $u_{ij} (B_i x)_j$ with $j \in
[n_x]$ and the auxiliary variables $r_i \in \R^{n_x}$ defined as $r_{ij} =
u_{ij} x_j$. For the exact formulation of the McCormick envelopes, we refer to
Appendix~\ref{sec:appendix:mccormick}. Another method to linearize the bilinear terms is
to add a second existential stage to the QIP that defines the auxiliary
variables $r_i$. With this, we obtain the linear QIP
\begin{equation}
  \label{eq:qip-models:qip-existeval}
  \begin{aligned}
    \min_{x, r} \quad & c^\top x
    \\
    \st \quad &\exists x \in X \ \forall i \in [m_x] \ \forall u_i \in U_i(x) \ \exists r_i \in \set{0,1}^{n_x}:
    \\
    & a_i^\top x + r_i^\top B_i \leq \beta_i,
    \\
    & u_{ij} + x_j - 1 \leq r_{ij}, \quad \forall j \in [n_x]
  \end{aligned}
\end{equation}
For binary variables $x$ and $u_i$, the last constraint ensures that $r_{ij} =
1$ holds if $u_{ij} = 1$ and $x_j = 1$.

In the numerical experiments in Section~\ref{sec:num-results}, we refer
to Reformulation~\eqref{eq:qip-models:qip-bilevel} as \textsf{qip-bilevel} and
to Reformulation~\eqref{eq:qip-models:qip-existeval} as \textsf{qip-existeval}.
All~\textsf{.qlp} instance files used in the experiments are available
at~\url{https://github.com/simstevens/ddro-via-bilevel}.

\section{McCormick envelopes for Bilinearities in Bilevel Problems}
\label{sec:appendix:mccormick}

We now show that we can equivalently linearize products of binary variables in
the lower and upper-level of a given bilevel problem using McCormick
inequalities \parencite{mccormick1976}. To this end, we denote the element-wise
(or Hadamard) product of two vectors $x, u \in \set{0,1}^n$ by $(x \circ u)_i =
x_i u_i$, $i \in \set{1,\dotsc,n}$. We then consider the bilevel problem
\begin{align*}
  \min_{x, u} \quad &c^\top x
  \\
  \st \quad &a^\top x + b^\top (x \circ u) \leq d_u,
  \\
                    &x \in \set{0,1}^n,
  \\
                    &u \in S(x),
\end{align*}
where $S(x)$ is the set of optimal solutions to the $x$-parameterized
lower-level problem
\begin{equation}
  \label{eq:app-a:lower-level-without-mccormick}
  \begin{aligned}
    \min_{u} \quad &-b^\top(x \circ u)
    \\
    \st \quad &Cx + Du \leq d_l,
    \\
                   &u \in \set{0,1}^n.
  \end{aligned}
\end{equation}
To linearize the bilinear terms ``$x \circ u$''  we use the McCormick
inequalities
\begin{subequations}
  \label{eq:app-a:mccormick}
  \begin{align}
    -u_i + r_i^u \leq 0, \
    -x_i + r_i^u \leq 0, \
    x_i + u_i - r_i^u \leq 1,
    & \quad
      i \in \set{1,\dotsc,n},
      \label{eq:app-a:mccormick-upper-level}
    \\
    -u_i + r_i^l \leq 0, \
    -x_i + r_i^l \leq 0,  \
    x_i + u_i - r_i^l \leq 1,
    & \quad i \in \set{1,\dotsc,n}.
      \label{eq:app-a:mccormick-lower-level}
  \end{align}
\end{subequations}
Here, \mbox{$r^u \in \set{0,1}^n$} is the auxiliary variable for the upper-level
problem and \mbox{$r^l \in \set{0,1}^n$} is the auxiliary variable for the
lower-level problem and \mbox{$r_i^l = r_i^u = x_i u_i, \ i \in
\set{1,\dotsc,n},$} holds by construction. Note, that the auxiliary variables
can also be chosen continuously. However, none of the approaches showed a
significant advantage in terms of runtime. Thus, we will only state the
derivation and the results for the binary case.

Using these McCormick inequalities~\eqref{eq:app-a:mccormick}, we obtain the
bilevel problem
\begin{equation}
  \label{eq:app-a:both-levels}
  \begin{split}
    \min_{x, u, r^u, r^l} \quad
    & c^\top x
    \\
    \st \quad
    & a^\top x + b^\top r^u \leq d_u,
    \\
    &\eqref{eq:app-a:mccormick-upper-level},
    \ x, r^u \in \set{0,1}^n,
    \\
    &(u, r^l) \in S(x),
  \end{split}
\end{equation}
where $S(x)$ is the set of optimal solutions to the $x$-parameterized
lower-level problem
\begin{equation}
  \label{eq:app-a:lower-level-problem}
  \begin{split}
    \min_{u, r^l} \quad
    & -b^\top r^l
    \\
    \st \quad
    & Cx + Du \leq d_l,
    \\
    &\eqref{eq:app-a:mccormick-lower-level},
    \ u, r^l \in \set{0,1}^n.
  \end{split}
\end{equation}
We can now show that this problem is equivalent to the following one in which
the McCormick inequalities are only in the lower-level problem
\begin{equation}
  \label{eq:app-a:lower-level}
  \begin{split}
    \min_{x, u, r^l} \quad
    & c^\top x
    \\
    \st \quad
    & a^\top x + b^\top r^l \leq d_u,
    \\
    &x \in \set{0,1}^n,
    \ (u, r^l) \in S(x).
  \end{split}
\end{equation}
Here, $S(x)$ is the set of optimal solutions to the $x$-parameterized
lower-level problem~\eqref{eq:app-a:lower-level-problem}.

\begin{lemma}
  \label{lem:app-a:mccormick-in-lower-level}
  For every bilevel-feasible point $(x, r^u, u, r^l)$ of
  Problem~\eqref{eq:app-a:both-levels}, the point $(x,u,r^l)$ is
  bilevel-feasible for Problem~\eqref{eq:app-a:lower-level} with the same
  objective function value. Moreover, for every bilevel-feasible point
  $(x,u,r^l)$ of Problem~\eqref{eq:app-a:lower-level}, the point $(x,r^l,u,r^l)$
  is bilevel-feasible for Problem~\eqref{eq:app-a:both-levels} with the same
  objective function value.
\end{lemma}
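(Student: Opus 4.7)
The plan is to exploit the fact that, because $x$ and $u$ are binary, the McCormick inequalities~\eqref{eq:app-a:mccormick} are exact: any $(x_i,u_i,r_i) \in \set{0,1}^2 \times \R$ satisfying $r_i \leq x_i$, $r_i \leq u_i$, and $r_i \geq x_i + u_i - 1$ must satisfy $r_i = x_i u_i$. Hence whenever \eqref{eq:app-a:mccormick-upper-level} or \eqref{eq:app-a:mccormick-lower-level} holds, the corresponding auxiliary variable is forced to equal $x \circ u$. In particular, any feasible $r^u$ and $r^l$ coincide. This is the only real content of the lemma; the remainder is bookkeeping between the two formulations.

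For the first direction, I would take a bilevel-feasible point $(x,r^u,u,r^l)$ of Problem~\eqref{eq:app-a:both-levels} and argue that $r^u = x \circ u = r^l$ by the exactness observation above. Then the upper-level inequality $a^\top x + b^\top r^u \leq d_u$ becomes $a^\top x + b^\top r^l \leq d_u$, which is the upper-level constraint of \eqref{eq:app-a:lower-level}. The lower-level constraints on $(u,r^l)$ and the optimality in $S(x)$ are the same in both formulations, so $(u,r^l) \in S(x)$ in \eqref{eq:app-a:lower-level} as well. The upper-level objective $c^\top x$ is unchanged, so the objective function value is preserved.

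For the converse direction, given a bilevel-feasible point $(x,u,r^l)$ of Problem~\eqref{eq:app-a:lower-level}, I would set $r^u := r^l$. Since $(u,r^l) \in S(x)$, the lower-level McCormick inequalities~\eqref{eq:app-a:mccormick-lower-level} give $r^l = x \circ u$, and hence $r^u = x \circ u$ satisfies the upper-level McCormick system~\eqref{eq:app-a:mccormick-upper-level} together with $r^u \in \set{0,1}^n$. The upper-level constraint $a^\top x + b^\top r^l \leq d_u$ translates directly into $a^\top x + b^\top r^u \leq d_u$, and the lower-level pair $(u,r^l)$ remains optimal for the same parameterized lower-level problem. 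So $(x,r^u,u,r^l)$ is bilevel-feasible for Problem~\eqref{eq:app-a:both-levels} with the same objective function value $c^\top x$.

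I do not expect any serious obstacle: the whole argument rests on the elementary fact that McCormick is tight on $\set{0,1}^2$. The only subtlety worth being careful about is that the lower-level problem is \emph{identical} in the two formulations (it already contains the lower-level McCormick block and the auxiliary variable $r^l$), so $S(x)$ is literally the same set in both bilevel models; this is what makes the translation of lower-level optimality automatic in both directions, and is what I would state explicitly before combining the two directions into the equivalence.
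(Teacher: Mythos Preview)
Your proof is correct and follows the same approach as the paper, which also argues $r^u = r^l$ ``by construction of the McCormick inequalities'' and uses that the lower-level problem~\eqref{eq:app-a:lower-level-problem} is literally the same in both formulations so that $(u,r^l)\in S(x)$ transfers automatically. One minor slip: the three inequalities alone do not force $r_i = x_i u_i$ for $r_i \in \R$ (take $x_i = u_i = 0$, $r_i = -1$); you also need $r_i \geq 0$ or $r_i \in \{0,1\}$, which both formulations include, so the conclusion stands.
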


\begin{proof}
  Let $(x,u,r^l)$ be bilevel-feasible for Problem~\eqref{eq:app-a:lower-level}.
  Then, $(x,r^l,u,r^l)$ satisfies the upper- and lower-level constraints of
  Problem~\eqref{eq:app-a:both-levels}. The point $(u,r^l)$ is optimal for the
  lower-level problem~\eqref{eq:app-a:lower-level-problem} due to the
  bilevel-feasibility of $(x,u,r^l)$.

  Let now $(x,r^u,u,r^l)$ be bilevel-feasible for
  Problem~\eqref{eq:app-a:both-levels}. Then, $(x,u,r^l)$ satisfies the upper-
  and lower-level constraints of Problem~\eqref{eq:app-a:lower-level} since
  $r^l=r^u$ by construction of the McCormick inequalities. The point $(u,r^l)$
  is optimal for the lower-level problem~\eqref{eq:app-a:lower-level-problem}
  due to the bilevel-feasibility of $(x,r^u,u,r^l)$.

  We finally note that Problems~\eqref{eq:app-a:both-levels} and
  \eqref{eq:app-a:lower-level} have the same objective functions, which proves
  the claim.
\end{proof}

We note that it is w.l.o.g.\ to assume that the uncertainties and, thus, the
auxiliary variables for the McCormick uncertainties are exclusively in the
constraints. For uncertainties in the objective function we can always use the
epigraph reformulation.

As a result of the presented reformulation, we obtain a mixed-integer linear
bilevel problem~\eqref{eq:app-a:lower-level}  that can be directly solved by
state-of-the-art bilevel solvers. Consequently, we can reformulate the discrete
versions of the bilevel problems in
Section~\ref{sec:appendix:applications:shortest-path} and
\ref{sec:appendix:applications:knapsack} as problems that can be tackled by the
\textsf{MibS} solver.

\begin{remark}
  Since the proof of Lemma~\ref{lem:app-a:mccormick-in-lower-level} does not
  depend on the structure of the upper-level non-coupling constraints, it stays
  valid if (non-)linear non-coupling constraints are added to the upper-level.
  However, there is no solver available for such nonlinear bilevel problems.
\end{remark}

\begin{remark}
  Lemma~\ref{lem:app-a:mccormick-in-lower-level} also holds for any finitely
  bounded continuous lower-level variable~$u_i \in [0,u_i^+]$ and the
  corresponding McCormick inequalities. Since the lower-level
  problem~\eqref{eq:app-a:lower-level-without-mccormick} would then be a
  continuous linear problem, the bilevel problem can be reformulated as a
  single-level problem using the KKT conditions or strong duality.
\end{remark}

\FloatBarrier

\FloatBarrier
\section{Supplementary Figures}
\label{sec:appendix:plots}
\rev{
\begin{figure}[h!]
  \centering
  \makebox[\textwidth][c]{%
    \scalebox{1.0}{\input{plots/optl_rev/sp_hedging_objective_scatter.tex}}%
  } \caption{Scatter plot for the shortest-path problem with continuous budgeted
  uncertainty sets. The x-axis shows the hedging costs~$h$ and the y-axis shows
  the optimal objective function values.}
  \label{fig:num-results:hedging:objective:scatter}
\end{figure}
\begin{figure}
  \centering
  \makebox[\textwidth][c]{%
    \scalebox{1.0}{\begin{tikzpicture}[x=1pt,y=1pt]
\definecolor{fillColor}{RGB}{255,255,255}
\path[use as bounding box,fill=fillColor,fill opacity=0.00] (35,0) rectangle (108.54,144.54);
\begin{scope}
\path[clip] ( 38.33, 32.16) rectangle (139.54,139.54);
\definecolor{drawColor}{gray}{0.92}

\path[draw=drawColor,line width= 0.3pt,line join=round] ( 38.33, 49.25) --
	(139.54, 49.25);

\path[draw=drawColor,line width= 0.3pt,line join=round] ( 38.33, 73.65) --
	(139.54, 73.65);

\path[draw=drawColor,line width= 0.3pt,line join=round] ( 38.33, 98.05) --
	(139.54, 98.05);

\path[draw=drawColor,line width= 0.3pt,line join=round] ( 38.33,122.46) --
	(139.54,122.46);

\path[draw=drawColor,line width= 0.3pt,line join=round] ( 47.94, 32.16) --
	( 47.94,139.54);

\path[draw=drawColor,line width= 0.3pt,line join=round] ( 67.28, 32.16) --
	( 67.28,139.54);

\path[draw=drawColor,line width= 0.3pt,line join=round] ( 86.61, 32.16) --
	( 86.61,139.54);

\path[draw=drawColor,line width= 0.3pt,line join=round] (105.94, 32.16) --
	(105.94,139.54);

\path[draw=drawColor,line width= 0.3pt,line join=round] (125.27, 32.16) --
	(125.27,139.54);

\path[draw=drawColor,line width= 0.5pt,line join=round] ( 38.33, 37.04) --
	(139.54, 37.04);

\path[draw=drawColor,line width= 0.5pt,line join=round] ( 38.33, 61.45) --
	(139.54, 61.45);

\path[draw=drawColor,line width= 0.5pt,line join=round] ( 38.33, 85.85) --
	(139.54, 85.85);

\path[draw=drawColor,line width= 0.5pt,line join=round] ( 38.33,110.26) --
	(139.54,110.26);

\path[draw=drawColor,line width= 0.5pt,line join=round] ( 38.33,134.66) --
	(139.54,134.66);

\path[draw=drawColor,line width= 0.5pt,line join=round] ( 57.61, 32.16) --
	( 57.61,139.54);

\path[draw=drawColor,line width= 0.5pt,line join=round] ( 76.94, 32.16) --
	( 76.94,139.54);

\path[draw=drawColor,line width= 0.5pt,line join=round] ( 96.27, 32.16) --
	( 96.27,139.54);

\path[draw=drawColor,line width= 0.5pt,line join=round] (115.61, 32.16) --
	(115.61,139.54);

\path[draw=drawColor,line width= 0.5pt,line join=round] (134.94, 32.16) --
	(134.94,139.54);
\definecolor{drawColor}{RGB}{69,117,180}

\path[draw=drawColor,line width= 1.4pt,line join=round] ( 56.32, 38.02) --
	( 60.33, 38.02) --
	( 60.33, 39.00) --
	( 60.45, 39.00) --
	( 60.45, 39.97) --
	( 60.71, 39.97) --
	( 60.71, 40.95) --
	( 63.24, 40.95) --
	( 63.24, 41.92) --
	( 67.73, 41.92) --
	( 67.73, 42.90) --
	( 70.23, 42.90) --
	( 70.23, 43.88) --
	( 72.55, 43.88) --
	( 72.55, 44.85) --
	( 72.59, 44.85) --
	( 72.59, 45.83) --
	( 72.62, 45.83) --
	( 72.62, 46.80) --
	( 73.71, 46.80) --
	( 73.71, 47.78) --
	( 73.97, 47.78) --
	( 73.97, 48.76) --
	( 74.42, 48.76) --
	( 74.42, 49.73) --
	( 74.79, 49.73) --
	( 74.79, 50.71) --
	( 75.24, 50.71) --
	( 75.24, 51.69) --
	( 76.05, 51.69) --
	( 76.05, 52.66) --
	( 77.06, 52.66) --
	( 77.06, 53.64) --
	( 77.32, 53.64) --
	( 77.32, 54.61) --
	( 77.73, 54.61) --
	( 77.73, 55.59) --
	( 79.67, 55.59) --
	( 79.67, 56.57) --
	( 79.73, 56.57) --
	( 79.73, 57.54) --
	( 80.08, 57.54) --
	( 80.08, 58.52) --
	( 80.11, 58.52) --
	( 80.11, 59.49) --
	( 80.62, 59.49) --
	( 80.62, 60.47) --
	( 81.10, 60.47) --
	( 81.10, 61.45) --
	( 81.30, 61.45) --
	( 81.30, 62.42) --
	( 81.71, 62.42) --
	( 81.71, 63.40) --
	( 81.98, 63.40) --
	( 81.98, 64.38) --
	( 82.23, 64.38) --
	( 82.23, 65.35) --
	( 82.60, 65.35) --
	( 82.60, 66.33) --
	( 82.67, 66.33) --
	( 82.67, 67.30) --
	( 84.48, 67.30) --
	( 84.48, 68.28) --
	( 85.29, 68.28) --
	( 85.29, 69.26) --
	( 85.41, 69.26) --
	( 85.41, 70.23) --
	( 91.06, 70.23) --
	( 91.06, 71.21) --
	(134.94, 71.21) --
	(134.94, 71.21);
\definecolor{drawColor}{RGB}{252,141,89}

\path[draw=drawColor,line width= 1.4pt,dash pattern=on 2pt off 2pt ,line join=round] ( 42.93, 38.02) --
	( 57.76, 38.02) --
	( 57.76, 39.00) --
	( 59.00, 39.00) --
	( 59.00, 39.97) --
	( 59.77, 39.97) --
	( 59.77, 40.95) --
	( 61.78, 40.95) --
	( 61.78, 41.92) --
	( 64.41, 41.92) --
	( 64.41, 42.90) --
	( 65.81, 42.90) --
	( 65.81, 43.88) --
	( 67.35, 43.88) --
	( 67.35, 44.85) --
	( 68.16, 44.85) --
	( 68.16, 45.83) --
	( 68.98, 45.83) --
	( 68.98, 46.80) --
	( 69.73, 46.80) --
	( 69.73, 47.78) --
	( 69.93, 47.78) --
	( 69.93, 48.76) --
	( 70.53, 48.76) --
	( 70.53, 49.73) --
	( 72.78, 49.73) --
	( 72.78, 50.71) --
	( 72.97, 50.71) --
	( 72.97, 51.69) --
	( 73.09, 51.69) --
	( 73.09, 52.66) --
	( 73.12, 52.66) --
	( 73.12, 53.64) --
	( 73.19, 53.64) --
	( 73.19, 54.61) --
	( 73.92, 54.61) --
	( 73.92, 55.59) --
	( 74.63, 55.59) --
	( 74.63, 56.57) --
	( 74.85, 56.57) --
	( 74.85, 57.54) --
	( 77.10, 57.54) --
	( 77.10, 58.52) --
	( 77.92, 58.52) --
	( 77.92, 59.49) --
	( 78.46, 59.49) --
	( 78.46, 60.47) --
	( 78.93, 60.47) --
	( 78.93, 61.45) --
	( 79.39, 61.45) --
	( 79.39, 62.42) --
	( 79.75, 62.42) --
	( 79.75, 63.40) --
	( 80.22, 63.40) --
	( 80.22, 64.38) --
	( 81.38, 64.38) --
	( 81.38, 65.35) --
	( 81.52, 65.35) --
	( 81.52, 66.33) --
	( 82.04, 66.33) --
	( 82.04, 67.30) --
	( 82.57, 67.30) --
	( 82.57, 68.28) --
	( 82.91, 68.28) --
	( 82.91, 69.26) --
	( 83.55, 69.26) --
	( 83.55, 70.23) --
	( 83.64, 70.23) --
	( 83.64, 71.21) --
	( 83.84, 71.21) --
	( 83.84, 72.18) --
	( 84.06, 72.18) --
	( 84.06, 73.16) --
	( 84.14, 73.16) --
	( 84.14, 74.14) --
	( 84.43, 74.14) --
	( 84.43, 75.11) --
	( 84.43, 75.11) --
	( 84.43, 76.09) --
	( 84.99, 76.09) --
	( 84.99, 77.07) --
	( 85.26, 77.07) --
	( 85.26, 78.04) --
	( 85.91, 78.04) --
	( 85.91, 79.02) --
	( 86.76, 79.02) --
	( 86.76, 79.99) --
	( 86.82, 79.99) --
	( 86.82, 80.97) --
	( 87.35, 80.97) --
	( 87.35, 81.95) --
	( 87.67, 81.95) --
	( 87.67, 82.92) --
	( 88.63, 82.92) --
	( 88.63, 83.90) --
	( 89.57, 83.90) --
	( 89.57, 84.88) --
	( 91.72, 84.88) --
	( 91.72, 85.85) --
	( 92.19, 85.85) --
	( 92.19, 86.83) --
	( 96.82, 86.83) --
	( 96.82, 87.80) --
	(134.94, 87.80) --
	(134.94, 87.80);
\definecolor{drawColor}{RGB}{69,117,180}

\node[text=drawColor,anchor=base west,inner sep=0pt, outer sep=0pt, scale=  1.00] at ( 40,120) {35/100};
\definecolor{drawColor}{RGB}{252,141,89}

\node[text=drawColor,anchor=base west,inner sep=0pt, outer sep=0pt, scale=  1.00] at ( 40,110) {52/100};
\end{scope}
\begin{scope}
\path[clip] (  0.00,  0.00) rectangle (144.54,144.54);
\definecolor{drawColor}{gray}{0.30}

\node[text=drawColor,anchor=base east,inner sep=0pt, outer sep=0pt, scale=  1.00] at ( 33.83, 33.60) {0};

\node[text=drawColor,anchor=base east,inner sep=0pt, outer sep=0pt, scale=  1.00] at ( 33.83, 58.00) {25};

\node[text=drawColor,anchor=base east,inner sep=0pt, outer sep=0pt, scale=  1.00] at ( 33.83, 82.41) {50};

\node[text=drawColor,anchor=base east,inner sep=0pt, outer sep=0pt, scale=  1.00] at ( 33.83,106.81) {75};

\node[text=drawColor,anchor=base east,inner sep=0pt, outer sep=0pt, scale=  1.00] at ( 33.83,131.22) {100};
\end{scope}
\begin{scope}
\path[clip] (  0.00,  0.00) rectangle (144.54,144.54);
\definecolor{drawColor}{gray}{0.30}

\node[text=drawColor,anchor=base,inner sep=0pt, outer sep=0pt, scale=  1.00] at ( 57.61, 20.78) {$10^3$};

\node[text=drawColor,anchor=base,inner sep=0pt, outer sep=0pt, scale=  1.00] at ( 96.27, 20.78) {$10^5$};

\node[text=drawColor,anchor=base,inner sep=0pt, outer sep=0pt, scale=  1.00] at (134.94, 20.78) {$10^7$};
\end{scope}
\begin{scope}
\path[clip] (  0.00,  0.00) rectangle (144.54,144.54);
\definecolor{drawColor}{RGB}{0,0,0}

\node[text=drawColor,rotate= 90.00,anchor=base,inner sep=0pt, outer sep=0pt, scale=  1.00] at ( 15, 85.85) {\# of solved instances};
\end{scope}
\end{tikzpicture}}%
    \scalebox{1.0}{\begin{tikzpicture}[x=1pt,y=1pt]
\definecolor{fillColor}{RGB}{255,255,255}
\path[use as bounding box,fill=fillColor,fill opacity=0.00] (0,0) rectangle (113.54,144.54);
\begin{scope}
\path[clip] ( 38.33, 32.16) rectangle (139.54,139.54);
\definecolor{drawColor}{gray}{0.92}

\path[draw=drawColor,line width= 0.3pt,line join=round] ( 38.33, 49.25) --
	(139.54, 49.25);

\path[draw=drawColor,line width= 0.3pt,line join=round] ( 38.33, 73.65) --
	(139.54, 73.65);

\path[draw=drawColor,line width= 0.3pt,line join=round] ( 38.33, 98.05) --
	(139.54, 98.05);

\path[draw=drawColor,line width= 0.3pt,line join=round] ( 38.33,122.46) --
	(139.54,122.46);

\path[draw=drawColor,line width= 0.3pt,line join=round] ( 42.93, 32.16) --
	( 42.93,139.54);

\path[draw=drawColor,line width= 0.3pt,line join=round] ( 69.22, 32.16) --
	( 69.22,139.54);

\path[draw=drawColor,line width= 0.3pt,line join=round] ( 95.51, 32.16) --
	( 95.51,139.54);

\path[draw=drawColor,line width= 0.3pt,line join=round] (121.79, 32.16) --
	(121.79,139.54);

\path[draw=drawColor,line width= 0.5pt,line join=round] ( 38.33, 37.04) --
	(139.54, 37.04);

\path[draw=drawColor,line width= 0.5pt,line join=round] ( 38.33, 61.45) --
	(139.54, 61.45);

\path[draw=drawColor,line width= 0.5pt,line join=round] ( 38.33, 85.85) --
	(139.54, 85.85);

\path[draw=drawColor,line width= 0.5pt,line join=round] ( 38.33,110.26) --
	(139.54,110.26);

\path[draw=drawColor,line width= 0.5pt,line join=round] ( 38.33,134.66) --
	(139.54,134.66);

\path[draw=drawColor,line width= 0.5pt,line join=round] ( 56.07, 32.16) --
	( 56.07,139.54);

\path[draw=drawColor,line width= 0.5pt,line join=round] ( 82.36, 32.16) --
	( 82.36,139.54);

\path[draw=drawColor,line width= 0.5pt,line join=round] (108.65, 32.16) --
	(108.65,139.54);

\path[draw=drawColor,line width= 0.5pt,line join=round] (134.94, 32.16) --
	(134.94,139.54);
\definecolor{drawColor}{RGB}{69,117,180}

\path[draw=drawColor,line width= 1.4pt,line join=round] ( 42.93, 38.02) --
	( 66.49, 38.02) --
	( 66.49, 39.00) --
	( 67.02, 39.00) --
	( 67.02, 39.97) --
	( 67.26, 39.97) --
	( 67.26, 40.95) --
	( 67.94, 40.95) --
	( 67.94, 41.92) --
	( 68.92, 41.92) --
	( 68.92, 42.90) --
	( 69.91, 42.90) --
	( 69.91, 43.88) --
	( 74.02, 43.88) --
	( 74.02, 44.85) --
	( 75.26, 44.85) --
	( 75.26, 45.83) --
	( 75.91, 45.83) --
	( 75.91, 46.80) --
	( 76.27, 46.80) --
	( 76.27, 47.78) --
	( 78.78, 47.78) --
	( 78.78, 48.76) --
	( 81.36, 48.76) --
	( 81.36, 49.73) --
	( 81.45, 49.73) --
	( 81.45, 50.71) --
	( 81.59, 50.71) --
	( 81.59, 51.69) --
	( 83.29, 51.69) --
	( 83.29, 52.66) --
	( 84.74, 52.66) --
	( 84.74, 53.64) --
	( 84.97, 53.64) --
	( 84.97, 54.61) --
	( 85.34, 54.61) --
	( 85.34, 55.59) --
	( 86.30, 55.59) --
	( 86.30, 56.57) --
	( 86.73, 56.57) --
	( 86.73, 57.54) --
	( 86.98, 57.54) --
	( 86.98, 58.52) --
	( 88.24, 58.52) --
	( 88.24, 59.49) --
	( 88.70, 59.49) --
	( 88.70, 60.47) --
	( 89.24, 60.47) --
	( 89.24, 61.45) --
	( 89.98, 61.45) --
	( 89.98, 62.42) --
	( 90.40, 62.42) --
	( 90.40, 63.40) --
	( 92.75, 63.40) --
	( 92.75, 64.38) --
	( 95.31, 64.38) --
	( 95.31, 65.35) --
	( 99.09, 65.35) --
	( 99.09, 66.33) --
	( 99.37, 66.33) --
	( 99.37, 67.30) --
	( 99.54, 67.30) --
	( 99.54, 68.28) --
	(101.88, 68.28) --
	(101.88, 69.26) --
	(102.26, 69.26) --
	(102.26, 70.23) --
	(102.75, 70.23) --
	(102.75, 71.21) --
	(103.46, 71.21) --
	(103.46, 72.18) --
	(103.71, 72.18) --
	(103.71, 73.16) --
	(103.76, 73.16) --
	(103.76, 74.14) --
	(103.82, 74.14) --
	(103.82, 75.11) --
	(104.20, 75.11) --
	(104.20, 76.09) --
	(104.49, 76.09) --
	(104.49, 77.07) --
	(104.68, 77.07) --
	(104.68, 78.04) --
	(106.22, 78.04) --
	(106.22, 79.02) --
	(106.47, 79.02) --
	(106.47, 79.99) --
	(106.89, 79.99) --
	(106.89, 80.97) --
	(107.23, 80.97) --
	(107.23, 81.95) --
	(108.23, 81.95) --
	(108.23, 82.92) --
	(108.41, 82.92) --
	(108.41, 83.90) --
	(109.63, 83.90) --
	(109.63, 84.88) --
	(125.68, 84.88) --
	(125.68, 85.85) --
	(134.94, 85.85) --
	(134.94, 85.85);
\definecolor{drawColor}{RGB}{252,141,89}

\path[draw=drawColor,line width= 1.4pt,dash pattern=on 2pt off 2pt ,line join=round] ( 42.93, 38.02) --
	( 59.43, 38.02) --
	( 59.43, 39.00) --
	( 60.03, 39.00) --
	( 60.03, 39.97) --
	( 61.95, 39.97) --
	( 61.95, 40.95) --
	( 69.66, 40.95) --
	( 69.66, 41.92) --
	( 69.86, 41.92) --
	( 69.86, 42.90) --
	( 71.49, 42.90) --
	( 71.49, 43.88) --
	( 73.85, 43.88) --
	( 73.85, 44.85) --
	( 76.55, 44.85) --
	( 76.55, 45.83) --
	( 78.21, 45.83) --
	( 78.21, 46.80) --
	( 78.73, 46.80) --
	( 78.73, 47.78) --
	( 79.31, 47.78) --
	( 79.31, 48.76) --
	( 79.94, 48.76) --
	( 79.94, 49.73) --
	( 79.95, 49.73) --
	( 79.95, 50.71) --
	( 81.68, 50.71) --
	( 81.68, 51.69) --
	( 83.06, 51.69) --
	( 83.06, 52.66) --
	( 83.52, 52.66) --
	( 83.52, 53.64) --
	( 86.13, 53.64) --
	( 86.13, 54.61) --
	( 86.92, 54.61) --
	( 86.92, 55.59) --
	( 88.29, 55.59) --
	( 88.29, 56.57) --
	( 90.36, 56.57) --
	( 90.36, 57.54) --
	( 90.49, 57.54) --
	( 90.49, 58.52) --
	( 90.55, 58.52) --
	( 90.55, 59.49) --
	( 93.81, 59.49) --
	( 93.81, 60.47) --
	( 94.96, 60.47) --
	( 94.96, 61.45) --
	( 95.29, 61.45) --
	( 95.29, 62.42) --
	( 95.92, 62.42) --
	( 95.92, 63.40) --
	( 98.49, 63.40) --
	( 98.49, 64.38) --
	( 98.73, 64.38) --
	( 98.73, 65.35) --
	(100.16, 65.35) --
	(100.16, 66.33) --
	(101.34, 66.33) --
	(101.34, 67.30) --
	(101.95, 67.30) --
	(101.95, 68.28) --
	(102.96, 68.28) --
	(102.96, 69.26) --
	(103.67, 69.26) --
	(103.67, 70.23) --
	(104.31, 70.23) --
	(104.31, 71.21) --
	(104.55, 71.21) --
	(104.55, 72.18) --
	(106.49, 72.18) --
	(106.49, 73.16) --
	(106.68, 73.16) --
	(106.68, 74.14) --
	(107.15, 74.14) --
	(107.15, 75.11) --
	(110.56, 75.11) --
	(110.56, 76.09) --
	(110.86, 76.09) --
	(110.86, 77.07) --
	(111.13, 77.07) --
	(111.13, 78.04) --
	(111.18, 78.04) --
	(111.18, 79.02) --
	(111.37, 79.02) --
	(111.37, 79.99) --
	(113.15, 79.99) --
	(113.15, 80.97) --
	(113.59, 80.97) --
	(113.59, 81.95) --
	(114.87, 81.95) --
	(114.87, 82.92) --
	(115.59, 82.92) --
	(115.59, 83.90) --
	(115.93, 83.90) --
	(115.93, 84.88) --
	(117.02, 84.88) --
	(117.02, 85.85) --
	(117.52, 85.85) --
	(117.52, 86.83) --
	(119.47, 86.83) --
	(119.47, 87.80) --
	(119.94, 87.80) --
	(119.94, 88.78) --
	(121.50, 88.78) --
	(121.50, 89.76) --
	(127.48, 89.76) --
	(127.48, 90.73) --
	(134.94, 90.73) --
	(134.94, 90.73);
\definecolor{drawColor}{RGB}{69,117,180}

\node[text=drawColor,anchor=base west,inner sep=0pt, outer sep=0pt, scale=  1.00] at ( 40,120) {50/100};
\definecolor{drawColor}{RGB}{252,141,89}

\node[text=drawColor,anchor=base west,inner sep=0pt, outer sep=0pt, scale=  1.00] at ( 40,110) {55/100};
\end{scope}
\begin{scope}
\path[clip] (  0.00,  0.00) rectangle (144.54,144.54);
\definecolor{drawColor}{gray}{0.30}

\node[text=drawColor,anchor=base,inner sep=0pt, outer sep=0pt, scale=  1.00] at ( 56.07, 20.78) {$10^1$};

\node[text=drawColor,anchor=base,inner sep=0pt, outer sep=0pt, scale=  1.00] at ( 82.36, 20.78) {$10^3$};

\node[text=drawColor,anchor=base,inner sep=0pt, outer sep=0pt, scale=  1.00] at (108.65, 20.78) {$10^5$};

\node[text=drawColor,anchor=base,inner sep=0pt, outer sep=0pt, scale=  1.00] at (134.94, 20.78) {$10^7$};
\end{scope}
\begin{scope}
\path[clip] (  0.00,  0.00) rectangle (144.54,144.54);
\definecolor{drawColor}{RGB}{0,0,0}

\node[text=drawColor,anchor=base,inner sep=0pt, outer sep=0pt, scale=  1.00] at ( 88.93,  6.94) {Branch-and-bound nodes};
\end{scope}
\begin{scope}
\path[clip] (  0.00,  0.00) rectangle (144.54,144.54);
\definecolor{drawColor}{RGB}{0,0,0}

\end{scope}
\end{tikzpicture}}%
    \scalebox{1.0}{\begin{tikzpicture}[x=1pt,y=1pt]
\definecolor{fillColor}{RGB}{255,255,255}
\path[use as bounding box,fill=fillColor,fill opacity=0.00] (0,0) rectangle (113.54,144.54);
\begin{scope}
\path[clip] ( 38.33, 32.16) rectangle (139.54,139.54);
\definecolor{drawColor}{gray}{0.92}

\path[draw=drawColor,line width= 0.3pt,line join=round] ( 38.33, 49.25) --
	(139.54, 49.25);

\path[draw=drawColor,line width= 0.3pt,line join=round] ( 38.33, 73.65) --
	(139.54, 73.65);

\path[draw=drawColor,line width= 0.3pt,line join=round] ( 38.33, 98.05) --
	(139.54, 98.05);

\path[draw=drawColor,line width= 0.3pt,line join=round] ( 38.33,122.46) --
	(139.54,122.46);

\path[draw=drawColor,line width= 0.3pt,line join=round] ( 42.93, 32.16) --
	( 42.93,139.54);

\path[draw=drawColor,line width= 0.3pt,line join=round] ( 69.22, 32.16) --
	( 69.22,139.54);

\path[draw=drawColor,line width= 0.3pt,line join=round] ( 95.51, 32.16) --
	( 95.51,139.54);

\path[draw=drawColor,line width= 0.3pt,line join=round] (121.79, 32.16) --
	(121.79,139.54);

\path[draw=drawColor,line width= 0.5pt,line join=round] ( 38.33, 37.04) --
	(139.54, 37.04);

\path[draw=drawColor,line width= 0.5pt,line join=round] ( 38.33, 61.45) --
	(139.54, 61.45);

\path[draw=drawColor,line width= 0.5pt,line join=round] ( 38.33, 85.85) --
	(139.54, 85.85);

\path[draw=drawColor,line width= 0.5pt,line join=round] ( 38.33,110.26) --
	(139.54,110.26);

\path[draw=drawColor,line width= 0.5pt,line join=round] ( 38.33,134.66) --
	(139.54,134.66);

\path[draw=drawColor,line width= 0.5pt,line join=round] ( 56.07, 32.16) --
	( 56.07,139.54);

\path[draw=drawColor,line width= 0.5pt,line join=round] ( 82.36, 32.16) --
	( 82.36,139.54);

\path[draw=drawColor,line width= 0.5pt,line join=round] (108.65, 32.16) --
	(108.65,139.54);

\path[draw=drawColor,line width= 0.5pt,line join=round] (134.94, 32.16) --
	(134.94,139.54);
\definecolor{drawColor}{RGB}{69,117,180}

\path[draw=drawColor,line width= 1.4pt,line join=round] ( 42.93, 52.66) --
	( 49.20, 52.66) --
	( 49.20, 53.64) --
	( 54.04, 53.64) --
	( 54.04, 54.61) --
	( 57.57, 54.61) --
	( 57.57, 55.59) --
	( 61.53, 55.59) --
	( 61.53, 56.57) --
	( 65.37, 56.57) --
	( 65.37, 57.54) --
	( 69.66, 57.54) --
	( 69.66, 58.52) --
	( 72.35, 58.52) --
	( 72.35, 59.49) --
	( 72.44, 59.49) --
	( 72.44, 60.47) --
	( 74.17, 60.47) --
	( 74.17, 61.45) --
	( 75.14, 61.45) --
	( 75.14, 62.42) --
	( 77.27, 62.42) --
	( 77.27, 63.40) --
	( 77.40, 63.40) --
	( 77.40, 64.38) --
	( 77.53, 64.38) --
	( 77.53, 65.35) --
	( 77.98, 65.35) --
	( 77.98, 66.33) --
	( 78.57, 66.33) --
	( 78.57, 67.30) --
	( 78.88, 67.30) --
	( 78.88, 68.28) --
	( 79.08, 68.28) --
	( 79.08, 69.26) --
	( 79.09, 69.26) --
	( 79.09, 70.23) --
	( 79.60, 70.23) --
	( 79.60, 71.21) --
	( 79.95, 71.21) --
	( 79.95, 72.18) --
	( 80.00, 72.18) --
	( 80.00, 73.16) --
	( 80.54, 73.16) --
	( 80.54, 74.14) --
	( 80.67, 74.14) --
	( 80.67, 75.11) --
	( 81.32, 75.11) --
	( 81.32, 76.09) --
	( 81.34, 76.09) --
	( 81.34, 77.07) --
	( 81.53, 77.07) --
	( 81.53, 78.04) --
	( 82.08, 78.04) --
	( 82.08, 79.02) --
	( 82.42, 79.02) --
	( 82.42, 79.99) --
	( 82.68, 79.99) --
	( 82.68, 80.97) --
	( 82.71, 80.97) --
	( 82.71, 81.95) --
	( 82.83, 81.95) --
	( 82.83, 82.92) --
	( 82.84, 82.92) --
	( 82.84, 83.90) --
	( 83.05, 83.90) --
	( 83.05, 84.88) --
	( 84.05, 84.88) --
	( 84.05, 85.85) --
	( 86.30, 85.85) --
	( 86.30, 86.83) --
	( 87.16, 86.83) --
	( 87.16, 87.80) --
	( 87.24, 87.80) --
	( 87.24, 88.78) --
	( 87.64, 88.78) --
	( 87.64, 89.76) --
	( 88.37, 89.76) --
	( 88.37, 90.73) --
	( 89.00, 90.73) --
	( 89.00, 91.71) --
	( 89.37, 91.71) --
	( 89.37, 92.68) --
	( 89.88, 92.68) --
	( 89.88, 93.66) --
	( 90.58, 93.66) --
	( 90.58, 94.64) --
	( 91.30, 94.64) --
	( 91.30, 95.61) --
	( 91.48, 95.61) --
	( 91.48, 96.59) --
	( 91.88, 96.59) --
	( 91.88, 97.57) --
	( 91.94, 97.57) --
	( 91.94, 98.54) --
	( 92.82, 98.54) --
	( 92.82, 99.52) --
	( 93.36, 99.52) --
	( 93.36,100.49) --
	( 93.92,100.49) --
	( 93.92,101.47) --
	( 94.36,101.47) --
	( 94.36,102.45) --
	( 96.11,102.45) --
	( 96.11,103.42) --
	( 96.13,103.42) --
	( 96.13,104.40) --
	( 98.08,104.40) --
	( 98.08,105.37) --
	( 99.76,105.37) --
	( 99.76,106.35) --
	(100.07,106.35) --
	(100.07,107.33) --
	(111.97,107.33) --
	(111.97,108.30) --
	(113.73,108.30) --
	(113.73,109.28) --
	(115.16,109.28) --
	(115.16,110.26) --
	(134.94,110.26) --
	(134.94,110.26);
\definecolor{drawColor}{RGB}{252,141,89}

\path[draw=drawColor,line width= 1.4pt,dash pattern=on 2pt off 2pt ,line join=round] ( 75.01, 38.02) --
	( 75.03, 38.02) --
	( 75.03, 39.00) --
	( 75.12, 39.00) --
	( 75.12, 39.97) --
	( 75.45, 39.97) --
	( 75.45, 40.95) --
	( 75.66, 40.95) --
	( 75.66, 41.92) --
	( 75.71, 41.92) --
	( 75.71, 42.90) --
	( 75.75, 42.90) --
	( 75.75, 43.88) --
	( 75.82, 43.88) --
	( 75.82, 44.85) --
	( 75.87, 44.85) --
	( 75.87, 45.83) --
	( 76.05, 45.83) --
	( 76.05, 46.80) --
	( 76.08, 46.80) --
	( 76.08, 47.78) --
	( 76.10, 47.78) --
	( 76.10, 48.76) --
	( 76.22, 48.76) --
	( 76.22, 49.73) --
	( 76.81, 49.73) --
	( 76.81, 50.71) --
	( 77.26, 50.71) --
	( 77.26, 53.64) --
	( 77.33, 53.64) --
	( 77.33, 55.59) --
	( 77.35, 55.59) --
	( 77.35, 56.57) --
	( 77.44, 56.57) --
	( 77.44, 57.54) --
	( 77.45, 57.54) --
	( 77.45, 58.52) --
	( 77.56, 58.52) --
	( 77.56, 59.49) --
	( 77.74, 59.49) --
	( 77.74, 60.47) --
	( 77.79, 60.47) --
	( 77.79, 61.45) --
	( 77.90, 61.45) --
	( 77.90, 62.42) --
	( 78.04, 62.42) --
	( 78.04, 63.40) --
	( 78.06, 63.40) --
	( 78.06, 64.38) --
	( 78.15, 64.38) --
	( 78.15, 65.35) --
	( 78.42, 65.35) --
	( 78.42, 66.33) --
	( 78.51, 66.33) --
	( 78.51, 67.30) --
	( 78.64, 67.30) --
	( 78.64, 69.26) --
	( 78.69, 69.26) --
	( 78.69, 70.23) --
	( 78.70, 70.23) --
	( 78.70, 71.21) --
	( 78.73, 71.21) --
	( 78.73, 72.18) --
	( 79.33, 72.18) --
	( 79.33, 73.16) --
	( 79.46, 73.16) --
	( 79.46, 74.14) --
	( 79.53, 74.14) --
	( 79.53, 75.11) --
	( 79.58, 75.11) --
	( 79.58, 76.09) --
	( 79.62, 76.09) --
	( 79.62, 77.07) --
	( 79.64, 77.07) --
	( 79.64, 78.04) --
	( 79.65, 78.04) --
	( 79.65, 79.02) --
	( 79.67, 79.02) --
	( 79.67, 79.99) --
	( 79.77, 79.99) --
	( 79.77, 80.97) --
	( 80.01, 80.97) --
	( 80.01, 81.95) --
	( 80.25, 81.95) --
	( 80.25, 82.92) --
	( 80.40, 82.92) --
	( 80.40, 84.88) --
	( 80.70, 84.88) --
	( 80.70, 85.85) --
	( 80.82, 85.85) --
	( 80.82, 86.83) --
	( 81.01, 86.83) --
	( 81.01, 87.80) --
	( 81.06, 87.80) --
	( 81.06, 89.76) --
	( 81.17, 89.76) --
	( 81.17, 90.73) --
	( 81.18, 90.73) --
	( 81.18, 91.71) --
	( 81.19, 91.71) --
	( 81.19, 92.68) --
	( 81.26, 92.68) --
	( 81.26, 93.66) --
	( 81.28, 93.66) --
	( 81.28, 94.64) --
	( 81.53, 94.64) --
	( 81.53, 95.61) --
	( 81.57, 95.61) --
	( 81.57, 96.59) --
	( 81.63, 96.59) --
	( 81.63, 97.57) --
	( 81.82, 97.57) --
	( 81.82, 99.52) --
	( 81.83, 99.52) --
	( 81.83,100.49) --
	( 81.93,100.49) --
	( 81.93,101.47) --
	( 82.01,101.47) --
	( 82.01,102.45) --
	( 82.19,102.45) --
	( 82.19,103.42) --
	( 82.26,103.42) --
	( 82.26,104.40) --
	( 82.30,104.40) --
	( 82.30,105.37) --
	( 82.41,105.37) --
	( 82.41,106.35) --
	( 82.44,106.35) --
	( 82.44,107.33) --
	( 82.47,107.33) --
	( 82.47,108.30) --
	( 82.51,108.30) --
	( 82.51,109.28) --
	( 82.52,109.28) --
	( 82.52,110.26) --
	( 82.54,110.26) --
	( 82.54,111.23) --
	( 82.67,111.23) --
	( 82.67,112.21) --
	( 82.69,112.21) --
	( 82.69,113.18) --
	( 82.86,113.18) --
	( 82.86,114.16) --
	( 82.87,114.16) --
	( 82.87,115.14) --
	( 85.79,115.14) --
	( 85.79,116.11) --
	( 88.18,116.11) --
	( 88.18,117.09) --
	( 88.19,117.09) --
	( 88.19,118.06) --
	( 88.63,118.06) --
	( 88.63,119.04) --
	( 90.81,119.04) --
	( 90.81,120.02) --
	( 94.13,120.02) --
	( 94.13,120.99) --
	( 94.64,120.99) --
	( 94.64,121.97) --
	( 96.40,121.97) --
	( 96.40,122.95) --
	( 99.64,122.95) --
	( 99.64,123.92) --
	( 99.98,123.92) --
	( 99.98,124.90) --
	(100.07,124.90) --
	(100.07,125.87) --
	(100.27,125.87) --
	(100.27,126.85) --
	(105.86,126.85) --
	(105.86,127.83) --
	(106.79,127.83) --
	(106.79,128.80) --
	(106.95,128.80) --
	(106.95,129.78) --
	(108.36,129.78) --
	(108.36,130.75) --
	(110.60,130.75) --
	(110.60,131.73) --
	(113.10,131.73) --
	(113.10,132.71) --
	(116.91,132.71) --
	(116.91,133.68) --
	(134.94,133.68) --
	(134.94,133.68);
\definecolor{drawColor}{RGB}{69,117,180}

\node[text=drawColor,anchor=base west,inner sep=0pt, outer sep=0pt, scale=  1.00] at ( 40,120) {75/100};
\definecolor{drawColor}{RGB}{252,141,89}

\node[text=drawColor,anchor=base west,inner sep=0pt, outer sep=0pt, scale=  1.00] at ( 40,110) {99/100};
\end{scope}
\begin{scope}
\path[clip] (  0.00,  0.00) rectangle (144.54,144.54);
\definecolor{drawColor}{gray}{0.30}

\node[text=drawColor,anchor=base,inner sep=0pt, outer sep=0pt, scale=  1.00] at ( 56.07, 20.78) {$10^1$};

\node[text=drawColor,anchor=base,inner sep=0pt, outer sep=0pt, scale=  1.00] at ( 82.36, 20.78) {$10^3$};

\node[text=drawColor,anchor=base,inner sep=0pt, outer sep=0pt, scale=  1.00] at (108.65, 20.78) {$10^5$};

\node[text=drawColor,anchor=base,inner sep=0pt, outer sep=0pt, scale=  1.00] at (134.94, 20.78) {$10^7$};
\end{scope}

\end{tikzpicture}}%
  }
  \makebox[\textwidth][c]{%
    \scalebox{1.0}{\begin{tikzpicture}[x=1pt,y=1pt]
\definecolor{fillColor}{RGB}{255,255,255}
\path[use as bounding box,fill=fillColor,fill opacity=0.00] (35,0) rectangle (108.54,144.54);
\begin{scope}
\path[clip] ( 38.33, 32.16) rectangle (139.54,139.54);
\definecolor{drawColor}{gray}{0.92}

\path[draw=drawColor,line width= 0.3pt,line join=round] ( 38.33, 49.25) --
	(139.54, 49.25);

\path[draw=drawColor,line width= 0.3pt,line join=round] ( 38.33, 73.65) --
	(139.54, 73.65);

\path[draw=drawColor,line width= 0.3pt,line join=round] ( 38.33, 98.05) --
	(139.54, 98.05);

\path[draw=drawColor,line width= 0.3pt,line join=round] ( 38.33,122.46) --
	(139.54,122.46);

\path[draw=drawColor,line width= 0.3pt,line join=round] ( 42.93, 32.16) --
	( 42.93,139.54);

\path[draw=drawColor,line width= 0.3pt,line join=round] ( 69.22, 32.16) --
	( 69.22,139.54);

\path[draw=drawColor,line width= 0.3pt,line join=round] ( 95.51, 32.16) --
	( 95.51,139.54);

\path[draw=drawColor,line width= 0.3pt,line join=round] (121.79, 32.16) --
	(121.79,139.54);

\path[draw=drawColor,line width= 0.5pt,line join=round] ( 38.33, 37.04) --
	(139.54, 37.04);

\path[draw=drawColor,line width= 0.5pt,line join=round] ( 38.33, 61.45) --
	(139.54, 61.45);

\path[draw=drawColor,line width= 0.5pt,line join=round] ( 38.33, 85.85) --
	(139.54, 85.85);

\path[draw=drawColor,line width= 0.5pt,line join=round] ( 38.33,110.26) --
	(139.54,110.26);

\path[draw=drawColor,line width= 0.5pt,line join=round] ( 38.33,134.66) --
	(139.54,134.66);

\path[draw=drawColor,line width= 0.5pt,line join=round] ( 56.07, 32.16) --
	( 56.07,139.54);

\path[draw=drawColor,line width= 0.5pt,line join=round] ( 82.36, 32.16) --
	( 82.36,139.54);

\path[draw=drawColor,line width= 0.5pt,line join=round] (108.65, 32.16) --
	(108.65,139.54);

\path[draw=drawColor,line width= 0.5pt,line join=round] (134.94, 32.16) --
	(134.94,139.54);
\definecolor{drawColor}{RGB}{69,117,180}

\path[draw=drawColor,line width= 1.4pt,line join=round] ( 42.93,104.40) --
	( 63.99,104.40) --
	( 63.99,105.37) --
	( 65.80,105.37) --
	( 65.80,106.35) --
	( 68.80,106.35) --
	( 68.80,107.33) --
	( 69.10,107.33) --
	( 69.10,108.30) --
	( 69.60,108.30) --
	( 69.60,109.28) --
	( 70.01,109.28) --
	( 70.01,111.23) --
	( 70.11,111.23) --
	( 70.11,112.21) --
	( 70.40,112.21) --
	( 70.40,114.16) --
	( 71.53,114.16) --
	( 71.53,115.14) --
	( 71.79,115.14) --
	( 71.79,116.11) --
	( 72.41,116.11) --
	( 72.41,117.09) --
	( 72.79,117.09) --
	( 72.79,118.06) --
	( 72.91,118.06) --
	( 72.91,119.04) --
	( 73.72,119.04) --
	( 73.72,120.02) --
	( 74.67,120.02) --
	( 74.67,120.99) --
	( 79.00,120.99) --
	( 79.00,121.97) --
	( 79.68,121.97) --
	( 79.68,122.95) --
	( 80.67,122.95) --
	( 80.67,123.92) --
	( 82.15,123.92) --
	( 82.15,124.90) --
	( 83.98,124.90) --
	( 83.98,125.87) --
	( 84.86,125.87) --
	( 84.86,126.85) --
	( 85.51,126.85) --
	( 85.51,127.83) --
	( 85.76,127.83) --
	( 85.76,128.80) --
	( 86.11,128.80) --
	( 86.11,129.78) --
	( 87.55,129.78) --
	( 87.55,130.75) --
	( 87.93,130.75) --
	( 87.93,131.73) --
	( 88.05,131.73) --
	( 88.05,132.71) --
	( 90.25,132.71) --
	( 90.25,133.68) --
	( 92.21,133.68) --
	( 92.21,134.66) --
	(134.94,134.66) --
	(134.94,134.66);
\definecolor{drawColor}{RGB}{252,141,89}

\path[draw=drawColor,line width= 1.4pt,dash pattern=on 2pt off 2pt ,line join=round] ( 42.93,134.66) --
	(134.94,134.66) --
	(134.94,134.66);
\definecolor{drawColor}{RGB}{69,117,180}

\node[text=drawColor,anchor=base west,inner sep=0pt, outer sep=0pt, scale=  1.00] at ( 100,50) {100/100};
\definecolor{drawColor}{RGB}{252,141,89}

\node[text=drawColor,anchor=base west,inner sep=0pt, outer sep=0pt, scale=  1.00] at ( 100,40) {100/100};
\end{scope}
\begin{scope}
\path[clip] (  0.00,  0.00) rectangle (144.54,144.54);
\definecolor{drawColor}{gray}{0.30}

\node[text=drawColor,anchor=base east,inner sep=0pt, outer sep=0pt, scale=  1.00] at ( 33.83, 33.60) {0};

\node[text=drawColor,anchor=base east,inner sep=0pt, outer sep=0pt, scale=  1.00] at ( 33.83, 58.00) {25};

\node[text=drawColor,anchor=base east,inner sep=0pt, outer sep=0pt, scale=  1.00] at ( 33.83, 82.41) {50};

\node[text=drawColor,anchor=base east,inner sep=0pt, outer sep=0pt, scale=  1.00] at ( 33.83,106.81) {75};

\node[text=drawColor,anchor=base east,inner sep=0pt, outer sep=0pt, scale=  1.00] at ( 33.83,131.22) {100};
\end{scope}
\begin{scope}
\path[clip] (  0.00,  0.00) rectangle (144.54,144.54);
\definecolor{drawColor}{gray}{0.30}

\node[text=drawColor,anchor=base,inner sep=0pt, outer sep=0pt, scale=  1.00] at ( 56.07, 20.78) {$10^1$};

\node[text=drawColor,anchor=base,inner sep=0pt, outer sep=0pt, scale=  1.00] at ( 82.36, 20.78) {$10^3$};

\node[text=drawColor,anchor=base,inner sep=0pt, outer sep=0pt, scale=  1.00] at (108.65, 20.78) {$10^5$};
\node[text=drawColor,anchor=base,inner sep=0pt, outer sep=0pt, scale=  1.00] at (134.94, 20.78) {$10^7$};
\end{scope}
\begin{scope}
\path[clip] (  0.00,  0.00) rectangle (144.54,144.54);
\definecolor{drawColor}{RGB}{0,0,0}

\end{scope}
\begin{scope}
\path[clip] (  0.00,  0.00) rectangle (144.54,144.54);
\definecolor{drawColor}{RGB}{0,0,0}

\node[text=drawColor,rotate= 90.00,anchor=base,inner sep=0pt, outer sep=0pt, scale=  1.00] at ( 15, 85.85) {\# of solved instances};
\end{scope}
\end{tikzpicture}}%
    \scalebox{1.0}{\begin{tikzpicture}[x=1pt,y=1pt]
\definecolor{fillColor}{RGB}{255,255,255}
\path[use as bounding box,fill=fillColor,fill opacity=0.00] (0,0) rectangle (113.54,144.54);
\begin{scope}
\path[clip] ( 38.33, 32.16) rectangle (139.54,139.54);
\definecolor{drawColor}{gray}{0.92}

\path[draw=drawColor,line width= 0.3pt,line join=round] ( 38.33, 49.25) --
	(139.54, 49.25);

\path[draw=drawColor,line width= 0.3pt,line join=round] ( 38.33, 73.65) --
	(139.54, 73.65);

\path[draw=drawColor,line width= 0.3pt,line join=round] ( 38.33, 98.05) --
	(139.54, 98.05);

\path[draw=drawColor,line width= 0.3pt,line join=round] ( 38.33,122.46) --
	(139.54,122.46);

\path[draw=drawColor,line width= 0.3pt,line join=round] ( 46.13, 32.16) --
	( 46.13,139.54);

\path[draw=drawColor,line width= 0.3pt,line join=round] ( 81.66, 32.16) --
	( 81.66,139.54);

\path[draw=drawColor,line width= 0.3pt,line join=round] (117.18, 32.16) --
	(117.18,139.54);

\path[draw=drawColor,line width= 0.5pt,line join=round] ( 38.33, 37.04) --
	(139.54, 37.04);

\path[draw=drawColor,line width= 0.5pt,line join=round] ( 38.33, 61.45) --
	(139.54, 61.45);

\path[draw=drawColor,line width= 0.5pt,line join=round] ( 38.33, 85.85) --
	(139.54, 85.85);

\path[draw=drawColor,line width= 0.5pt,line join=round] ( 38.33,110.26) --
	(139.54,110.26);

\path[draw=drawColor,line width= 0.5pt,line join=round] ( 38.33,134.66) --
	(139.54,134.66);

\path[draw=drawColor,line width= 0.5pt,line join=round] ( 63.89, 32.16) --
	( 63.89,139.54);

\path[draw=drawColor,line width= 0.5pt,line join=round] ( 99.42, 32.16) --
	( 99.42,139.54);

\path[draw=drawColor,line width= 0.5pt,line join=round] (134.94, 32.16) --
	(134.94,139.54);
\definecolor{drawColor}{RGB}{69,117,180}

\path[draw=drawColor,line width= 1.4pt,line join=round] ( 42.93, 38.02) --
	( 54.63, 38.02) --
	( 54.63, 39.00) --
	( 56.20, 39.00) --
	( 56.20, 39.97) --
	( 60.43, 39.97) --
	( 60.43, 40.95) --
	( 60.71, 40.95) --
	( 60.71, 41.92) --
	( 60.91, 41.92) --
	( 60.91, 42.90) --
	( 62.35, 42.90) --
	( 62.35, 43.88) --
	( 62.46, 43.88) --
	( 62.46, 44.85) --
	( 62.59, 44.85) --
	( 62.59, 45.83) --
	( 63.79, 45.83) --
	( 63.79, 46.80) --
	( 65.98, 46.80) --
	( 65.98, 47.78) --
	( 67.57, 47.78) --
	( 67.57, 48.76) --
	( 68.96, 48.76) --
	( 68.96, 49.73) --
	( 69.24, 49.73) --
	( 69.24, 50.71) --
	( 69.54, 50.71) --
	( 69.54, 51.69) --
	( 69.67, 51.69) --
	( 69.67, 52.66) --
	( 69.79, 52.66) --
	( 69.79, 53.64) --
	( 69.83, 53.64) --
	( 69.83, 54.61) --
	( 70.16, 54.61) --
	( 70.16, 55.59) --
	( 70.28, 55.59) --
	( 70.28, 56.57) --
	( 70.33, 56.57) --
	( 70.33, 57.54) --
	( 71.34, 57.54) --
	( 71.34, 58.52) --
	( 72.00, 58.52) --
	( 72.00, 59.49) --
	( 72.15, 59.49) --
	( 72.15, 60.47) --
	( 72.27, 60.47) --
	( 72.27, 61.45) --
	( 73.34, 61.45) --
	( 73.34, 62.42) --
	( 73.35, 62.42) --
	( 73.35, 63.40) --
	( 73.71, 63.40) --
	( 73.71, 64.38) --
	( 74.02, 64.38) --
	( 74.02, 65.35) --
	( 74.12, 65.35) --
	( 74.12, 66.33) --
	( 74.46, 66.33) --
	( 74.46, 67.30) --
	( 75.25, 67.30) --
	( 75.25, 68.28) --
	( 75.94, 68.28) --
	( 75.94, 69.26) --
	( 75.96, 69.26) --
	( 75.96, 70.23) --
	( 75.99, 70.23) --
	( 75.99, 71.21) --
	( 76.72, 71.21) --
	( 76.72, 72.18) --
	( 77.02, 72.18) --
	( 77.02, 73.16) --
	( 77.83, 73.16) --
	( 77.83, 74.14) --
	( 77.93, 74.14) --
	( 77.93, 75.11) --
	( 77.98, 75.11) --
	( 77.98, 76.09) --
	( 78.02, 76.09) --
	( 78.02, 77.07) --
	( 78.43, 77.07) --
	( 78.43, 78.04) --
	( 78.46, 78.04) --
	( 78.46, 79.02) --
	( 78.57, 79.02) --
	( 78.57, 79.99) --
	( 78.65, 79.99) --
	( 78.65, 80.97) --
	( 81.03, 80.97) --
	( 81.03, 81.95) --
	( 81.58, 81.95) --
	( 81.58, 82.92) --
	( 81.82, 82.92) --
	( 81.82, 83.90) --
	( 81.91, 83.90) --
	( 81.91, 84.88) --
	( 82.88, 84.88) --
	( 82.88, 85.85) --
	( 82.94, 85.85) --
	( 82.94, 86.83) --
	( 83.06, 86.83) --
	( 83.06, 87.80) --
	( 84.23, 87.80) --
	( 84.23, 88.78) --
	( 84.98, 88.78) --
	( 84.98, 89.76) --
	( 85.05, 89.76) --
	( 85.05, 90.73) --
	( 85.19, 90.73) --
	( 85.19, 91.71) --
	( 86.17, 91.71) --
	( 86.17, 92.68) --
	( 86.36, 92.68) --
	( 86.36, 93.66) --
	( 87.16, 93.66) --
	( 87.16, 94.64) --
	( 87.17, 94.64) --
	( 87.17, 95.61) --
	( 87.18, 95.61) --
	( 87.18, 96.59) --
	( 87.19, 96.59) --
	( 87.19, 97.57) --
	( 87.21, 97.57) --
	( 87.21, 98.54) --
	( 87.69, 98.54) --
	( 87.69, 99.52) --
	( 88.19, 99.52) --
	( 88.19,100.49) --
	( 92.16,100.49) --
	( 92.16,101.47) --
	( 93.89,101.47) --
	( 93.89,102.45) --
	( 94.18,102.45) --
	( 94.18,103.42) --
	( 94.42,103.42) --
	( 94.42,104.40) --
	(134.94,104.40) --
	(134.94,104.40);
\definecolor{drawColor}{RGB}{252,141,89}

\path[draw=drawColor,line width= 1.4pt,dash pattern=on 2pt off 2pt ,line join=round] ( 59.75, 38.02) --
	( 60.51, 38.02) --
	( 60.51, 39.00) --
	( 61.10, 39.00) --
	( 61.10, 39.97) --
	( 61.51, 39.97) --
	( 61.51, 40.95) --
	( 63.55, 40.95) --
	( 63.55, 41.92) --
	( 64.27, 41.92) --
	( 64.27, 42.90) --
	( 64.87, 42.90) --
	( 64.87, 43.88) --
	( 65.04, 43.88) --
	( 65.04, 44.85) --
	( 66.20, 44.85) --
	( 66.20, 45.83) --
	( 66.21, 45.83) --
	( 66.21, 46.80) --
	( 66.95, 46.80) --
	( 66.95, 47.78) --
	( 67.08, 47.78) --
	( 67.08, 48.76) --
	( 68.18, 48.76) --
	( 68.18, 49.73) --
	( 69.33, 49.73) --
	( 69.33, 50.71) --
	( 69.46, 50.71) --
	( 69.46, 51.69) --
	( 69.68, 51.69) --
	( 69.68, 52.66) --
	( 70.00, 52.66) --
	( 70.00, 53.64) --
	( 71.33, 53.64) --
	( 71.33, 54.61) --
	( 71.76, 54.61) --
	( 71.76, 55.59) --
	( 72.54, 55.59) --
	( 72.54, 56.57) --
	( 73.01, 56.57) --
	( 73.01, 57.54) --
	( 73.15, 57.54) --
	( 73.15, 58.52) --
	( 74.27, 58.52) --
	( 74.27, 59.49) --
	( 75.66, 59.49) --
	( 75.66, 60.47) --
	( 76.26, 60.47) --
	( 76.26, 61.45) --
	( 76.47, 61.45) --
	( 76.47, 62.42) --
	( 77.57, 62.42) --
	( 77.57, 63.40) --
	( 77.66, 63.40) --
	( 77.66, 64.38) --
	( 80.51, 64.38) --
	( 80.51, 65.35) --
	( 80.95, 65.35) --
	( 80.95, 66.33) --
	( 81.73, 66.33) --
	( 81.73, 67.30) --
	( 82.03, 67.30) --
	( 82.03, 68.28) --
	( 82.05, 68.28) --
	( 82.05, 69.26) --
	( 82.45, 69.26) --
	( 82.45, 70.23) --
	( 82.69, 70.23) --
	( 82.69, 71.21) --
	( 82.80, 71.21) --
	( 82.80, 72.18) --
	( 82.97, 72.18) --
	( 82.97, 73.16) --
	( 82.99, 73.16) --
	( 82.99, 74.14) --
	( 84.02, 74.14) --
	( 84.02, 75.11) --
	( 84.85, 75.11) --
	( 84.85, 76.09) --
	( 85.05, 76.09) --
	( 85.05, 77.07) --
	( 85.72, 77.07) --
	( 85.72, 78.04) --
	( 86.28, 78.04) --
	( 86.28, 79.02) --
	( 88.10, 79.02) --
	( 88.10, 79.99) --
	( 88.36, 79.99) --
	( 88.36, 80.97) --
	( 88.49, 80.97) --
	( 88.49, 81.95) --
	( 89.47, 81.95) --
	( 89.47, 82.92) --
	( 89.58, 82.92) --
	( 89.58, 83.90) --
	( 91.92, 83.90) --
	( 91.92, 84.88) --
	( 91.97, 84.88) --
	( 91.97, 85.85) --
	( 92.87, 85.85) --
	( 92.87, 86.83) --
	( 93.57, 86.83) --
	( 93.57, 87.80) --
	( 99.37, 87.80) --
	( 99.37, 88.78) --
	(100.03, 88.78) --
	(100.03, 89.76) --
	(100.41, 89.76) --
	(100.41, 90.73) --
	(101.87, 90.73) --
	(101.87, 91.71) --
	(102.25, 91.71) --
	(102.25, 92.68) --
	(103.02, 92.68) --
	(103.02, 93.66) --
	(105.82, 93.66) --
	(105.82, 94.64) --
	(134.94, 94.64) --
	(134.94, 94.64);
\definecolor{drawColor}{RGB}{69,117,180}

\node[text=drawColor,anchor=base west,inner sep=0pt, outer sep=0pt, scale=  1.00] at ( 40,120) {69/100};
\definecolor{drawColor}{RGB}{252,141,89}

\node[text=drawColor,anchor=base west,inner sep=0pt, outer sep=0pt, scale=  1.00] at ( 40,110) {59/100};
\end{scope}
\begin{scope}
\path[clip] (  0.00,  0.00) rectangle (144.54,144.54);
\definecolor{drawColor}{gray}{0.30}

\end{scope}
\begin{scope}
\path[clip] (  0.00,  0.00) rectangle (144.54,144.54);
\definecolor{drawColor}{gray}{0.30}

\node[text=drawColor,anchor=base,inner sep=0pt, outer sep=0pt, scale=  1.00] at ( 63.89, 20.78) {$10^3$};

\node[text=drawColor,anchor=base,inner sep=0pt, outer sep=0pt, scale=  1.00] at ( 99.42, 20.78) {$10^5$};

\node[text=drawColor,anchor=base,inner sep=0pt, outer sep=0pt, scale=  1.00] at (134.94, 20.78) {$10^7$};
\end{scope}
\begin{scope}
\path[clip] (  0.00,  0.00) rectangle (144.54,144.54);
\definecolor{drawColor}{RGB}{0,0,0}

\node[text=drawColor,anchor=base,inner sep=0pt, outer sep=0pt, scale=  1.00] at ( 88.93,  6.94) {Branch-and-bound nodes};
\end{scope}
\begin{scope}
\path[clip] (  0.00,  0.00) rectangle (144.54,144.54);
\definecolor{drawColor}{RGB}{0,0,0}

\end{scope}
\end{tikzpicture}}%
    \scalebox{1.0}{\begin{tikzpicture}[x=1pt,y=1pt]
\definecolor{fillColor}{RGB}{255,255,255}
\path[use as bounding box,fill=fillColor,fill opacity=0.00] (0,0) rectangle (113.54,144.54);
\begin{scope}
\path[clip] ( 38.33, 32.16) rectangle (139.54,139.54);
\definecolor{drawColor}{gray}{0.92}

\path[draw=drawColor,line width= 0.3pt,line join=round] ( 38.33, 49.25) --
	(139.54, 49.25);

\path[draw=drawColor,line width= 0.3pt,line join=round] ( 38.33, 73.65) --
	(139.54, 73.65);

\path[draw=drawColor,line width= 0.3pt,line join=round] ( 38.33, 98.05) --
	(139.54, 98.05);

\path[draw=drawColor,line width= 0.3pt,line join=round] ( 38.33,122.46) --
	(139.54,122.46);

\path[draw=drawColor,line width= 0.3pt,line join=round] ( 42.93, 32.16) --
	( 42.93,139.54);

\path[draw=drawColor,line width= 0.3pt,line join=round] ( 69.22, 32.16) --
	( 69.22,139.54);

\path[draw=drawColor,line width= 0.3pt,line join=round] ( 95.51, 32.16) --
	( 95.51,139.54);

\path[draw=drawColor,line width= 0.3pt,line join=round] (121.79, 32.16) --
	(121.79,139.54);

\path[draw=drawColor,line width= 0.5pt,line join=round] ( 38.33, 37.04) --
	(139.54, 37.04);

\path[draw=drawColor,line width= 0.5pt,line join=round] ( 38.33, 61.45) --
	(139.54, 61.45);

\path[draw=drawColor,line width= 0.5pt,line join=round] ( 38.33, 85.85) --
	(139.54, 85.85);

\path[draw=drawColor,line width= 0.5pt,line join=round] ( 38.33,110.26) --
	(139.54,110.26);

\path[draw=drawColor,line width= 0.5pt,line join=round] ( 38.33,134.66) --
	(139.54,134.66);

\path[draw=drawColor,line width= 0.5pt,line join=round] ( 56.07, 32.16) --
	( 56.07,139.54);

\path[draw=drawColor,line width= 0.5pt,line join=round] ( 82.36, 32.16) --
	( 82.36,139.54);

\path[draw=drawColor,line width= 0.5pt,line join=round] (108.65, 32.16) --
	(108.65,139.54);

\path[draw=drawColor,line width= 0.5pt,line join=round] (134.94, 32.16) --
	(134.94,139.54);
\definecolor{drawColor}{RGB}{69,117,180}

\path[draw=drawColor,line width= 1.4pt,line join=round] ( 42.93, 79.99) --
	( 55.47, 79.99) --
	( 55.47, 80.97) --
	( 57.11, 80.97) --
	( 57.11, 81.95) --
	( 57.57, 81.95) --
	( 57.57, 82.92) --
	( 59.10, 82.92) --
	( 59.10, 83.90) --
	( 61.95, 83.90) --
	( 61.95, 86.83) --
	( 62.53, 86.83) --
	( 62.53, 87.80) --
	( 66.67, 87.80) --
	( 66.67, 88.78) --
	( 67.02, 88.78) --
	( 67.02, 89.76) --
	( 67.34, 89.76) --
	( 67.34, 90.73) --
	( 67.73, 90.73) --
	( 67.73, 91.71) --
	( 73.80, 91.71) --
	( 73.80, 92.68) --
	( 73.90, 92.68) --
	( 73.90, 93.66) --
	( 74.05, 93.66) --
	( 74.05, 94.64) --
	( 75.41, 94.64) --
	( 75.41, 95.61) --
	( 75.71, 95.61) --
	( 75.71, 96.59) --
	( 75.75, 96.59) --
	( 75.75, 97.57) --
	( 76.30, 97.57) --
	( 76.30, 98.54) --
	( 76.34, 98.54) --
	( 76.34, 99.52) --
	( 76.47, 99.52) --
	( 76.47,100.49) --
	( 77.30,100.49) --
	( 77.30,101.47) --
	( 78.81,101.47) --
	( 78.81,102.45) --
	( 79.69,102.45) --
	( 79.69,103.42) --
	( 93.24,103.42) --
	( 93.24,104.40) --
	( 94.36,104.40) --
	( 94.36,105.37) --
	( 94.37,105.37) --
	( 94.37,106.35) --
	(101.19,106.35) --
	(101.19,107.33) --
	(101.49,107.33) --
	(101.49,108.30) --
	(103.12,108.30) --
	(103.12,109.28) --
	(109.66,109.28) --
	(109.66,110.26) --
	(114.45,110.26) --
	(114.45,111.23) --
	(116.56,111.23) --
	(116.56,112.21) --
	(125.49,112.21) --
	(125.49,113.18) --
	(134.94,113.18) --
	(134.94,113.18);
\definecolor{drawColor}{RGB}{252,141,89}

\path[draw=drawColor,line width= 1.4pt,dash pattern=on 2pt off 2pt ,line join=round] ( 42.93, 82.92) --
	( 46.88, 82.92) --
	( 46.88, 83.90) --
	( 49.20, 83.90) --
	( 49.20, 84.88) --
	( 52.12, 84.88) --
	( 52.12, 86.83) --
	( 53.16, 86.83) --
	( 53.16, 88.78) --
	( 54.80, 88.78) --
	( 54.80, 89.76) --
	( 56.07, 89.76) --
	( 56.07, 90.73) --
	( 56.62, 90.73) --
	( 56.62, 91.71) --
	( 57.57, 91.71) --
	( 57.57, 92.68) --
	( 57.99, 92.68) --
	( 57.99, 93.66) --
	( 61.74, 93.66) --
	( 61.74, 94.64) --
	( 61.95, 94.64) --
	( 61.95, 95.61) --
	( 62.53, 95.61) --
	( 62.53, 96.59) --
	( 66.40, 96.59) --
	( 66.40, 97.57) --
	( 67.18, 97.57) --
	( 67.18, 98.54) --
	( 67.94, 98.54) --
	( 67.94, 99.52) --
	( 68.98, 99.52) --
	( 68.98,100.49) --
	( 69.66,100.49) --
	( 69.66,101.47) --
	( 73.45,101.47) --
	( 73.45,102.45) --
	( 75.31,102.45) --
	( 75.31,103.42) --
	( 76.00,103.42) --
	( 76.00,104.40) --
	( 77.09,104.40) --
	( 77.09,105.37) --
	( 78.66,105.37) --
	( 78.66,106.35) --
	( 93.55,106.35) --
	( 93.55,107.33) --
	( 95.88,107.33) --
	( 95.88,108.30) --
	( 97.66,108.30) --
	( 97.66,109.28) --
	( 99.43,109.28) --
	( 99.43,110.26) --
	(105.69,110.26) --
	(105.69,111.23) --
	(107.27,111.23) --
	(107.27,112.21) --
	(107.65,112.21) --
	(107.65,113.18) --
	(110.13,113.18) --
	(110.13,114.16) --
	(110.61,114.16) --
	(110.61,115.14) --
	(111.74,115.14) --
	(111.74,116.11) --
	(113.66,116.11) --
	(113.66,117.09) --
	(114.24,117.09) --
	(114.24,118.06) --
	(115.60,118.06) --
	(115.60,119.04) --
	(120.69,119.04) --
	(120.69,120.02) --
	(123.26,120.02) --
	(123.26,120.99) --
	(129.76,120.99) --
	(129.76,121.97) --
	(134.84,121.97) --
	(134.84,122.95) --
	(134.94,122.95) --
	(134.94,122.95);
\definecolor{drawColor}{RGB}{69,117,180}

\node[text=drawColor,anchor=base west,inner sep=0pt, outer sep=0pt, scale=  1.00] at ( 100,50) {78/100};
\definecolor{drawColor}{RGB}{252,141,89}

\node[text=drawColor,anchor=base west,inner sep=0pt, outer sep=0pt, scale=  1.00] at ( 100,40) {88/100};
\end{scope}
\begin{scope}
\path[clip] (  0.00,  0.00) rectangle (144.54,144.54);
\definecolor{drawColor}{gray}{0.30}

\end{scope}
\begin{scope}
\path[clip] (  0.00,  0.00) rectangle (144.54,144.54);
\definecolor{drawColor}{gray}{0.30}

\node[text=drawColor,anchor=base,inner sep=0pt, outer sep=0pt, scale=  1.00] at ( 56.07, 20.78) {$10^1$};

\node[text=drawColor,anchor=base,inner sep=0pt, outer sep=0pt, scale=  1.00] at ( 82.36, 20.78) {$10^3$};

\node[text=drawColor,anchor=base,inner sep=0pt, outer sep=0pt, scale=  1.00] at (108.65, 20.78) {$10^5$};
\node[text=drawColor,anchor=base,inner sep=0pt, outer sep=0pt, scale=  1.00] at (134.94, 20.78) {$10^7$};
\end{scope}
\begin{scope}
\path[clip] (  0.00,  0.00) rectangle (144.54,144.54);
\definecolor{drawColor}{RGB}{0,0,0}

\end{scope}
\begin{scope}
\path[clip] (  0.00,  0.00) rectangle (144.54,144.54);
\definecolor{drawColor}{RGB}{0,0,0}

\end{scope}
\end{tikzpicture}}%
  } \caption{ECDF of BnB nodes for the shortest-path problem (left), the
  knapsack problem (middle), and the portfolio selection problem (right) with
  continuous budgeted uncertainty sets (top) and continuous knapsack uncertainty
  sets (bottom). The numbers in the legend indicate the number of instances
  solved within the time limit of 2 hours out of 100 instances. Solid
  \blue{blue}: \blue{bilevel approach}. Dotted \orange{orange}: \orange{robust
  approach}.}
  \label{fig:num-results:cont:ecdf-nodes}
\end{figure}
\begin{figure}
  \centering
  \makebox[\textwidth][c]{%
    \scalebox{1.0}{\begin{tikzpicture}[x=1pt,y=1pt]
\definecolor{fillColor}{RGB}{255,255,255}
\path[use as bounding box,fill=fillColor,fill opacity=0.00] (0,0) rectangle (144.54,144.54);
\begin{scope}
\path[clip] ( 38.33, 32.16) rectangle (139.54,139.54);
\definecolor{drawColor}{gray}{0.92}

\path[draw=drawColor,line width= 0.3pt,line join=round] ( 38.33, 49.25) --
	(139.54, 49.25);

\path[draw=drawColor,line width= 0.3pt,line join=round] ( 38.33, 73.65) --
	(139.54, 73.65);

\path[draw=drawColor,line width= 0.3pt,line join=round] ( 38.33, 98.05) --
	(139.54, 98.05);

\path[draw=drawColor,line width= 0.3pt,line join=round] ( 38.33,122.46) --
	(139.54,122.46);

\path[draw=drawColor,line width= 0.3pt,line join=round] ( 46.21, 32.16) --
	( 46.21,139.54);

\path[draw=drawColor,line width= 0.3pt,line join=round] ( 75.79, 32.16) --
	( 75.79,139.54);

\path[draw=drawColor,line width= 0.3pt,line join=round] (105.36, 32.16) --
	(105.36,139.54);

\path[draw=drawColor,line width= 0.3pt,line join=round] (134.94, 32.16) --
	(134.94,139.54);

\path[draw=drawColor,line width= 0.5pt,line join=round] ( 38.33, 37.04) --
	(139.54, 37.04);

\path[draw=drawColor,line width= 0.5pt,line join=round] ( 38.33, 61.45) --
	(139.54, 61.45);

\path[draw=drawColor,line width= 0.5pt,line join=round] ( 38.33, 85.85) --
	(139.54, 85.85);

\path[draw=drawColor,line width= 0.5pt,line join=round] ( 38.33,110.26) --
	(139.54,110.26);

\path[draw=drawColor,line width= 0.5pt,line join=round] ( 38.33,134.66) --
	(139.54,134.66);

\path[draw=drawColor,line width= 0.5pt,line join=round] ( 61.00, 32.16) --
	( 61.00,139.54);

\path[draw=drawColor,line width= 0.5pt,line join=round] ( 90.57, 32.16) --
	( 90.57,139.54);

\path[draw=drawColor,line width= 0.5pt,line join=round] (120.15, 32.16) --
	(120.15,139.54);
\definecolor{drawColor}{RGB}{69,117,180}

\path[draw=drawColor,line width= 1.4pt,line join=round] ( 38.33, 47.78) --
	( 51.83, 47.78) --
	( 51.83, 48.76) --
	( 52.35, 48.76) --
	( 52.35, 49.73) --
	( 52.59, 49.73) --
	( 52.59, 50.71) --
	( 54.78, 50.71) --
	( 54.78, 51.69) --
	( 55.43, 51.69) --
	( 55.43, 52.66) --
	( 55.72, 52.66) --
	( 55.72, 53.64) --
	( 56.01, 53.64) --
	( 56.01, 54.61) --
	( 56.67, 54.61) --
	( 56.67, 55.59) --
	( 56.92, 55.59) --
	( 56.92, 57.54) --
	( 60.18, 57.54) --
	( 60.18, 58.52) --
	( 60.53, 58.52) --
	( 60.53, 59.49) --
	( 63.94, 59.49) --
	( 63.94, 60.47) --
	( 64.77, 60.47) --
	( 64.77, 61.45) --
	( 65.09, 61.45) --
	( 65.09, 62.42) --
	( 65.51, 62.42) --
	( 65.51, 63.40) --
	( 67.23, 63.40) --
	( 67.23, 64.38) --
	( 67.26, 64.38) --
	( 67.26, 65.35) --
	( 68.51, 65.35) --
	( 68.51, 66.33) --
	( 69.52, 66.33) --
	( 69.52, 67.30) --
	( 70.73, 67.30) --
	( 70.73, 68.28) --
	( 71.24, 68.28) --
	( 71.24, 69.26) --
	( 71.71, 69.26) --
	( 71.71, 70.23) --
	( 72.38, 70.23) --
	( 72.38, 71.21) --
	( 73.14, 71.21) --
	( 73.14, 72.18) --
	( 73.95, 72.18) --
	( 73.95, 73.16) --
	( 75.02, 73.16) --
	( 75.02, 74.14) --
	( 75.14, 74.14) --
	( 75.14, 75.11) --
	( 75.19, 75.11) --
	( 75.19, 76.09) --
	( 75.37, 76.09) --
	( 75.37, 77.07) --
	( 75.82, 77.07) --
	( 75.82, 78.04) --
	( 77.08, 78.04) --
	( 77.08, 79.02) --
	( 77.36, 79.02) --
	( 77.36, 79.99) --
	( 78.61, 79.99) --
	( 78.61, 80.97) --
	( 79.60, 80.97) --
	( 79.60, 81.95) --
	( 79.92, 81.95) --
	( 79.92, 82.92) --
	( 80.00, 82.92) --
	( 80.00, 83.90) --
	( 80.32, 83.90) --
	( 80.32, 84.88) --
	( 80.84, 84.88) --
	( 80.84, 85.85) --
	( 80.93, 85.85) --
	( 80.93, 86.83) --
	( 81.40, 86.83) --
	( 81.40, 87.80) --
	( 82.41, 87.80) --
	( 82.41, 88.78) --
	( 82.50, 88.78) --
	( 82.50, 89.76) --
	( 82.51, 89.76) --
	( 82.51, 90.73) --
	( 82.82, 90.73) --
	( 82.82, 91.71) --
	( 82.85, 91.71) --
	( 82.85, 92.68) --
	( 83.45, 92.68) --
	( 83.45, 93.66) --
	( 84.46, 93.66) --
	( 84.46, 94.64) --
	( 85.18, 94.64) --
	( 85.18, 95.61) --
	( 85.71, 95.61) --
	( 85.71, 96.59) --
	( 85.73, 96.59) --
	( 85.73, 97.57) --
	( 86.65, 97.57) --
	( 86.65, 98.54) --
	( 86.68, 98.54) --
	( 86.68, 99.52) --
	( 86.84, 99.52) --
	( 86.84,100.49) --
	( 87.08,100.49) --
	( 87.08,101.47) --
	( 87.13,101.47) --
	( 87.13,102.45) --
	( 87.46,102.45) --
	( 87.46,103.42) --
	( 88.42,103.42) --
	( 88.42,104.40) --
	( 89.56,104.40) --
	( 89.56,105.37) --
	( 89.58,105.37) --
	( 89.58,106.35) --
	( 89.60,106.35) --
	( 89.60,107.33) --
	( 90.43,107.33) --
	( 90.43,108.30) --
	( 91.14,108.30) --
	( 91.14,109.28) --
	( 91.47,109.28) --
	( 91.47,110.26) --
	( 91.77,110.26) --
	( 91.77,111.23) --
	( 92.99,111.23) --
	( 92.99,112.21) --
	( 93.54,112.21) --
	( 93.54,113.18) --
	( 94.55,113.18) --
	( 94.55,114.16) --
	( 95.03,114.16) --
	( 95.03,115.14) --
	( 95.21,115.14) --
	( 95.21,116.11) --
	( 95.87,116.11) --
	( 95.87,117.09) --
	( 96.04,117.09) --
	( 96.04,118.06) --
	( 96.78,118.06) --
	( 96.78,119.04) --
	( 98.45,119.04) --
	( 98.45,120.02) --
	(100.10,120.02) --
	(100.10,120.99) --
	(100.44,120.99) --
	(100.44,121.97) --
	(100.57,121.97) --
	(100.57,122.95) --
	(100.92,122.95) --
	(100.92,123.92) --
	(101.30,123.92) --
	(101.30,124.90) --
	(101.43,124.90) --
	(101.43,125.87) --
	(102.68,125.87) --
	(102.68,126.85) --
	(103.01,126.85) --
	(103.01,127.83) --
	(114.29,127.83) --
	(114.29,128.80) --
	(116.22,128.80) --
	(116.22,129.78) --
	(134.94,129.78) --
	(134.94,129.78);
\definecolor{drawColor}{RGB}{252,141,89}

\path[draw=drawColor,line width= 1.4pt,dash pattern=on 2pt off 2pt ,line join=round] ( 42.93, 46.80) --
	( 69.85, 46.80) --
	( 69.85, 47.78) --
	( 70.48, 47.78) --
	( 70.48, 48.76) --
	( 82.28, 48.76) --
	( 82.28, 49.73) --
	( 84.05, 49.73) --
	( 84.05, 50.71) --
	( 84.36, 50.71) --
	( 84.36, 51.69) --
	( 86.86, 51.69) --
	( 86.86, 52.66) --
	( 87.03, 52.66) --
	( 87.03, 53.64) --
	( 87.14, 53.64) --
	( 87.14, 54.61) --
	( 87.22, 54.61) --
	( 87.22, 55.59) --
	( 87.32, 55.59) --
	( 87.32, 56.57) --
	( 87.43, 56.57) --
	( 87.43, 57.54) --
	( 88.74, 57.54) --
	( 88.74, 58.52) --
	( 95.38, 58.52) --
	( 95.38, 59.49) --
	(114.47, 59.49) --
	(114.47, 60.47) --
	(114.50, 60.47) --
	(114.50, 61.45) --
	(119.14, 61.45) --
	(119.14, 62.42) --
	(122.07, 62.42) --
	(122.07, 63.40) --
	(127.29, 63.40) --
	(127.29, 64.38) --
	(127.36, 64.38) --
	(127.36, 65.35) --
	(134.94, 65.35) --
	(134.94, 65.35);
\definecolor{drawColor}{RGB}{215,25,28}

\path[draw=drawColor,line width= 1.4pt,dash pattern=on 4pt off 2pt ,line join=round] ( 46.21, 46.80) --
	( 71.54, 46.80) --
	( 71.54, 47.78) --
	( 71.64, 47.78) --
	( 71.64, 48.76) --
	( 72.10, 48.76) --
	( 72.10, 49.73) --
	( 72.63, 49.73) --
	( 72.63, 50.71) --
	( 74.43, 50.71) --
	( 74.43, 51.69) --
	( 74.74, 51.69) --
	( 74.74, 52.66) --
	( 75.09, 52.66) --
	( 75.09, 53.64) --
	( 75.21, 53.64) --
	( 75.21, 54.61) --
	( 77.17, 54.61) --
	( 77.17, 55.59) --
	( 88.51, 55.59) --
	( 88.51, 56.57) --
	( 95.04, 56.57) --
	( 95.04, 57.54) --
	( 97.58, 57.54) --
	( 97.58, 58.52) --
	( 99.11, 58.52) --
	( 99.11, 59.49) --
	(105.15, 59.49) --
	(105.15, 60.47) --
	(110.51, 60.47) --
	(110.51, 61.45) --
	(110.90, 61.45) --
	(110.90, 62.42) --
	(111.60, 62.42) --
	(111.60, 63.40) --
	(120.94, 63.40) --
	(120.94, 64.38) --
	(121.65, 64.38) --
	(121.65, 65.35) --
	(122.41, 65.35) --
	(122.41, 66.33) --
	(126.93, 66.33) --
	(126.93, 67.30) --
	(129.60, 67.30) --
	(129.60, 68.28) --
	(134.94, 68.28) --
	(134.94, 68.28);
\definecolor{drawColor}{RGB}{69,117,180}

\node[text=drawColor,anchor=base west,inner sep=0pt, outer sep=0pt, scale=  1.00] at ( 40,120) {95/100};
\definecolor{drawColor}{RGB}{252,141,89}

\node[text=drawColor,anchor=base west,inner sep=0pt, outer sep=0pt, scale=  1.00] at ( 40,110) {29/100};
\definecolor{drawColor}{RGB}{215,25,28}

\node[text=drawColor,anchor=base west,inner sep=0pt, outer sep=0pt, scale=  1.00] at ( 40,100) {32/100};
\end{scope}
\begin{scope}
\path[clip] (  0.00,  0.00) rectangle (144.54,144.54);
\definecolor{drawColor}{gray}{0.30}

\node[text=drawColor,anchor=base east,inner sep=0pt, outer sep=0pt, scale=  1.00] at ( 33.83, 33.60) {0};

\node[text=drawColor,anchor=base east,inner sep=0pt, outer sep=0pt, scale=  1.00] at ( 33.83, 58.00) {25};

\node[text=drawColor,anchor=base east,inner sep=0pt, outer sep=0pt, scale=  1.00] at ( 33.83, 82.41) {50};

\node[text=drawColor,anchor=base east,inner sep=0pt, outer sep=0pt, scale=  1.00] at ( 33.83,106.81) {75};

\node[text=drawColor,anchor=base east,inner sep=0pt, outer sep=0pt, scale=  1.00] at ( 33.83,131.22) {100};
\end{scope}
\begin{scope}
\path[clip] (  0.00,  0.00) rectangle (144.54,144.54);
\definecolor{drawColor}{gray}{0.30}

\node[text=drawColor,anchor=base,inner sep=0pt, outer sep=0pt, scale=  1.00] at ( 61.00, 20.78) {$10^2$};

\node[text=drawColor,anchor=base,inner sep=0pt, outer sep=0pt, scale=  1.00] at ( 90.57, 20.78) {$10^4$};

\node[text=drawColor,anchor=base,inner sep=0pt, outer sep=0pt, scale=  1.00] at (120.15, 20.78) {$10^6$};
\end{scope}
\begin{scope}
\path[clip] (  0.00,  0.00) rectangle (144.54,144.54);
\definecolor{drawColor}{RGB}{0,0,0}

\node[text=drawColor,anchor=base,inner sep=0pt, outer sep=0pt, scale=  1.00] at ( 88.93,  6.94) {Branch-and-bound nodes};
\end{scope}
\begin{scope}
\path[clip] (  0.00,  0.00) rectangle (144.54,144.54);
\definecolor{drawColor}{RGB}{0,0,0}

\node[text=drawColor,rotate= 90.00,anchor=base,inner sep=0pt, outer sep=0pt, scale=  1.00] at ( 11.89, 85.85) {\# of solved instances};
\end{scope}
\end{tikzpicture}}%
    \scalebox{1.0}{\begin{tikzpicture}[x=1pt,y=1pt]
\definecolor{fillColor}{RGB}{255,255,255}
\path[use as bounding box,fill=fillColor,fill opacity=0.00] (0,0) rectangle (144.54,144.54);
\begin{scope}
\path[clip] ( 38.33, 32.16) rectangle (139.54,139.54);
\definecolor{drawColor}{gray}{0.92}

\path[draw=drawColor,line width= 0.3pt,line join=round] ( 38.33, 49.25) --
	(139.54, 49.25);

\path[draw=drawColor,line width= 0.3pt,line join=round] ( 38.33, 73.65) --
	(139.54, 73.65);

\path[draw=drawColor,line width= 0.3pt,line join=round] ( 38.33, 98.05) --
	(139.54, 98.05);

\path[draw=drawColor,line width= 0.3pt,line join=round] ( 38.33,122.46) --
	(139.54,122.46);

\path[draw=drawColor,line width= 0.3pt,line join=round] ( 48.65, 32.16) --
	( 48.65,139.54);

\path[draw=drawColor,line width= 0.3pt,line join=round] ( 77.41, 32.16) --
	( 77.41,139.54);

\path[draw=drawColor,line width= 0.3pt,line join=round] (106.18, 32.16) --
	(106.18,139.54);

\path[draw=drawColor,line width= 0.3pt,line join=round] (134.94, 32.16) --
	(134.94,139.54);

\path[draw=drawColor,line width= 0.5pt,line join=round] ( 38.33, 37.04) --
	(139.54, 37.04);

\path[draw=drawColor,line width= 0.5pt,line join=round] ( 38.33, 61.45) --
	(139.54, 61.45);

\path[draw=drawColor,line width= 0.5pt,line join=round] ( 38.33, 85.85) --
	(139.54, 85.85);

\path[draw=drawColor,line width= 0.5pt,line join=round] ( 38.33,110.26) --
	(139.54,110.26);

\path[draw=drawColor,line width= 0.5pt,line join=round] ( 38.33,134.66) --
	(139.54,134.66);

\path[draw=drawColor,line width= 0.5pt,line join=round] ( 63.03, 32.16) --
	( 63.03,139.54);

\path[draw=drawColor,line width= 0.5pt,line join=round] ( 91.80, 32.16) --
	( 91.80,139.54);

\path[draw=drawColor,line width= 0.5pt,line join=round] (120.56, 32.16) --
	(120.56,139.54);
\definecolor{drawColor}{RGB}{69,117,180}

\path[draw=drawColor,line width= 1.4pt,line join=round] ( 38.33, 46.80) --
	( 42.93, 46.80) --
	( 42.93, 48.76) --
	( 44.32, 48.76) --
	( 44.32, 49.73) --
	( 48.65, 49.73) --
	( 48.65, 50.71) --
	( 53.58, 50.71) --
	( 53.58, 51.69) --
	( 55.72, 51.69) --
	( 55.72, 52.66) --
	( 56.11, 52.66) --
	( 56.11, 53.64) --
	( 74.20, 53.64) --
	( 74.20, 54.61) --
	( 76.32, 54.61) --
	( 76.32, 55.59) --
	( 77.03, 55.59) --
	( 77.03, 56.57) --
	( 82.72, 56.57) --
	( 82.72, 57.54) --
	( 87.23, 57.54) --
	( 87.23, 58.52) --
	( 87.36, 58.52) --
	( 87.36, 59.49) --
	( 95.27, 59.49) --
	( 95.27, 60.47) --
	( 95.68, 60.47) --
	( 95.68, 61.45) --
	( 97.26, 61.45) --
	( 97.26, 62.42) --
	(102.41, 62.42) --
	(102.41, 63.40) --
	(106.27, 63.40) --
	(106.27, 64.38) --
	(106.41, 64.38) --
	(106.41, 65.35) --
	(106.85, 65.35) --
	(106.85, 66.33) --
	(109.92, 66.33) --
	(109.92, 67.30) --
	(114.34, 67.30) --
	(114.34, 68.28) --
	(118.19, 68.28) --
	(118.19, 69.26) --
	(121.07, 69.26) --
	(121.07, 70.23) --
	(122.61, 70.23) --
	(122.61, 71.21) --
	(123.00, 71.21) --
	(123.00, 72.18) --
	(134.94, 72.18) --
	(134.94, 72.18);
\definecolor{drawColor}{RGB}{252,141,89}

\path[draw=drawColor,line width= 1.4pt,dash pattern=on 2pt off 2pt ,line join=round] ( 67.84, 38.02) --
	( 68.29, 38.02) --
	( 68.29, 39.00) --
	( 68.85, 39.00) --
	( 68.85, 39.97) --
	( 69.21, 39.97) --
	( 69.21, 40.95) --
	( 69.46, 40.95) --
	( 69.46, 41.92) --
	( 69.49, 41.92) --
	( 69.49, 42.90) --
	( 69.62, 42.90) --
	( 69.62, 43.88) --
	( 70.28, 43.88) --
	( 70.28, 44.85) --
	( 71.69, 44.85) --
	( 71.69, 45.83) --
	( 77.46, 45.83) --
	( 77.46, 46.80) --
	( 77.82, 46.80) --
	( 77.82, 47.78) --
	( 78.56, 47.78) --
	( 78.56, 48.76) --
	( 79.21, 48.76) --
	( 79.21, 49.73) --
	( 80.53, 49.73) --
	( 80.53, 50.71) --
	( 81.19, 50.71) --
	( 81.19, 51.69) --
	( 83.54, 51.69) --
	( 83.54, 52.66) --
	( 85.82, 52.66) --
	( 85.82, 53.64) --
	( 97.57, 53.64) --
	( 97.57, 54.61) --
	( 98.24, 54.61) --
	( 98.24, 55.59) --
	(100.44, 55.59) --
	(100.44, 56.57) --
	(106.29, 56.57) --
	(106.29, 57.54) --
	(113.69, 57.54) --
	(113.69, 58.52) --
	(115.73, 58.52) --
	(115.73, 59.49) --
	(131.02, 59.49) --
	(131.02, 60.47) --
	(131.65, 60.47) --
	(131.65, 61.45) --
	(134.94, 61.45) --
	(134.94, 61.45);
\definecolor{drawColor}{RGB}{215,25,28}

\path[draw=drawColor,line width= 1.4pt,dash pattern=on 4pt off 2pt ,line join=round] ( 69.07, 38.02) --
	( 69.40, 38.02) --
	( 69.40, 39.00) --
	( 70.22, 39.00) --
	( 70.22, 39.97) --
	( 70.56, 39.97) --
	( 70.56, 40.95) --
	( 70.82, 40.95) --
	( 70.82, 41.92) --
	( 70.87, 41.92) --
	( 70.87, 42.90) --
	( 71.69, 42.90) --
	( 71.69, 43.88) --
	( 71.94, 43.88) --
	( 71.94, 44.85) --
	( 75.47, 44.85) --
	( 75.47, 45.83) --
	( 79.69, 45.83) --
	( 79.69, 46.80) --
	( 80.58, 46.80) --
	( 80.58, 47.78) --
	( 80.61, 47.78) --
	( 80.61, 48.76) --
	( 82.53, 48.76) --
	( 82.53, 49.73) --
	( 85.99, 49.73) --
	( 85.99, 50.71) --
	( 91.75, 50.71) --
	( 91.75, 51.69) --
	(121.75, 51.69) --
	(121.75, 52.66) --
	(132.61, 52.66) --
	(132.61, 53.64) --
	(134.94, 53.64) --
	(134.94, 53.64);
\definecolor{drawColor}{RGB}{69,117,180}

\node[text=drawColor,anchor=base west,inner sep=0pt, outer sep=0pt, scale=  1.00] at ( 40,120) {36/100};
\definecolor{drawColor}{RGB}{252,141,89}

\node[text=drawColor,anchor=base west,inner sep=0pt, outer sep=0pt, scale=  1.00] at ( 40,110) {25/100};
\definecolor{drawColor}{RGB}{215,25,28}

\node[text=drawColor,anchor=base west,inner sep=0pt, outer sep=0pt, scale=  1.00] at ( 40,100) {17/100};
\end{scope}
\begin{scope}
\path[clip] (  0.00,  0.00) rectangle (144.54,144.54);
\definecolor{drawColor}{gray}{0.30}

\node[text=drawColor,anchor=base east,inner sep=0pt, outer sep=0pt, scale=  1.00] at ( 33.83, 33.60) {0};

\node[text=drawColor,anchor=base east,inner sep=0pt, outer sep=0pt, scale=  1.00] at ( 33.83, 58.00) {25};

\node[text=drawColor,anchor=base east,inner sep=0pt, outer sep=0pt, scale=  1.00] at ( 33.83, 82.41) {50};

\node[text=drawColor,anchor=base east,inner sep=0pt, outer sep=0pt, scale=  1.00] at ( 33.83,106.81) {75};

\node[text=drawColor,anchor=base east,inner sep=0pt, outer sep=0pt, scale=  1.00] at ( 33.83,131.22) {100};
\end{scope}
\begin{scope}
\path[clip] (  0.00,  0.00) rectangle (144.54,144.54);
\definecolor{drawColor}{gray}{0.30}

\node[text=drawColor,anchor=base,inner sep=0pt, outer sep=0pt, scale=  1.00] at ( 63.03, 20.78) {$10^2$};

\node[text=drawColor,anchor=base,inner sep=0pt, outer sep=0pt, scale=  1.00] at ( 91.80, 20.78) {$10^4$};

\node[text=drawColor,anchor=base,inner sep=0pt, outer sep=0pt, scale=  1.00] at (120.56, 20.78) {$10^6$};
\end{scope}
\begin{scope}
\path[clip] (  0.00,  0.00) rectangle (144.54,144.54);
\definecolor{drawColor}{RGB}{0,0,0}

\node[text=drawColor,anchor=base,inner sep=0pt, outer sep=0pt, scale=  1.00] at ( 88.93,  6.94) {Branch-and-bound nodes};
\end{scope}
\end{tikzpicture}}%
  }
  \makebox[\textwidth][c]{%
    \scalebox{1.0}{\begin{tikzpicture}[x=1pt,y=1pt]
\definecolor{fillColor}{RGB}{255,255,255}
\path[use as bounding box,fill=fillColor,fill opacity=0.00] (0,0) rectangle (144.54,144.54);
\begin{scope}
\path[clip] ( 38.33, 32.16) rectangle (139.54,139.54);
\definecolor{drawColor}{gray}{0.92}

\path[draw=drawColor,line width= 0.3pt,line join=round] ( 38.33, 49.25) --
	(139.54, 49.25);

\path[draw=drawColor,line width= 0.3pt,line join=round] ( 38.33, 73.65) --
	(139.54, 73.65);

\path[draw=drawColor,line width= 0.3pt,line join=round] ( 38.33, 98.05) --
	(139.54, 98.05);

\path[draw=drawColor,line width= 0.3pt,line join=round] ( 38.33,122.46) --
	(139.54,122.46);

\path[draw=drawColor,line width= 0.3pt,line join=round] ( 42.93, 32.16) --
	( 42.93,139.54);

\path[draw=drawColor,line width= 0.3pt,line join=round] ( 69.18, 32.16) --
	( 69.18,139.54);

\path[draw=drawColor,line width= 0.3pt,line join=round] ( 95.44, 32.16) --
	( 95.44,139.54);

\path[draw=drawColor,line width= 0.3pt,line join=round] (121.69, 32.16) --
	(121.69,139.54);

\path[draw=drawColor,line width= 0.5pt,line join=round] ( 38.33, 37.04) --
	(139.54, 37.04);

\path[draw=drawColor,line width= 0.5pt,line join=round] ( 38.33, 61.45) --
	(139.54, 61.45);

\path[draw=drawColor,line width= 0.5pt,line join=round] ( 38.33, 85.85) --
	(139.54, 85.85);

\path[draw=drawColor,line width= 0.5pt,line join=round] ( 38.33,110.26) --
	(139.54,110.26);

\path[draw=drawColor,line width= 0.5pt,line join=round] ( 38.33,134.66) --
	(139.54,134.66);

\path[draw=drawColor,line width= 0.5pt,line join=round] ( 56.05, 32.16) --
	( 56.05,139.54);

\path[draw=drawColor,line width= 0.5pt,line join=round] ( 82.31, 32.16) --
	( 82.31,139.54);

\path[draw=drawColor,line width= 0.5pt,line join=round] (108.56, 32.16) --
	(108.56,139.54);

\path[draw=drawColor,line width= 0.5pt,line join=round] (134.82, 32.16) --
	(134.82,139.54);
\definecolor{drawColor}{RGB}{69,117,180}

\path[draw=drawColor,line width= 1.4pt,line join=round] ( 38.33,128.80) --
	( 69.78,128.80) --
	( 69.78,129.78) --
	(134.82,129.78) --
	(134.82,129.78);
\definecolor{drawColor}{RGB}{252,141,89}

\path[draw=drawColor,line width= 1.4pt,dash pattern=on 2pt off 2pt ,line join=round] ( 49.19, 46.80) --
	( 75.03, 46.80) --
	( 75.03, 47.78) --
	( 75.76, 47.78) --
	( 75.76, 48.76) --
	( 76.07, 48.76) --
	( 76.07, 49.73) --
	( 76.24, 49.73) --
	( 76.24, 50.71) --
	( 76.29, 50.71) --
	( 76.29, 51.69) --
	( 76.31, 51.69) --
	( 76.31, 52.66) --
	( 76.58, 52.66) --
	( 76.58, 53.64) --
	( 76.61, 53.64) --
	( 76.61, 54.61) --
	( 76.67, 54.61) --
	( 76.67, 55.59) --
	( 77.03, 55.59) --
	( 77.03, 56.57) --
	( 87.31, 56.57) --
	( 87.31, 57.54) --
	( 87.66, 57.54) --
	( 87.66, 58.52) --
	( 91.53, 58.52) --
	( 91.53, 59.49) --
	( 92.17, 59.49) --
	( 92.17, 60.47) --
	( 92.19, 60.47) --
	( 92.19, 61.45) --
	( 92.19, 61.45) --
	( 92.19, 62.42) --
	( 92.45, 62.42) --
	( 92.45, 63.40) --
	( 92.84, 63.40) --
	( 92.84, 64.38) --
	( 93.05, 64.38) --
	( 93.05, 65.35) --
	( 93.12, 65.35) --
	( 93.12, 66.33) --
	( 93.22, 66.33) --
	( 93.22, 67.30) --
	( 93.48, 67.30) --
	( 93.48, 68.28) --
	( 93.69, 68.28) --
	( 93.69, 69.26) --
	( 94.88, 69.26) --
	( 94.88, 70.23) --
	( 95.13, 70.23) --
	( 95.13, 71.21) --
	( 96.24, 71.21) --
	( 96.24, 72.18) --
	( 97.10, 72.18) --
	( 97.10, 73.16) --
	( 98.18, 73.16) --
	( 98.18, 74.14) --
	( 98.36, 74.14) --
	( 98.36, 75.11) --
	(100.14, 75.11) --
	(100.14, 76.09) --
	(103.20, 76.09) --
	(103.20, 77.07) --
	(103.87, 77.07) --
	(103.87, 78.04) --
	(104.96, 78.04) --
	(104.96, 79.02) --
	(106.27, 79.02) --
	(106.27, 79.99) --
	(106.40, 79.99) --
	(106.40, 80.97) --
	(107.12, 80.97) --
	(107.12, 81.95) --
	(109.17, 81.95) --
	(109.17, 82.92) --
	(109.36, 82.92) --
	(109.36, 83.90) --
	(109.57, 83.90) --
	(109.57, 84.88) --
	(109.60, 84.88) --
	(109.60, 85.85) --
	(110.33, 85.85) --
	(110.33, 86.83) --
	(111.67, 86.83) --
	(111.67, 87.80) --
	(112.36, 87.80) --
	(112.36, 88.78) --
	(113.25, 88.78) --
	(113.25, 89.76) --
	(115.94, 89.76) --
	(115.94, 90.73) --
	(116.28, 90.73) --
	(116.28, 91.71) --
	(116.61, 91.71) --
	(116.61, 92.68) --
	(117.41, 92.68) --
	(117.41, 93.66) --
	(118.52, 93.66) --
	(118.52, 94.64) --
	(118.72, 94.64) --
	(118.72, 95.61) --
	(119.09, 95.61) --
	(119.09, 96.59) --
	(119.37, 96.59) --
	(119.37, 97.57) --
	(119.68, 97.57) --
	(119.68, 98.54) --
	(121.63, 98.54) --
	(121.63, 99.52) --
	(123.28, 99.52) --
	(123.28,100.49) --
	(123.68,100.49) --
	(123.68,101.47) --
	(126.45,101.47) --
	(126.45,102.45) --
	(126.57,102.45) --
	(126.57,103.42) --
	(127.29,103.42) --
	(127.29,104.40) --
	(127.67,104.40) --
	(127.67,105.37) --
	(128.24,105.37) --
	(128.24,106.35) --
	(128.48,106.35) --
	(128.48,107.33) --
	(128.60,107.33) --
	(128.60,108.30) --
	(130.27,108.30) --
	(130.27,109.28) --
	(131.04,109.28) --
	(131.04,110.26) --
	(132.81,110.26) --
	(132.81,111.23) --
	(133.38,111.23) --
	(133.38,112.21) --
	(133.56,112.21) --
	(133.56,113.18) --
	(134.49,113.18) --
	(134.49,114.16) --
	(134.69,114.16) --
	(134.69,115.14) --
	(134.82,115.14) --
	(134.82,117.09) --
	(134.92,117.09) --
	(134.92,116.11) --
	(134.94,116.11) --
	(134.94,117.09);
\definecolor{drawColor}{RGB}{215,25,28}

\path[draw=drawColor,line width= 1.4pt,dash pattern=on 4pt off 2pt ,line join=round] ( 42.93, 38.02) --
	( 46.88, 38.02) --
	( 46.88, 46.80) --
	( 75.43, 46.80) --
	( 75.43, 47.78) --
	( 75.48, 47.78) --
	( 75.48, 48.76) --
	( 75.54, 48.76) --
	( 75.54, 49.73) --
	( 75.61, 49.73) --
	( 75.61, 50.71) --
	( 75.80, 50.71) --
	( 75.80, 51.69) --
	( 75.83, 51.69) --
	( 75.83, 52.66) --
	( 76.24, 52.66) --
	( 76.24, 53.64) --
	( 76.32, 53.64) --
	( 76.32, 54.61) --
	( 76.39, 54.61) --
	( 76.39, 55.59) --
	( 76.85, 55.59) --
	( 76.85, 56.57) --
	( 91.97, 56.57) --
	( 91.97, 57.54) --
	( 92.20, 57.54) --
	( 92.20, 58.52) --
	( 92.35, 58.52) --
	( 92.35, 59.49) --
	( 92.45, 59.49) --
	( 92.45, 60.47) --
	( 92.78, 60.47) --
	( 92.78, 61.45) --
	( 92.91, 61.45) --
	( 92.91, 62.42) --
	(101.67, 62.42) --
	(101.67, 63.40) --
	(134.82, 63.40) --
	(134.82, 63.40);
\definecolor{drawColor}{RGB}{69,117,180}

\node[text=drawColor,anchor=base west,inner sep=0pt, outer sep=0pt, scale=  1.00] at ( 40,120) {95/100};
\definecolor{drawColor}{RGB}{252,141,89}

\node[text=drawColor,anchor=base west,inner sep=0pt, outer sep=0pt, scale=  1.00] at ( 40,110) {82/100};
\definecolor{drawColor}{RGB}{215,25,28}

\node[text=drawColor,anchor=base west,inner sep=0pt, outer sep=0pt, scale=  1.00] at ( 40,100) {27/100};
\end{scope}
\begin{scope}
\path[clip] (  0.00,  0.00) rectangle (144.54,144.54);
\definecolor{drawColor}{gray}{0.30}

\node[text=drawColor,anchor=base east,inner sep=0pt, outer sep=0pt, scale=  1.00] at ( 33.83, 33.60) {0};

\node[text=drawColor,anchor=base east,inner sep=0pt, outer sep=0pt, scale=  1.00] at ( 33.83, 58.00) {25};

\node[text=drawColor,anchor=base east,inner sep=0pt, outer sep=0pt, scale=  1.00] at ( 33.83, 82.41) {50};

\node[text=drawColor,anchor=base east,inner sep=0pt, outer sep=0pt, scale=  1.00] at ( 33.83,106.81) {75};

\node[text=drawColor,anchor=base east,inner sep=0pt, outer sep=0pt, scale=  1.00] at ( 33.83,131.22) {100};
\end{scope}
\begin{scope}
\path[clip] (  0.00,  0.00) rectangle (144.54,144.54);
\definecolor{drawColor}{gray}{0.30}

\node[text=drawColor,anchor=base,inner sep=0pt, outer sep=0pt, scale=  1.00] at ( 56.05, 20.78) {$10^1$};

\node[text=drawColor,anchor=base,inner sep=0pt, outer sep=0pt, scale=  1.00] at ( 82.31, 20.78) {$10^3$};

\node[text=drawColor,anchor=base,inner sep=0pt, outer sep=0pt, scale=  1.00] at (108.56, 20.78) {$10^5$};

\node[text=drawColor,anchor=base,inner sep=0pt, outer sep=0pt, scale=  1.00] at (134.82, 20.78) {$10^7$};
\end{scope}
\begin{scope}
\path[clip] (  0.00,  0.00) rectangle (144.54,144.54);
\definecolor{drawColor}{RGB}{0,0,0}

\node[text=drawColor,anchor=base,inner sep=0pt, outer sep=0pt, scale=  1.00] at ( 88.93,  6.94) {Branch-and-bound nodes};
\end{scope}
\begin{scope}
\path[clip] (  0.00,  0.00) rectangle (144.54,144.54);
\definecolor{drawColor}{RGB}{0,0,0}

\node[text=drawColor,rotate= 90.00,anchor=base,inner sep=0pt, outer sep=0pt, scale=  1.00] at ( 11.89, 85.85) {\# of solved instances};
\end{scope}
\end{tikzpicture}}%
    \scalebox{1.0}{\begin{tikzpicture}[x=1pt,y=1pt]
\definecolor{fillColor}{RGB}{255,255,255}
\path[use as bounding box,fill=fillColor,fill opacity=0.00] (0,0) rectangle (144.54,144.54);
\begin{scope}
\path[clip] ( 38.33, 32.16) rectangle (139.54,139.54);
\definecolor{drawColor}{gray}{0.92}

\path[draw=drawColor,line width= 0.3pt,line join=round] ( 38.33, 49.25) --
	(139.54, 49.25);

\path[draw=drawColor,line width= 0.3pt,line join=round] ( 38.33, 73.65) --
	(139.54, 73.65);

\path[draw=drawColor,line width= 0.3pt,line join=round] ( 38.33, 98.05) --
	(139.54, 98.05);

\path[draw=drawColor,line width= 0.3pt,line join=round] ( 38.33,122.46) --
	(139.54,122.46);

\path[draw=drawColor,line width= 0.3pt,line join=round] ( 42.42, 32.16) --
	( 42.42,139.54);

\path[draw=drawColor,line width= 0.3pt,line join=round] ( 77.32, 32.16) --
	( 77.32,139.54);

\path[draw=drawColor,line width= 0.3pt,line join=round] (112.23, 32.16) --
	(112.23,139.54);

\path[draw=drawColor,line width= 0.5pt,line join=round] ( 38.33, 37.04) --
	(139.54, 37.04);

\path[draw=drawColor,line width= 0.5pt,line join=round] ( 38.33, 61.45) --
	(139.54, 61.45);

\path[draw=drawColor,line width= 0.5pt,line join=round] ( 38.33, 85.85) --
	(139.54, 85.85);

\path[draw=drawColor,line width= 0.5pt,line join=round] ( 38.33,110.26) --
	(139.54,110.26);

\path[draw=drawColor,line width= 0.5pt,line join=round] ( 38.33,134.66) --
	(139.54,134.66);

\path[draw=drawColor,line width= 0.5pt,line join=round] ( 59.87, 32.16) --
	( 59.87,139.54);

\path[draw=drawColor,line width= 0.5pt,line join=round] ( 94.78, 32.16) --
	( 94.78,139.54);

\path[draw=drawColor,line width= 0.5pt,line join=round] (129.69, 32.16) --
	(129.69,139.54);
\definecolor{drawColor}{RGB}{69,117,180}

\path[draw=drawColor,line width= 1.4pt,line join=round] ( 42.93, 38.02) --
	( 45.44, 38.02) --
	( 45.44, 39.00) --
	( 46.53, 39.00) --
	( 46.53, 39.97) --
	( 49.42, 39.97) --
	( 49.42, 40.95) --
	( 51.14, 40.95) --
	( 51.14, 41.92) --
	( 51.47, 41.92) --
	( 51.47, 42.90) --
	( 52.65, 42.90) --
	( 52.65, 43.88) --
	( 56.07, 43.88) --
	( 56.07, 44.85) --
	( 57.74, 44.85) --
	( 57.74, 45.83) --
	( 59.95, 45.83) --
	( 59.95, 46.80) --
	( 60.36, 46.80) --
	( 60.36, 47.78) --
	( 62.63, 47.78) --
	( 62.63, 48.76) --
	( 63.64, 48.76) --
	( 63.64, 49.73) --
	( 63.70, 49.73) --
	( 63.70, 50.71) --
	( 64.19, 50.71) --
	( 64.19, 51.69) --
	( 64.44, 51.69) --
	( 64.44, 52.66) --
	( 64.83, 52.66) --
	( 64.83, 53.64) --
	( 66.41, 53.64) --
	( 66.41, 54.61) --
	( 68.68, 54.61) --
	( 68.68, 55.59) --
	( 68.94, 55.59) --
	( 68.94, 56.57) --
	( 70.71, 56.57) --
	( 70.71, 57.54) --
	( 71.53, 57.54) --
	( 71.53, 58.52) --
	( 72.85, 58.52) --
	( 72.85, 59.49) --
	( 73.98, 59.49) --
	( 73.98, 60.47) --
	( 75.89, 60.47) --
	( 75.89, 61.45) --
	( 76.59, 61.45) --
	( 76.59, 62.42) --
	( 76.95, 62.42) --
	( 76.95, 63.40) --
	( 78.01, 63.40) --
	( 78.01, 64.38) --
	( 79.08, 64.38) --
	( 79.08, 65.35) --
	( 79.40, 65.35) --
	( 79.40, 66.33) --
	( 80.17, 66.33) --
	( 80.17, 67.30) --
	( 80.66, 67.30) --
	( 80.66, 68.28) --
	( 84.67, 68.28) --
	( 84.67, 69.26) --
	( 85.12, 69.26) --
	( 85.12, 70.23) --
	( 85.89, 70.23) --
	( 85.89, 71.21) --
	( 86.68, 71.21) --
	( 86.68, 72.18) --
	( 87.00, 72.18) --
	( 87.00, 73.16) --
	( 89.94, 73.16) --
	( 89.94, 74.14) --
	( 90.62, 74.14) --
	( 90.62, 75.11) --
	( 92.05, 75.11) --
	( 92.05, 76.09) --
	( 94.15, 76.09) --
	( 94.15, 77.07) --
	( 94.81, 77.07) --
	( 94.81, 78.04) --
	( 95.02, 78.04) --
	( 95.02, 79.02) --
	( 95.87, 79.02) --
	( 95.87, 79.99) --
	( 96.03, 79.99) --
	( 96.03, 80.97) --
	( 96.12, 80.97) --
	( 96.12, 81.95) --
	( 97.64, 81.95) --
	( 97.64, 82.92) --
	( 97.75, 82.92) --
	( 97.75, 83.90) --
	( 98.67, 83.90) --
	( 98.67, 84.88) --
	( 99.74, 84.88) --
	( 99.74, 85.85) --
	( 99.85, 85.85) --
	( 99.85, 86.83) --
	(100.22, 86.83) --
	(100.22, 87.80) --
	(103.84, 87.80) --
	(103.84, 88.78) --
	(104.02, 88.78) --
	(104.02, 89.76) --
	(105.41, 89.76) --
	(105.41, 90.73) --
	(106.19, 90.73) --
	(106.19, 91.71) --
	(109.15, 91.71) --
	(109.15, 92.68) --
	(109.70, 92.68) --
	(109.70, 93.66) --
	(111.25, 93.66) --
	(111.25, 94.64) --
	(112.42, 94.64) --
	(112.42, 95.61) --
	(113.25, 95.61) --
	(113.25, 96.59) --
	(114.86, 96.59) --
	(114.86, 97.57) --
	(114.88, 97.57) --
	(114.88, 98.54) --
	(115.42, 98.54) --
	(115.42, 99.52) --
	(115.99, 99.52) --
	(115.99,100.49) --
	(116.27,100.49) --
	(116.27,101.47) --
	(116.47,101.47) --
	(116.47,102.45) --
	(116.61,102.45) --
	(116.61,103.42) --
	(121.20,103.42) --
	(121.20,104.40) --
	(121.20,104.40) --
	(121.20,105.37) --
	(122.19,105.37) --
	(122.19,106.35) --
	(126.44,106.35) --
	(126.44,107.33) --
	(126.48,107.33) --
	(126.48,108.30) --
	(127.72,108.30) --
	(127.72,109.28) --
	(128.74,109.28) --
	(128.74,110.26) --
	(128.91,110.26) --
	(128.91,111.23) --
	(129.69,111.23) --
	(129.69,114.16) --
	(132.09,114.16) --
	(132.09,112.21) --
	(132.58,112.21) --
	(132.58,113.18) --
	(134.94,113.18) --
	(134.94,114.16);
\definecolor{drawColor}{RGB}{252,141,89}

\path[draw=drawColor,line width= 1.4pt,dash pattern=on 2pt off 2pt ,line join=round] ( 77.96, 38.02) --
	( 82.11, 38.02) --
	( 82.11, 39.00) --
	( 84.65, 39.00) --
	( 84.65, 39.97) --
	( 85.88, 39.97) --
	( 85.88, 40.95) --
	( 87.56, 40.95) --
	( 87.56, 41.92) --
	( 88.23, 41.92) --
	( 88.23, 42.90) --
	( 89.18, 42.90) --
	( 89.18, 43.88) --
	( 90.91, 43.88) --
	( 90.91, 44.85) --
	( 95.79, 44.85) --
	( 95.79, 45.83) --
	( 96.14, 45.83) --
	( 96.14, 46.80) --
	( 96.91, 46.80) --
	( 96.91, 47.78) --
	( 97.24, 47.78) --
	( 97.24, 48.76) --
	( 98.69, 48.76) --
	( 98.69, 49.73) --
	( 99.50, 49.73) --
	( 99.50, 50.71) --
	(100.80, 50.71) --
	(100.80, 51.69) --
	(101.27, 51.69) --
	(101.27, 52.66) --
	(101.71, 52.66) --
	(101.71, 53.64) --
	(105.53, 53.64) --
	(105.53, 54.61) --
	(105.73, 54.61) --
	(105.73, 55.59) --
	(105.79, 55.59) --
	(105.79, 56.57) --
	(107.26, 56.57) --
	(107.26, 57.54) --
	(107.39, 57.54) --
	(107.39, 58.52) --
	(107.48, 58.52) --
	(107.48, 59.49) --
	(107.59, 59.49) --
	(107.59, 60.47) --
	(107.71, 60.47) --
	(107.71, 61.45) --
	(108.17, 61.45) --
	(108.17, 62.42) --
	(109.51, 62.42) --
	(109.51, 63.40) --
	(110.05, 63.40) --
	(110.05, 64.38) --
	(110.11, 64.38) --
	(110.11, 65.35) --
	(111.20, 65.35) --
	(111.20, 66.33) --
	(111.21, 66.33) --
	(111.21, 67.30) --
	(112.56, 67.30) --
	(112.56, 68.28) --
	(113.36, 68.28) --
	(113.36, 69.26) --
	(115.30, 69.26) --
	(115.30, 70.23) --
	(117.76, 70.23) --
	(117.76, 71.21) --
	(117.84, 71.21) --
	(117.84, 72.18) --
	(117.88, 72.18) --
	(117.88, 73.16) --
	(118.90, 73.16) --
	(118.90, 74.14) --
	(119.88, 74.14) --
	(119.88, 75.11) --
	(120.68, 75.11) --
	(120.68, 76.09) --
	(121.67, 76.09) --
	(121.67, 77.07) --
	(122.27, 77.07) --
	(122.27, 78.04) --
	(123.02, 78.04) --
	(123.02, 79.02) --
	(124.99, 79.02) --
	(124.99, 79.99) --
	(125.78, 79.99) --
	(125.78, 80.97) --
	(126.74, 80.97) --
	(126.74, 81.95) --
	(127.90, 81.95) --
	(127.90, 82.92) --
	(129.69, 82.92) --
	(129.69, 84.88) --
	(129.79, 84.88) --
	(129.79, 83.90) --
	(130.06, 83.90) --
	(130.06, 84.88);
\definecolor{drawColor}{RGB}{215,25,28}

\path[draw=drawColor,line width= 1.4pt,dash pattern=on 4pt off 2pt ,line join=round] ( 87.81, 38.02) --
	(102.06, 38.02) --
	(102.06, 39.00) --
	(112.34, 39.00) --
	(112.34, 39.97) --
	(113.25, 39.97) --
	(113.25, 40.95) --
	(119.27, 40.95) --
	(119.27, 41.92) --
	(129.69, 41.92) --
	(129.69, 41.92);
\definecolor{drawColor}{RGB}{69,117,180}

\node[text=drawColor,anchor=base west,inner sep=0pt, outer sep=0pt, scale=  1.00] at ( 40,120) {79/100};
\definecolor{drawColor}{RGB}{252,141,89}

\node[text=drawColor,anchor=base west,inner sep=0pt, outer sep=0pt, scale=  1.00] at ( 40,110) {49/100};
\definecolor{drawColor}{RGB}{215,25,28}

\node[text=drawColor,anchor=base west,inner sep=0pt, outer sep=0pt, scale=  1.00] at ( 45,100) {5/100};
\end{scope}
\begin{scope}
\path[clip] (  0.00,  0.00) rectangle (144.54,144.54);
\definecolor{drawColor}{gray}{0.30}

\node[text=drawColor,anchor=base east,inner sep=0pt, outer sep=0pt, scale=  1.00] at ( 33.83, 33.60) {0};

\node[text=drawColor,anchor=base east,inner sep=0pt, outer sep=0pt, scale=  1.00] at ( 33.83, 58.00) {25};

\node[text=drawColor,anchor=base east,inner sep=0pt, outer sep=0pt, scale=  1.00] at ( 33.83, 82.41) {50};

\node[text=drawColor,anchor=base east,inner sep=0pt, outer sep=0pt, scale=  1.00] at ( 33.83,106.81) {75};

\node[text=drawColor,anchor=base east,inner sep=0pt, outer sep=0pt, scale=  1.00] at ( 33.83,131.22) {100};
\end{scope}
\begin{scope}
\path[clip] (  0.00,  0.00) rectangle (144.54,144.54);
\definecolor{drawColor}{gray}{0.30}

\node[text=drawColor,anchor=base,inner sep=0pt, outer sep=0pt, scale=  1.00] at ( 59.87, 20.78) {$10^3$};

\node[text=drawColor,anchor=base,inner sep=0pt, outer sep=0pt, scale=  1.00] at ( 94.78, 20.78) {$10^5$};

\node[text=drawColor,anchor=base,inner sep=0pt, outer sep=0pt, scale=  1.00] at (129.69, 20.78) {$10^7$};
\end{scope}
\begin{scope}
\path[clip] (  0.00,  0.00) rectangle (144.54,144.54);
\definecolor{drawColor}{RGB}{0,0,0}

\node[text=drawColor,anchor=base,inner sep=0pt, outer sep=0pt, scale=  1.00] at ( 88.93,  6.94) {Branch-and-bound nodes};
\end{scope}
\begin{scope}
\path[clip] (  0.00,  0.00) rectangle (144.54,144.54);
\definecolor{drawColor}{RGB}{0,0,0}

\end{scope}
\end{tikzpicture}}%
  }\caption{ECDF of BnB nodes for the shortest-path problem (left) and the
  knapsack problem (right) with discrete budgeted uncertainty sets (top) and
  discrete knapsack uncertainty sets (bottom). The numbers in the legend
  indicate the number of instances solved within the time limit of 2 hours out
  of 100 instances. Solid \blue{blue}: \blue{\textsf{MibS}}. Dotted
  \orange{orange}: \orange{\textsf{qip-existeval}}. Dashed \red{red}:
  \red{\textsf{qip-bilevel}}.}
  \label{fig:num-results:discrete:ecdf-nodes}
\end{figure}
}

\begin{figure}
  \centering
    \scalebox{1.0}{\input{plots/optl_rev/sp_cont_budg_scatter.tex}}
    \scalebox{1.0}{\input{plots/optl_rev/knapsack_cont_budg_scatter.tex}}
    \scalebox{1.0}{\input{plots/optl_rev/portfolio_cont_budg_scatter.tex}}
    \caption{Scatter plots for the shortest-path problem (top), the knapsack
    problem (center), and the portfolio selection problem (bottom) with
    continuous budgeted uncertainty sets. The numbers in the legend indicate the
    number of instances solved within the time limit of 2 hours out of 10
    instances per size. \blue{Blue} dots: \blue{bilevel approach}.
    \orange{Orange} diamonds: \orange{robust approach}.}
  \label{fig:num-results:cont:budgeted:scatter}
\end{figure}
\begin{figure}
  \centering
    \scalebox{1.0}{\input{plots/sp_cont_knap_scatter.tex}}
    \scalebox{1.0}{\input{plots/knapsack_cont_knap_scatter.tex}}
    \scalebox{1.0}{\input{plots/portfolio_cont_knap_scatter.tex}}
    \caption{Scatter plots for the shortest-path problem (top), the knapsack
    problem (center), and the portfolio selection problem (bottom) with
    continuous knapsack uncertainty sets. The numbers in the legend indicate the
    number of instances solved within the time limit of 2 hours out of 10
    instances per size. Solid \blue{Blue} dots: \blue{bilevel approach}.
    \orange{Orange} diamonds: \orange{robust approach}.}
  \label{fig:num-results:cont:knapsack:scatter}
\end{figure}
\begin{figure}
  \centering
    \scalebox{1.0}{\input{plots/optl_rev/sp_disc_budg_scatter.tex}}
    \scalebox{1.0}{\input{plots/optl_rev/knapsack_disc_budg_scatter.tex}}
    \caption{Scatter plots for the shortest-path problem (top) and the knapsack
    problem (bottom) with discrete budgeted uncertainty sets. The numbers in the
    legend indicate the number of instances solved within the time limit of 2
    hours out of 10 instances per size. \blue{Blue} dots: \blue{\textsf{MibS}}.
    \orange{Orange} diamonds: \orange{\textsf{qip-existeval}}. \red{Red}
    triangles: \red{\textsf{qip-bilevel}}.}
  \label{fig:num-results:discrete:budgeted:scatter}
\end{figure}
\begin{figure}
  \centering
    \scalebox{1.0}{\input{plots/optl_rev/sp_disc_knap_scatter.tex}}
    \scalebox{1.0}{\input{plots/knapsack_disc_knap_scatter.tex}}
    \caption{Scatter plots for the shortest path problem (top) and the knapsack
    problem (bottom) with discrete knapsack uncertainty sets. The numbers in the
    legend indicate the number of instances solved within the time limit of 2
    hours out of 10 instances per size. \blue{Blue} dots: \blue{\textsf{MibS}}.
    \orange{Orange} diamonds: \orange{\textsf{qip-existeval}}. \red{Red}
    triangles: \red{\textsf{qip-bilevel}}.}
  \label{fig:num-results:discrete:knapsack:scatter}
\end{figure}

\FloatBarrier

\end{document}